\date{31 August 2012}
\title{On the Homology of Completion and Torsion}
\author{Marco Porta, Liran Shaul and Amnon Yekutieli}
\address{Department of  Mathematics,
Ben Gurion University, Be'er Sheva 84105, Israel}
\address{ {\em E-mail address}:  (Porta) {\tt marcoporta1@libero.it},
(Shaul) {\tt shlir@math.bgu.ac.il},
(Yekutieli) {\tt amyekut@math.bgu.ac.il} } 
\thanks{{\em Mathematics Subject Classification} 2010.
Primary: 13D07; Secondary: 13B35, 13C12, 13D09, 18E30.}
\keywords{Adic completion, torsion, derived functors.}
\thanks{This research was supported by the Israel Science Foundation and the 
Center for Advanced Studies at BGU}
\newtheorem{thm}[equation]{Theorem}
\newtheorem{cor}[equation]{Corollary}
\newtheorem{prop}[equation]{Proposition}
\newtheorem{lem}[equation]{Lemma}
\theoremstyle{definition}
\newtheorem{dfn}[equation]{Definition}
\newtheorem{rem}[equation]{Remark}
\newtheorem{exa}[equation]{Example}
\numberwithin{equation}{section}
\newcommand{\iso}{\xrightarrow{\simeq}}
\newcommand{\inj}{\hookrightarrow}
\newcommand{\xar}{\xrightarrow}
\newcommand{\opn}{\operatorname}
\newcommand{\cat}[1]{\operatorname{\mathsf{#1}}}
\newcommand{\rmitem}[1]{\item[\text{\textup{(#1)}}]}
\newcommand{\mfrak}[1]{\mathfrak{#1}}
\newcommand{\mcal}[1]{\mathcal{#1}}
\newcommand{\mrm}[1]{\mathrm{#1}}
\newcommand{\mbb}[1]{\mathbb{#1}}
\newcommand{\tup}[1]{\textup{#1}}
\newcommand{\bsym}[1]{\boldsymbol{#1}}
\newcommand{\til}[1]{\tilde{#1}}
\newcommand{\what}[1]{\widehat{#1}}
\newcommand{\K}{\mbb{K}}
\newcommand{\N}{\mbb{N}}
\newcommand{\Z}{\mbb{Z}}
\newcommand{\La}{\Lambda}
\newcommand{\Ga}{\Gamma}
\renewcommand{\a}{\mfrak{a}}
\renewcommand{\b}{\mfrak{b}}
\renewcommand{\c}{\mfrak{c}}
\newcommand{\p}{\mfrak{p}}
\newcommand{\al}{\alpha}
\newcommand{\be}{\beta}
\renewcommand{\d}{\mrm{d}}
\newcommand{\ot}{\otimes}
\newcommand{\lb}{\linebreak}
\newcommand{\distri}{\xar{ \vartriangle }}
\renewcommand{\ot}{\otimes}
\begin{document}

\begin{abstract}
Let $A$ be a commutative ring, and $\a$ a {\em weakly proregular}
ideal in $A$. This includes the noetherian case: if $A$ is noetherian then any
ideal in it is weakly proregular; but there are other interesting examples. In
this paper we prove the {\em MGM equivalence}, which is an
equivalence between the category of {\em cohomologically $\a$-adically complete
complexes} and the category of {\em cohomologically $\a$-torsion complexes}.
These are triangulated subcategories of the derived category of $A$-modules. Our
work extends earlier work by Alonso-Jeremias-Lipman, Schenzel and
Dwyer-Greenlees.
\end{abstract}

\maketitle
\tableofcontents

\setcounter{section}{-1}
\section{Introduction}

Let $A$ be a commutative ring, and let $\a$ be an ideal in it. 
(We do not assume that $A$ is noetherian or $\a$-adically complete.)
There are two operations associated to this data: the {\em $\a$-adic
completion} and the {\em $\a$-torsion}. 
For an $A$-module $M$ its $\a$-adic completion is the $A$-module
\[ \Lambda_{\a} (M) = \what{M} := \lim_{\leftarrow i} \,
M / \a^i M . \]
An element $m \in M$ is called an $\a$-torsion element if
 $\a^i m = 0$ for $i \gg 0$. The $\a$-torsion elements form 
the $\a$-torsion submodule $\Gamma_{\a} (M)$ of $M$.

Let us denote by $\cat{Mod} A$ the category of $A$-modules. So we have additive
functors 
\[ \Lambda_{\a}, \Gamma_{\a} : \cat{Mod} A \to \cat{Mod} A . \]
The functor $\Gamma_{\a}$ is left exact; whereas $\Lambda_{\a}$ is neither left
exact nor right exact. (Of course when $A$ is noetherian, the completion functor
$\Lambda_{\a}$ is exact on the subcategory $\cat{Mod}_{\mrm{f}} A$ of finitely
generated modules.) In this paper we study several questions of homological
nature about these two functors. 

The derived category of $\cat{Mod} A$ is denoted by $\cat{D}(\cat{Mod} A)$. 
As explained in Section \ref{sec:prelim}, the derived functors
\[ \mrm{L} \Lambda_{\a}, \mrm{R} \Gamma_{\a} : 
\cat{D}(\cat{Mod} A) \to \cat{D}(\cat{Mod} A) \]
exist. The left derived functor $\mrm{L} \Lambda_{\a}$ is constructed using
K-projective resolutions, and the right derived functor $\mrm{R} \Gamma_{\a}$
is constructed using K-injective resolutions.

The functor $\mrm{R} \Gamma_{\a}$ has been studied in great length already in
the 1950's, by Grothendieck and others (in the context of local cohomology). 

The left derived functors $\mrm{L}^i \Lambda_{\a}$ were studied by 
Matlis \cite{Ma2} and Greenlees-May \cite{GM}. 
The first treatment of the total left derived functor $\mrm{L} \Lambda_{\a}$ was
in the paper \cite{AJL1} by Alonso-Jeremias-Lipman from 1997.
In this paper the authors established the {\em Greenlees-May Duality}, which we
find deep and remarkable. 
The setting in \cite{AJL1} is geometric: the completion of a non-noetherian
scheme along a proregularly embedded closed subset. 
However, certain aspects of the theory remained unclear (see Remarks
\ref{rem:280} and \ref{rem:historical}). One of our aims in
this paper is to clarify the foundations of the theory in the algebraic
setting. We also extend the scope of the existing results. 

Two other, much more recent papers also influenced our work. In the paper
\cite{KS2} of Kashiwara-Schapira there is a part devoted to what they call
{\em cohomologically complete complexes}. We wondered what might be the relation
between this notion and the derived completion functor $\mrm{L} \Lambda_{\a}$. 
The answer we discovered is Theorem \ref{thm:50} below. 

The paper \cite{Ef} by Efimov describes an operation of {\em completion  by
derived double centralizer}. This idea is attributed to Kontsevich. A similar
results was obtained in \cite{DGI}. Our interpretation of this completion
operation is in the companion paper \cite{PSY1}, and it relies on the work in
this paper.

Let us turn to the results in our paper. 
We work in the following context: $A$ is a commutative ring, and $\a$ is a
{\em weakly proregular ideal} in it. By definition an ideal is weakly
proregular if it can be generated by a {\em weakly proregular sequence}
$\bsym{a} = (a_1, \ldots, a_n)$ of elements of $A$. The definition of
proregularity for sequences
is a bit technical (see Definition \ref{dfn:250}). 

It is important to know that if $\a$ is a weakly proregular ideal in $A$,
then any finite sequence that generates $\a$ is weakly proregular.
If $A$ is noetherian then every finite sequence in $A$ is
weakly proregular, so that every ideal in $A$ is weakly proregular.
These results were already proved in \cite{AJL1} and \cite{Sc}. 
We provide short proofs for the benefit of the reader (see
Corollary \ref{cor:290} and Theorem \ref{thm:253} in the body of the paper). 
We also give a fairly natural example of a weakly proregular sequence in a
non-noetherian ring (Example \ref{exa:251}).

A complex $M \in \cat{D}(\cat{Mod} A)$ is called a {\em cohomologically
$\a$-torsion complex} if the canonical morphism 
$\mrm{R} \Gamma_{\a} (M) \to M$
is an isomorphism.
The complex $M$  is called a {\em cohomologically $\a$-adically complete
complex} if the canonical morphism 
$M \to \mrm{L} \Lambda_{\a} (M)$
is an isomorphism.
We denote by $\cat{D}(\cat{Mod} A)_{\a \tup{-tor}}$ and 
$\cat{D}(\cat{Mod} A)_{\a \tup{-com}}$
the full subcategories of $\cat{D}(\cat{Mod} A)$ consisting of
cohomologically $\a$-torsion complexes and cohomologically $\a$-adically
complete complexes, respectively. These are triangulated subcategories.

Here is the main result of our paper. 

\begin{thm}[MGM Equivalence] \label{thm:49}
Let $A$ be a commutative ring, and $\a$ a weakly proregular ideal in it.
\begin{enumerate}
\item For any $M \in \cat{D}(\cat{Mod} A)$ one has
$\mrm{R} \Gamma_{\a} (M) \in \cat{D}(\cat{Mod} A)_{\a \tup{-tor}}$
and 
$\mrm{L} \Lambda_{\a} (M) \in \cat{D}(\cat{Mod} A)_{\a \tup{-com}}$.

\item The functor 
\[ \mrm{R} \Gamma_{\a} : 
\cat{D}(\cat{Mod} A)_{\a \tup{-com}} \to 
\cat{D}(\cat{Mod} A)_{\a \tup{-tor}} \]
is an equivalence, with quasi-inverse $\mrm{L} \Lambda_{\a}$. 
\end{enumerate}
\end{thm}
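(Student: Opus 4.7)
The plan is to reduce the theorem to four ``intertwining'' identities of derived functors, and then establish these via the explicit Koszul-type models afforded by weak proregularity. The four identities are
\begin{align*}
\mrm{R}\Ga_{\a}\,\mrm{R}\Ga_{\a}&\iso\mrm{R}\Ga_{\a}, & \mrm{L}\La_{\a}\,\mrm{L}\La_{\a}&\iso\mrm{L}\La_{\a}, \\
\mrm{R}\Ga_{\a}\,\mrm{L}\La_{\a}&\iso\mrm{R}\Ga_{\a}, & \mrm{L}\La_{\a}\,\mrm{R}\Ga_{\a}&\iso\mrm{L}\La_{\a},
\end{align*}
each arising from the canonical transformations $\mrm{R}\Ga_{\a}\to\mrm{id}$ and $\mrm{id}\to\mrm{L}\La_{\a}$. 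The first two are exactly part~(1) of the theorem, reformulated. For part~(2): if $M\in\cat{D}(\cat{Mod} A)_{\a\tup{-tor}}$ then $M\simeq\mrm{R}\Ga_{\a}M$, so the third identity yields $\mrm{R}\Ga_{\a}\mrm{L}\La_{\a}M\simeq\mrm{R}\Ga_{\a}M\simeq M$; dually, the fourth gives $\mrm{L}\La_{\a}\mrm{R}\Ga_{\a}N\simeq N$ for every cohomologically $\a$-adically complete $N$. Therefore $\mrm{R}\Ga_{\a}$ and $\mrm{L}\La_{\a}$ restrict to mutually quasi-inverse equivalences between the two subcategories.

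To prove the four identities I would pick a weakly proregular generating sequence $\bsym{a}=(a_{1},\ldots,a_{n})$ for $\a$ (which exists by the second theorem quoted in the introduction) and introduce the infinite (stable) Koszul complex $K^{\vee}_{\infty}(\bsym{a})$, a bounded complex of countably generated free $A$-modules. Using weak proregularity, one establishes the two model formulas
\[ \mrm{R}\Ga_{\a}(M)\simeq K^{\vee}_{\infty}(\bsym{a})\otimes^{\mrm{L}}_{A}M, \qquad \mrm{L}\La_{\a}(M)\simeq\mrm{R}\mrm{Hom}_{A}\bigl(K^{\vee}_{\infty}(\bsym{a}),M\bigr). \]
With these in hand, the four identities reduce to the claim that the cones of the canonical maps $\mrm{R}\Ga_{\a}M\to M$ and of $M\to\mrm{L}\La_{\a}M$ are annihilated both by $K^{\vee}_{\infty}(\bsym{a})\otimes^{\mrm{L}}_{A}-$ and by $\mrm{R}\mrm{Hom}_{A}(K^{\vee}_{\infty}(\bsym{a}),-)$. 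Using the decomposition $K^{\vee}_{\infty}(\bsym{a})\simeq\bigotimes_{i}K^{\vee}_{\infty}(a_{i})$, this reduces further to a single element $a$, where the vanishing becomes the assertion that both derived functors are zero on complexes on which $a$ acts invertibly.

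The main obstacle will be the second model formula, $\mrm{L}\La_{\a}(M)\simeq\mrm{R}\mrm{Hom}_{A}(K^{\vee}_{\infty}(\bsym{a}),M)$. The torsion formula is essentially a colimit calculation, since $\Ga_{\a}$ commutes with directed colimits; but for completion one must interchange a derived functor with an \emph{inverse} limit, and this is controlled precisely by the $\lim^{1}$-vanishing hypothesis encoded in the definition of weak proregularity. Concretely, for a K-flat resolution $P\to M$, one has to compare the inverse system $\{P/\a^{i}P\}$ with the Koszul systems $\{P\otimes_{A} K(\bsym{a}^{i})\}$ and show that their derived inverse limits coincide; this is exactly what weak proregularity delivers. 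A secondary but non-trivial check is compatibility: the isomorphisms produced from the Koszul models must be identified with those induced by the canonical natural transformations, which is verified by tracing through the unit and counit of the derived tensor--Hom adjunction.
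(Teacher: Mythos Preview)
Your overall strategy matches the paper's exactly: reduce to the four idempotence/intertwining identities, and prove those via explicit Koszul-type models for $\mrm{R}\Gamma_{\a}$ and $\mrm{L}\Lambda_{\a}$. The torsion model $\mrm{R}\Gamma_{\a}(M)\simeq K^{\vee}_{\infty}(\bsym{a})\otimes_A M$ is the paper's Corollary~\ref{cor:kosz.1}, and the reduction of the four identities to ``vanishing on complexes where some $a_i$ acts invertibly'' is essentially how the paper proceeds (Lemmas~\ref{lem:260}, \ref{lem:4.1}, \ref{lem:263}, \ref{lem:264}, \ref{lem:265}).

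There is a genuine gap at precisely the point you flag as the main obstacle. First, a factual slip: $K^{\vee}_{\infty}(\bsym{a})$ is a bounded complex of \emph{flat} $A$-modules (products of localizations $A[(a_{i_0}\cdots a_{i_p})^{-1}]$), not of free ones. This matters: $K^{\vee}_{\infty}(\bsym{a})$ is K-flat but \emph{not} K-projective, so you cannot read off $\mrm{RHom}_A(K^{\vee}_{\infty}(\bsym{a}),M)$ from the plain $\opn{Hom}$. Your proposed computation---compare the inverse systems $\{P/\a^{i}P\}$ and $\{P\otimes_A \opn{K}(A;\bsym{a}^i)\}$ via their derived limits---runs into the difficulty the paper singles out in Remark~\ref{rem:251}: the Koszul inverse system has non-surjective transition maps (multiplication by powers of the $a_i$ in negative degrees), so its ordinary inverse limit is \emph{not} the right object, and you have not explained how to identify its \emph{derived} limit with $\mrm{RHom}_A(K^{\vee}_{\infty}(\bsym{a}),P)$.

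The paper fills this gap with the telescope complex $\opn{Tel}(A;\bsym{a})$ (Definition~\ref{dfn:5}): a bounded complex of countable-rank \emph{free} modules, hence K-projective, together with a quasi-isomorphism $w_{\bsym{a}}:\opn{Tel}(A;\bsym{a})\to K^{\vee}_{\infty}(A;\bsym{a})$ (Lemma~\ref{lem:201}) that is explicitly \emph{not} a homotopy equivalence. One then has $\mrm{RHom}_A(K^{\vee}_{\infty}(\bsym{a}),M)\cong\opn{Hom}_A(\opn{Tel}(A;\bsym{a}),M)$, and the filtration $\opn{Tel}=\bigcup_j\opn{Tel}_j$ gives an inverse system $\{\opn{Tel}_j^{\vee}(A;\bsym{a})\otimes_A P\}_j$ with \emph{surjective} transitions. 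Now Mittag--Leffler applies at the level of complexes, and weak proregularity (pro-zero Koszul cohomology) gives Mittag--Leffler on cohomology; together these yield Theorem~\ref{thm:250} and hence your second model formula (Corollary~\ref{cor:tel.1}). In short: your description ``bounded complex of countably generated free $A$-modules'' is the telescope complex, not $K^{\vee}_{\infty}$, and introducing it is exactly the missing step.
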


This is repeated as Theorem \ref{thm:26} in the body of the paper.
The letters ``MGM'' stand for Matlis, Greenlees and May.

Similar results can be found in \cite{AJL1, Sc, DG}, and possibly
some weaker version of Theorem \ref{thm:49} can be deduced from these
results. But as far as we can tell,
Theorem \ref{thm:49} is new. See Remarks  \ref{rem:280} and \ref{rem:historical}
for a discussion. The main ingredient in the proof of
the MGM equivalence is Theorem \ref{thm:270} below. 

Given a finite sequence $\bsym{a}$ that generates $\a$, we construct
explicitly a complex 
$\opn{Tel}(A; \bsym{a})$, called the {\em telescope complex}. It is a bounded
complex of countable rank free $A$-modules. There is a functorial homomorphism
of complexes (also with explicit formula) 
\[ \opn{tel}_{\bsym{a}, M} : 
\opn{Hom}_A \bigl( \opn{Tel}(A; \bsym{a}), M \bigr) \to \Lambda_{\a} (M) \]
for any $M \in \cat{Mod} A$. By totalization we get a homomorphism 
$\opn{tel}_{\bsym{a}, M}$ for any 
$M \in \cat{C}(\cat{Mod} A)$.
See Definitions \ref {dfn:5} and \ref{dfn:280}. 

\begin{thm} \label{thm:270}
Let $A$ be a commutative ring, let $\bsym{a}$ be a weakly proregular sequence
in $A$, and let $\a$ be the ideal generated by $\bsym{a}$.
If $P$ is a K-flat complex of $A$-modules, then the homomorphism
\[ \opn{tel}_{\bsym{a}, P} : 
\opn{Hom}_A \bigl( \opn{Tel}(A; \bsym{a}), P \bigr) \to \Lambda_{\a} (P) \]
is a quasi-isomorphism.
\end{thm}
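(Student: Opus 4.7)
The plan is to induct on the length $n$ of the sequence $\bsym{a}$, exploiting the multiplicative structure
\[
\opn{Tel}(A; \bsym{a}) = \opn{Tel}(A; a_1) \otimes_A \cdots \otimes_A \opn{Tel}(A; a_n)
\]
together with Hom-tensor adjunction. The key structural feature of the telescope I rely on throughout is that it is a bounded complex of countable-rank free $A$-modules, hence K-projective; consequently $\opn{Hom}_A(\opn{Tel}(A; \bsym{a}), -)$ is exact and preserves quasi-isomorphisms.

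\textbf{Base case $n = 1$.} Here $\opn{Tel}(A; a)$ is an explicit two-term complex of free modules whose differential encodes the system $A \xrightarrow{a} A \xrightarrow{a} \cdots$. Applying $\opn{Hom}_A(-, P)$ yields a two-term complex $\sprod_i P \to \sprod_i P$. For a single flat module $P$ in degree zero I would compute directly that $H^0$ equals $\Lambda_{(a)}(P)$ while $H^1 = 0$; the latter is the classical vanishing $\mrm{L}^1 \Lambda_{(a)}(F) = 0$ for flat $F$ (and a single element is automatically proregular). Passing to a general K-flat complex $P$ is a bicomplex/spectral-sequence argument: the totalization of the resulting double complex has an $E_2$-page given by $\lim^{0,1}$ terms on cohomology modulo powers of $a$, and K-flatness forces the $\lim^1$ terms to vanish and identifies the $\lim^0$ terms with $H^{*}\Lambda_{(a)}(P)$, compatibly with $\opn{tel}_{(a), P}$.

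\textbf{Inductive step.} Assuming the result for sequences of length $< n$, Hom-tensor adjunction gives
\[
\opn{Hom}_A\bigl( \opn{Tel}(A; \bsym{a}), P \bigr) \;\cong\; \opn{Hom}_A\Bigl( \opn{Tel}(A; a_n),\, \opn{Hom}_A\bigl(\opn{Tel}(A; (a_1, \ldots, a_{n-1})), P\bigr) \Bigr).
\]
By the inductive hypothesis the inner Hom is quasi-isomorphic to $\Lambda_{(a_1, \ldots, a_{n-1})}(P)$, and K-projectivity of $\opn{Tel}(A; a_n)$ propagates this quasi-isomorphism through the outer Hom. A second application of the base case then yields $\Lambda_{(a_n)} \Lambda_{(a_1, \ldots, a_{n-1})}(P)$, which I identify with $\Lambda_{\a}(P)$ via the classical fact that iterated completion along the generators of an ideal equals the ideal-adic completion.

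\textbf{Main obstacle.} The second application of the base case requires the intermediate complex $\Lambda_{(a_1, \ldots, a_{n-1})}(P)$ to be K-flat, which is \emph{not} automatic from K-flatness of $P$. I expect this to be precisely where the weak proregularity hypothesis is indispensable: either via a direct argument that the intermediate completion retains enough flatness for the spectral sequence of the base case to collapse, or via an alternative route that replaces the induction by a single comparison of $\opn{Tel}(A; \bsym{a})$ with the stable Koszul complex $\botimes_{i}[A \to A[a_i^{-1}]]$ (to which the telescope is a K-flat resolution under weak proregularity), followed by a spectral-sequence computation of $\opn{Hom}_A$ of the Koszul complex into $P$. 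A secondary concern is that the induction tacitly assumes the subsequence $(a_1, \ldots, a_{n-1})$ remains weakly proregular, which may itself require a preliminary reduction.
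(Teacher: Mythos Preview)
Your induction-on-$n$ strategy has a genuine gap that you yourself correctly identify but do not resolve. Two independent problems block the inductive step:
\begin{itemize}
\item Weak proregularity of $(a_1,\ldots,a_n)$ does \emph{not} in general imply weak proregularity of the subsequence $(a_1,\ldots,a_{n-1})$, so you cannot invoke the induction hypothesis.
\item Even granting the hypothesis, the intermediate complex $\Lambda_{(a_1,\ldots,a_{n-1})}(P)$ is typically not K-flat (completion destroys flatness outside the noetherian finitely-generated setting), so your second application of the base case is not justified.
\end{itemize}
Your suggested fallback---comparing the telescope to the infinite dual Koszul complex---does not help here, because $\opn{K}^{\vee}_{\infty}(A;\bsym{a})$ is K-flat but not K-projective, so $\opn{Hom}_A(\opn{K}^{\vee}_{\infty}(A;\bsym{a}),-)$ does not preserve quasi-isomorphisms; that is precisely why the telescope is introduced. (Also, your parenthetical ``a single element is automatically proregular'' is false in general non-noetherian rings, though in the theorem the hypothesis supplies it for $n=1$.)

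The paper avoids both obstructions by not inducting on $n$ at all. It first reduces to the case where $P$ is a single flat module (via a way-out argument using the bounded amplitude of the telescope), and then argues directly with the inverse-limit presentation
\[
\opn{Hom}_A\bigl(\opn{Tel}(A;\bsym{a}),P\bigr)\;\cong\;\lim_{\leftarrow j}\,\bigl(\opn{Tel}_j^{\vee}(A;\bsym{a})\otimes_A P\bigr).
\]
Each finite piece $\opn{Tel}_j^{\vee}(A;\bsym{a})$ is homotopy equivalent to the Koszul complex $\opn{K}(A;\bsym{a}^j)$, so flatness of $P$ gives $\mrm{H}^k(\opn{Tel}_j^{\vee}\otimes P)\cong \mrm{H}^k(\opn{K}(A;\bsym{a}^j))\otimes_A P$. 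Weak proregularity of the \emph{full} sequence says exactly that these inverse systems are pro-zero for $k<0$; together with the surjective transition maps on the complexes themselves, this yields the Mittag-Leffler condition needed to commute $\lim$ with cohomology and to kill the negative-degree cohomology of the limit. The point is that weak proregularity is used once, globally, on the whole Koszul system, rather than being peeled off one generator at a time.
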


This is Corollary \ref{cor:294} in the body of the paper. 
The concept of telescope complex is not new of course, but our treatment 
appears to be quite different from anything we saw in the literature. 

Along the way we also prove that the functors $\mrm{R} \Gamma_{\a}$ and 
$\mrm{L} \Lambda_{\a}$ have finite cohomological dimensions. 
(An upper bound is the minimal length of a sequence 
that generates the ideal $\a$.) This implies that 
\begin{equation} \label{eqn:50}
\cat{D}(\cat{Mod} A)_{\a \tup{-tor}} = 
\cat{D}_{\a \tup{-tor}}(\cat{Mod} A) ,
\end{equation}
the latter being the subcategory of $\cat{D}(\cat{Mod} A)$ consisting of
complexes with $\a$-torsion cohomology modules (see Corollary \ref{cor:17}).
Note that such a statement for $\cat{D}(\cat{Mod} A)_{\a \tup{-com}}$
is false: in Example \ref{exa:1} we exhibit a cohomologically $\a$-adically
complete complex $P$ such that $\mrm{H}^i (P) = 0$ for all $i \neq 0$, and the
module $\mrm{H}^0 (P)$ is {\em not} $\a$-adically complete. 

Let $\bsym{a} = (a_1, \ldots, a_n)$ be a generating sequence for the ideal 
$\a$. In Section \ref{sec:der-loc} we construct a
noncommutative DG $A$-algebra $\opn{C}(A; \bsym{a})$, which we call the {\em
derived localization of $A$ with respect to $\bsym{a}$}. 
When $n = 1$ (we refer to this as the principal case, since the ideal
$\a$ is principal) then 
$\opn{C}(A; \bsym{a}) = A[a_1^{-1}]$, the usual localization. For $n > 1$ the
construction uses the \v{C}ech cosimplicial algebra and the Alexander-Whitney
multiplication.

\begin{thm} \label{thm:50}
Let $A$ be a commutative ring, $\bsym{a}$ a weakly proregular sequence in $A$,
and $\a$ the ideal generated by $\bsym{a}$.
The following conditions are equivalent for 
$M \in \cat{D}(\cat{Mod} A)$\tup{:}
\begin{enumerate}
\rmitem{i} $M$ is  cohomologically $\a$-adically complete.
\rmitem{ii} $\opn{RHom}_A \bigl( \opn{C}(A; \bsym{a}) , M \bigr) = 0$.
\end{enumerate}
\end{thm}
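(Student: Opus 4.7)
The plan is to construct a single distinguished triangle that simultaneously controls both conditions. From the construction of $\opn{C}(A;\bsym{a})$ via the \v{C}ech cosimplicial algebra in Section \ref{sec:der-loc}, one expects a canonical distinguished triangle
\begin{equation*}
\mrm{R}\Ga_{\a}(A) \to A \to \opn{C}(A;\bsym{a}) \distri
\end{equation*}
in $\cat{D}(\cat{Mod} A)$, where the second arrow is the canonical ``derived localization'' map, and the fiber is identified with $\mrm{R}\Ga_{\a}(A)$ using weak proregularity (via the telescope complex $\opn{Tel}(A;\bsym{a})$, which represents $\mrm{R}\Ga_{\a}(A)$ in the derived category). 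In the principal case $n = 1$ this reduces to the familiar triangle $\mrm{R}\Ga_{a}(A) \to A \to A[a^{-1}]$; for general $n$ the Alexander--Whitney product on the \v{C}ech cosimplicial algebra together with weak proregularity provides the required comparison.

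Applying the contravariant functor $\opn{RHom}_A(-, M)$ to this triangle yields
\begin{equation*}
\opn{RHom}_A\bigl(\opn{C}(A;\bsym{a}), M\bigr) \to M \to \opn{RHom}_A\bigl(\mrm{R}\Ga_{\a}(A), M\bigr) \distri.
\end{equation*}
The third term can be rewritten using Theorem \ref{thm:270}: because $\opn{Tel}(A;\bsym{a})$ is a bounded complex of free $A$-modules (hence both K-projective and K-flat) and represents $\mrm{R}\Ga_{\a}(A)$ in the derived category, one computes, for any K-flat resolution $P \to M$,
\begin{equation*}
\opn{RHom}_A\bigl(\mrm{R}\Ga_{\a}(A), M\bigr) \simeq \opn{Hom}_A\bigl(\opn{Tel}(A;\bsym{a}), P\bigr) \simeq \La_{\a}(P) = \mrm{L}\La_{\a}(M),
\end{equation*}
the middle quasi-isomorphism being precisely $\opn{tel}_{\bsym{a}, P}$ from Theorem \ref{thm:270}. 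A direct inspection of the telescope map versus the canonical completion map shows that under this identification the morphism $M \to \opn{RHom}_A(\mrm{R}\Ga_{\a}(A), M)$ coincides with the standard morphism $M \to \mrm{L}\La_{\a}(M)$.

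Putting these steps together, the original triangle becomes
\begin{equation*}
\opn{RHom}_A\bigl(\opn{C}(A;\bsym{a}), M\bigr) \to M \to \mrm{L}\La_{\a}(M) \distri,
\end{equation*}
so condition (ii) --- vanishing of the left term --- is equivalent to the map $M \to \mrm{L}\La_{\a}(M)$ being an isomorphism, which is precisely condition (i). The main obstacle will be the naturality/comparison step: one must verify that the explicit maps produced by the \v{C}ech and telescope constructions are compatible with the abstract canonical morphisms in $\cat{D}(\cat{Mod} A)$, so that the final triangle really is the ``completion fiber sequence'' for $M$. This comparison is exactly where weak proregularity (through Theorem \ref{thm:270}) is essential.
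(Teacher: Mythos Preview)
Your proposal is correct and follows essentially the same route as the paper: build the triangle $\mrm{R}\Gamma_{\a}(A) \to A \to \opn{C}(A;\bsym{a}) \distri$ (this is Lemma~\ref{lem:derloc.1}(1) combined with Corollary~\ref{cor:kosz.1}), apply $\opn{RHom}_A(-, M)$, and identify the third vertex with $\mrm{L}\Lambda_{\a}(M)$ via the telescope complex so that the connecting map becomes $\tau^{\mrm{L}}_M$. The paper organizes this slightly differently---it passes through an auxiliary functor $\mrm{R}\Gamma_{0/\a}$ (Lemmas~\ref{lem:267} and~\ref{lem:270}) and packages the identification $\opn{RHom}_A(\mrm{R}\Gamma_{\a}(A), M) \cong \mrm{L}\Lambda_{\a}(M)$, compatible with $\tau^{\mrm{L}}_M$, as Corollary~\ref{cor:280}---but the content is the same as your telescope computation.
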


This is Theorem \ref{thm:263} in the body of the paper. 
The principal regular case ($n= 1$ and $a_1$ a non-zero-divisor)
was considered by Kashiwara-Schapira in \cite{KS2}. Indeed, in \cite{KS2}
condition (ii) was used as the definition of cohomologically complete
complexes. After hearing about the results of \cite{KS2}, we wondered whether
they hold in greater generality (for $n > 1$ and no regularity assumption on
the sequence $\bsym{a}$). More in this direction can be found in the companion
paper \cite{PSY2}.

\medskip \noindent
\textbf{Acknowledgments.}
We wish to thank Bernhard Keller, John Greenlees, Joseph Lipman, Ana Jeremias,
Leo Alonso and Peter Schenzel for helpful discussions.
Thanks also to the referee for his/her comments on improving the paper.

\section{Preliminaries on Homological Algebra}
\label{sec:prelim}

This paper relies on delicate work with derived functors. Therefore we begin
with a review of some facts on homological algebra. There are also a few
new results. By default all rings considered in the paper are commutative. 

Let $\cat{M}$ be an abelian category. 
As in \cite{RD} we denote by $\cat{C}(\cat{M})$ the category of complexes of
objects of $\cat{M}$, by $\cat{K}(\cat{M})$ its homotopy category, 
and by $\cat{D}(\cat{M})$ the derived category.
There are full subcategories  $\cat{D}^-(\cat{M})$, $\cat{D}^+(\cat{M})$ and 
$\cat{D}^{\mrm{b}}(\cat{M})$ of $\cat{D}(\cat{M})$, whose objects are
the bounded above, bounded below and bounded complexes respectively.

Our notation for distinguished triangles in $\cat{K}(\cat{M})$ or 
$\cat{D}(\cat{M})$ is either 
$L \xar{\alpha} M \xar{\beta} N \xar{\gamma} L[1]$,
or simply $L \to M \to N \distri$ 
if the names of the morphisms are not important.

A complex $P \in \cat{C}(\cat{M})$ is called {\em K-projective} if 
for any acyclic complex $N \in \cat{C}(\cat{M})$ the complex 
$\opn{Hom}_{\cat{M}}(P, N)$ is also acyclic. 
A complex $I \in \cat{C}(\cat{M})$ is called {\em K-injective} if 
for any acyclic complex $N \in \cat{C}(\cat{M})$ the complex 
$\opn{Hom}_{\cat{M}}(N, I)$ is also acyclic. 
These definitions were introduced in \cite{Sp}; in \cite[Section 3]{Ke} it is
shown that ``K-projective'' is the same as ``having property (P)'', and
``K-injective'' is the same as ``having property (I)''.

A K-projective resolution of 
$M \in \cat{C}(\cat{M})$ is a quasi-isomorphism $P \to M$ in 
$\cat{C}(\cat{M})$ with $P$ a K-projective complex. If every 
$M \in \cat{C}(\cat{M})$ admits some K-projective resolution, then we say that 
$\cat{C}(\cat{M})$ has enough K-projectives. 
Similarly for K-injectives.

Now we specialize to the case $\cat{M} := \cat{Mod} A$,
where $A$ is a ring. 
A complex $P \in \cat{C}(\cat{Mod} A)$ is called {\em K-flat} if 
for any acyclic complex $N \in \cat{C}(\cat{Mod} A)$ the complex 
$N \otimes_A P$ is also acyclic. Note that a K-projective
complex $P$ is K-flat. 

Here is a useful existence result.

\begin{prop} \label{prop:28}
Let $A$ be a ring, and let $M \in \cat{C}(\cat{Mod} A)$.
\begin{enumerate}
\item The complex $M$ admits a
quasi-isomorphism $P \to M$, where $P$ is a K-projective complex, and moreover
each component $P^i$ is a free $A$-module.

\item The complex $M$ admits a
quasi-isomorphism $P \to M$, where $P$ is a K-flat complex, and moreover
each component $P^i$ is a flat $A$-module.

\item The complex $M$ admits a quasi-isomorphism
$M \to I$, where $I$ is a K-injective complex, and moreover each component
$I^i$ is an injective \lb $A$-module. 
\end{enumerate}
\end{prop}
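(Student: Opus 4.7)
The plan is to follow the approach of Spaltenstein \cite{Sp}. Part (2) is an immediate consequence of (1), since every K-projective complex is K-flat and every free module is flat; I will therefore concentrate on (1) and (3).

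For (1), the first step is to observe that any bounded-above complex $P$ of projective $A$-modules is K-projective: given an acyclic $N$, a cocycle in $\opn{Hom}_A(P, N)$ can be killed by a coboundary constructed via descending induction on degree, using the projectivity of each $P^i$ together with the acyclicity of $N$. Combined with the fact that every $A$-module is a quotient of a free one, a standard Cartan--Eilenberg-style inductive construction produces, for every bounded-above complex $M$, a quasi-isomorphism $P \to M$ with each $P^i$ free and $P$ bounded above, settling (1) for bounded-above $M$.

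To handle an unbounded $M$, I would filter by the soft truncations $M_n := \sigma^{\leq n} M$ and build, inductively in $n$, compatible quasi-isomorphisms $P_n \to M_n$ in which $P_n$ is a bounded-above complex of free modules and $P_{n-1} \inj P_n$ is a degreewise split monomorphism whose cokernel is again a bounded-above complex of free modules (resolving $M^n[-n]$, possibly after a free correction term). Setting $P := \varinjlim_n P_n$ yields a complex of free $A$-modules together with a quasi-isomorphism $P \to M$. K-projectivity of $P$ follows from the mapping telescope short exact sequence
\[ 0 \to \boplus_n P_n \xar{\mrm{id} - \mrm{shift}} \boplus_n P_n \to P \to 0 , \]
which is split in each degree: each $P_n$ is K-projective, and arbitrary direct sums of K-projectives are K-projective (since $\opn{Hom}_A(-, N)$ converts sums to products, and products of acyclic complexes in $\cat{Mod} A$ are acyclic), so $P$ is the cone of a morphism between K-projectives and is itself K-projective.

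For (3), proceed dually inside the Grothendieck abelian category $\cat{Mod} A$. Bounded-below K-injective resolutions with injective components follow from the standard ascending Cartan--Eilenberg construction, using that $\cat{Mod} A$ has enough injectives. For unbounded $M$, use the truncations $\sigma^{\geq -n} M$ with surjective transition maps, construct a compatible inverse system of injective resolutions $\sigma^{\geq -n} M \to I_n$ with $I_{n+1} \surj I_n$ degreewise split surjective with injective kernels, and take $I := \varprojlim_n I_n$. The dual telescope sequence
\[ 0 \to I \to \sprod_n I_n \xar{\mrm{id} - \mrm{shift}} \sprod_n I_n \to 0 \]
is split in each degree; since products of K-injectives are K-injective, $I$ is K-injective, and the induced map $M \to I$ is a quasi-isomorphism. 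The main obstacle in both (1) and (3) is precisely the unbounded case: the bounded constructions are routine, but verifying that the resulting (co)limit inherits K-projectivity or K-injectivity relies on the degreewise-split telescope argument above, which is the essential content of Spaltenstein's theorem.
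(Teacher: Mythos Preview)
Your proposal is correct and follows the Spaltenstein approach; the paper's own proof simply cites \cite[Section 3]{Ke} and \cite{Sp} for all three parts (with (2) deduced from (1) exactly as you do), so you have in effect written out the argument that the paper only references.
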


\begin{proof}
(1) This is proved in \cite[Subsection 3.1]{Ke}, when discussing the existence
of P-resolutions. Cf.\ \cite[Corollary 3.5]{Sp}.

\medskip \noindent
(2) This follows from (1), since any K-projective complex is also K-flat.

\medskip \noindent
(3) See \cite[Subsection 3.2]{Ke}. Cf.\ \cite[Proposition 3.11]{Sp}.
\end{proof}

In particular, the proposition says that $\cat{C}(\cat{Mod} A)$ has enough 
K-projectives, K-flats and K-injectives.

\begin{rem}
Let $(X, \mcal{A})$ be a ringed space, and let 
$\cat{Mod} \mcal{A}$ be the
category of sheaves of $\mcal{A}$-modules. 
It is known that $\cat{C}(\cat{Mod} \mcal{A})$ has enough K-injectives and
enough K-flats; but their structure is more complicated than in the case of \lb 
$\cat{C}(\cat{Mod} A)$, and Proposition \ref{prop:28} might not hold.
\end{rem}

Here are a few facts about K-projective and K-injective resolutions,
compiled from \cite{Sp, BN, Ke}. The first are: a bounded above complex of
projectives is K-projective, a bounded above complex of
flats is K-flat, and a bounded below complex of injectives is
K-injective.

Once again $\cat{M}$ is an abelian category. 
Let $\cat{E}$ be some triangulated category, and let 
$F : \cat{K}(\cat{M}) \to \cat{E}$ be a triangulated functor.
If $\cat{C}(\cat{M})$ has enough K-projectives, then the
left derived functor 
$(\mrm{L} F, \xi) : \cat{D}(\cat{M}) \to \cat{E}$ exists, and it is calculated
by K-projective resolutions. Likewise, if 
$\cat{K}(\cat{M})$ has enough K-injectives, then the right derived
functor $(\mrm{R} F, \xi) : \cat{D}(\cat{M}) \to \cat{E}$ exists,
and it is calculated by K-injective resolutions.

Let $M = \{ M^i \}_{i \in \Z}$ be a graded object of $\cat{M}$.
We define 
\begin{equation} \label{eqn:285}
\inf (M) := \opn{inf}\, \{ i \mid M^i \neq 0 \} \in \Z \cup \{ \pm \infty\}
\end{equation}
and 
\begin{equation} \label{eqn:286}
\sup (M) := \opn{sup}\, \{ i \mid  M^i \neq 0 \} \in \Z \cup \{ \pm \infty\} .
\end{equation}
The amplitude of $M$ is
\begin{equation} \label{eqn:287}
\opn{amp} (M) := \sup (M) - \inf (M) \in \N \cup \{ \pm \infty\} .
\end{equation}
(For $M = 0$ this reads $\inf (M) = \infty$, $\sup (M) = -\infty$ 
and $\opn{amp} (M) = -\infty$.)
Thus $M$ is bounded iff $\opn{amp} (M) < \infty$. 

For $M \in \cat{D}(\cat{M})$ we write
$\mrm{H} (M) := \{ \mrm{H}^i (M) \}_{i \in \Z}$.

\begin{dfn}
Let $\cat{M}$ and $\cat{M}'$ be abelian categories, and let
$F : \cat{D}(\cat{M}) \to \cat{D}(\cat{M}')$ be a triangulated functor.
Let $\cat{E} \subset \cat{D}(\cat{M})$ be a full additive subcategory (not
necessarily triangulated), and consider the restricted functor 
$F|_{\cat{E}} : \cat{E} \to \cat{D}(\cat{M}')$.
\begin{enumerate}
\item We say that $F|_{\cat{E}}$ has {\em finite cohomological dimension} if 
there exist some $n \in \N$ and $s \in \Z$ such that 
for every complex $M \in \cat{E}$ one has
\[ \opn{sup} \bigl( \mrm{H} (F (M)) \bigr) \leq 
\opn{sup} \bigl( \mrm{H} (M) \bigr)  + s \]
and
\[ \opn{inf} \bigl( \mrm{H} (F (M)) \bigr) \geq 
\opn{inf} \bigl( \mrm{H} (M) \bigr) + s - n .
\]
The smallest such number $n$ is called the {\em cohomological dimension} of 
$F|_{\cat{E}}$. 

\item If no such $n$ and $s$ exist then we say $F|_{\cat{E}}$ has {\em
infinite cohomological dimension}. 
\end{enumerate}
\end{dfn}

The number $s$ appearing in the definition represents the shift. (An easy
calculation shows that if $F|_{\cat{E}}$ is nonzero and has finite cohomological
dimension $n$, then the shift $s$ in the definition is unique.) 

If the functor $F$ has finite cohomological dimension, then it is a {\em
way-out functor in both directions}, in the sense of \cite[Section I.7]{RD}.
We will use this fact several times. 

\begin{exa}
Take a nonzero ring $A$, and let 
$P := A[1] \oplus A[2]$, a complex with zero differential concentrated in
degrees $-1$ and $-2$. The functor $F := P \otimes_A -$ 
has cohomological dimension $n = 1$, 
with shift $s = -1$.
\end{exa}

\begin{prop} \label{prop:21}
Let $\cat{M}$, $\cat{M}'$ and $\cat{M}''$ be abelian categories, and let
$F : \cat{D}(\cat{M}) \to \cat{D}(\cat{M}')$ and 
$F' : \cat{D}(\cat{M}') \to \cat{D}(\cat{M}'')$
be triangulated functors. Assume the cohomological dimensions of 
$F$ and $F'$ are $n$ and $n'$ respectively. Then the cohomological dimension
of $F' \circ F$ is at most $n + n'$.
\end{prop}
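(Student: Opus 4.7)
The proof is essentially a direct unpacking of the definition. Let $s$ and $s'$ denote the shifts of $F$ and $F'$ respectively, so that for all $M \in \cat{D}(\cat{M})$ and $N \in \cat{D}(\cat{M}')$ we have the four inequalities
\[
\sup \mrm{H}(F(M)) \leq \sup \mrm{H}(M) + s, \quad
\inf \mrm{H}(F(M)) \geq \inf \mrm{H}(M) + s - n,
\]
\[
\sup \mrm{H}(F'(N)) \leq \sup \mrm{H}(N) + s', \quad
\inf \mrm{H}(F'(N)) \geq \inf \mrm{H}(N) + s' - n'.
\]

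The plan is to apply the second pair of inequalities with $N := F(M)$, then substitute the first pair. This immediately yields
\[
\sup \mrm{H}(F'(F(M))) \leq \sup \mrm{H}(F(M)) + s' \leq \sup \mrm{H}(M) + (s+s')
\]
and
\[
\inf \mrm{H}(F'(F(M))) \geq \inf \mrm{H}(F(M)) + s' - n' \geq \inf \mrm{H}(M) + (s + s') - (n + n').
\]
Thus $F' \circ F$ satisfies the two bounds in the definition with shift $s + s'$ and with $n$ replaced by $n + n'$, so its cohomological dimension is at most $n + n'$.

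There is essentially no obstacle: the argument is a one-line telescoping of inequalities, and the only thing to be slightly careful about is that arithmetic with the extended values $\pm \infty$ (in case $F(M) = 0$ or $M = 0$) behaves correctly, which it does under the conventions set in equations \eqref{eqn:285}--\eqref{eqn:287}. No assumption on existence of derived functors or on resolutions is used, since $F$ and $F'$ are already given as triangulated functors on the derived categories.
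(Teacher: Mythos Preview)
Your proof is correct and is exactly the routine verification the paper has in mind; the paper itself omits the argument entirely with the remark ``We leave out the easy proof.''
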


We leave out the easy proof. 

Here is a useful criterion for quasi-isomorphisms
(a variant of the way-out argument).
For $i, j \in \Z$ let 
$\cat{C}^{[i, j]}(\cat{M})$ be the full subcategory of 
$\cat{C}(\cat{M})$ whose objects are the complexes concentrated in the degree
range $[i, j] := \{ i, i+1, \ldots, j \}$.

\begin{prop} \label{prop:1.1}
Let $\cat{M}$ and $\cat{M}'$ be abelian categories, let
$F, G : \cat{M} \to \cat{C}(\cat{M}')$
be additive functors, and let $\eta : F \to G$ be a natural transformation. 
Assume $\cat{M}'$ has countable direct sums, and consider the extensions 
$F, G : \cat{C}(\cat{M}) \to \cat{C}(\cat{M}')$
by the direct sum totalization. Suppose $M \in \cat{C}(\cat{M})$ satisfies these
two conditions\tup{:}
\begin{enumerate}
\rmitem{i} There are $j_0, j_1 \in \Z$ such that 
$F(M^i), G(M^i) \in \cat{C}^{[j_0, j_1]}(\cat{M}')$
for every $i \in \Z$. 

\rmitem{ii} The homomorphism $\eta_{M^i} : F(M^i) \to G(M^i)$
is a quasi-isomorphism for every $i \in \Z$.
\end{enumerate}
Then $\eta_{M} : F(M) \to G(M)$ is a quasi-isomorphism.
\end{prop}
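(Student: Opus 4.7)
The plan is to realise $F(M)$ and $G(M)$ as total complexes of bicomplexes whose columns are $F(M^i)$ and $G(M^i)$. Condition (i) says that these columns lie uniformly in the vertical band $[j_0, j_1]$, which will let me first reduce to the case of bounded $M$ and then argue by induction on $\opn{amp}(M)$, the base case being supplied by (ii).

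For the first reduction, condition (i) forces every total degree of $F(M)$ to be a \emph{finite} direct sum:
\[
F(M)^k = \bigoplus_{i+j=k,\ j_0 \leq j \leq j_1} F(M^i)^j ,
\]
with contributing indices $i \in [k-j_1,\, k-j_0]$; the same applies to $G(M)^k$. Fixing $k$, let $M_N$ denote the brutal truncation of $M$ to degrees $[-N, N]$. For $N$ chosen much larger than $|k|,\, |j_0|,\, |j_1|$, the complexes $F(M)$ and $F(M_N)$ coincide in degrees $k-1,\, k,\, k+1$ together with all differentials among them, and similarly for $G$ and for $\eta$. Hence $\mrm{H}^k(\eta_M)$ is an isomorphism iff $\mrm{H}^k(\eta_{M_N})$ is, so it suffices to treat bounded $M$.

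I would then induct on $\opn{amp}(M)$. When $M = M^{i}[-i]$ is concentrated in a single degree, the conclusion is immediate from (ii), since $\eta_M = \eta_{M^i}[-i]$. For the inductive step, pick an integer $k$ strictly inside the support of $M$ and consider the short exact sequence of complexes in $\cat{M}$
\[
0 \to M' \to M \to M'' \to 0 ,
\]
where $M'$ is the brutal truncation of $M$ to degrees $> k$ and $M''$ to degrees $\leq k$; this sequence is split in every degree, and both $M'$ and $M''$ have strictly smaller amplitude than $M$. Because $F$ is additive and direct-sum totalization of a bigraded short exact sequence split in each bidegree is again short exact, applying $F$ yields a short exact sequence of total complexes, and likewise for $G$. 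Naturality of $\eta$ produces a morphism of the associated long exact cohomology sequences, and the five lemma combined with the inductive hypothesis delivers the claim.

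The only subtle point is the initial reduction to bounded $M$: one must verify that brutally truncating $M$ outside a bounded window leaves $\mrm{H}^k F(M)$ unchanged. This is precisely where condition (i) and the existence of countable direct sums are indispensable, since without (i) a single total degree of the totalization could receive contributions from infinitely many columns, and an arbitrary truncation could alter the cohomology. Once this reduction is secured, the induction is routine.
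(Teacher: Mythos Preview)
Your proposal is correct and follows essentially the same strategy as the paper: first reduce to bounded $M$ by observing (via condition (i)) that $\mrm{H}^k(\eta_M)$ depends only on a bounded stupid truncation of $M$, then induct on $\opn{amp}(M)$ using the degreewise-split short exact sequence coming from a stupid truncation. Your write-up is in fact slightly more explicit than the paper's about why the degreewise splitting is needed (so that the merely additive functors $F$ and $G$ still yield short exact sequences after totalization).
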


\begin{proof}
Step 1. Assume that $M$ is bounded. We prove that $\eta_{M}$ is a
quasi-iso\-morphism by induction on $\opn{amp}(M)$.
If $\opn{amp}(M) = 0$ then this is given. 
The inductive step is done using the stupid truncation functors 
\begin{equation}
\opn{stt}^{> i}(M), \opn{stt}^{\leq i}(M) : 
\cat{C}(\cat{M}) \to \cat{C}(\cat{M}) ,
\end{equation}
and the related short exact
sequences. See \cite[pages 69-70]{RD}, where the truncations
$\opn{stt}^{> i}(M)$ and $\opn{stt}^{\leq i}(M)$
 are denoted by 
$\tau_{> i}(M)$ and $\tau_{\leq i}(M)$ respectively.

\medskip \noindent
Step 2. Now $M$ is arbitrary. We have to prove that 
$\mrm{H}^i(\eta_{M}) : \mrm{H}^i (F(M)) \to \mrm{H}^i (G(M))$
is an isomorphism for every $i \in \Z$. 
For any $i \leq j$ there is the double truncation functor
$\opn{stt}^{[i, j]} := \opn{stt}^{\leq j} \circ \opn{stt}^{> i}$.
So let us fix $i$. 
The homomorphism $\mrm{H}^i(\eta_{M})$ in $\cat{M}'$ only depends on the
homomorphism of complexes
\[ \opn{stt}^{[i-1, i+1]}(\eta_{M}) :
\opn{stt}^{[i-1, i+1]} (F(M)) \to \opn{stt}^{[i-1, i+1]} (G(M)) . \]
Therefore we can replace $\eta_{M}$ with 
$\eta_{M'} : F(M') \to G(M')$,
where
\[ M' := \opn{stt}^{[j_0 + i - 1, j_1 + i +1]} (M) . \]
But $M'$ is bounded, so by part (1) the homomorphism $\eta_{M'}$ is a
quasi-isomorphism.  
\end{proof}

To end this section, here is a basic result we need, that we could not locate in
the
literature (but that was used implicitly in \cite{Sc}). 

\begin{prop} \label{prop:290}
Let $\cat{M}$ and
$\cat{N}$ be abelian categories, let 
$F : \cat{M} \to \cat{N}$ be an exact additive covariant functor, and let 
$G : \cat{M} \to \cat{N}$ be an exact additive contravariant functor. Then 
for any $M \in \cat{C}(\cat{M})$
there are isomorphisms 
$\mrm{H}^k(F(M)) \cong F(\mrm{H}^k(M))$ and 
$\mrm{H}^{-k}(G(M)) \cong G(\mrm{H}^k(M))$ in $\cat{N}$. Moreover, these
isomorphisms are functorial in $M$, $F$ and $G$. 
\end{prop}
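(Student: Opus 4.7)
The plan is to prove the covariant case by a direct computation, then deduce the contravariant case by the same method applied to the short exact sequences in reverse.

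For the covariant case, the key observation is that an exact additive functor $F$ between abelian categories preserves finite limits and colimits — in particular it preserves kernels, images, and cokernels. Since by definition $\mrm{H}^k(M) = \opn{coker}\bigl(\opn{im}(d_M^{k-1}) \hookrightarrow \ker(d_M^k)\bigr)$ and both $\opn{im}(d_M^{k-1})$ and $\ker(d_M^k)$ are constructed from kernels and images, the canonical comparison morphisms
\[
F(\ker(d_M^k)) \xrightarrow{\simeq} \ker(F(d_M^k)), \quad
F(\opn{im}(d_M^{k-1})) \xrightarrow{\simeq} \opn{im}(F(d_M^{k-1}))
\]
are isomorphisms. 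Passing to cokernels yields $F(\mrm{H}^k(M)) \xrightarrow{\simeq} \mrm{H}^k(F(M))$ in a canonical, and hence functorial, way.

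For the contravariant case, observe that the complex $G(M)$ has $G(M)^{n} = G(M^{-n})$ with differential $\partial^{-k} = G(d_M^{k-1})$, so we must identify $\mrm{H}^{-k}(G(M)) = \ker(G(d_M^{k-1}))/\opn{im}(G(d_M^{k}))$ with $G(\mrm{H}^k(M))$. Since $G$ is exact contravariant it converts monomorphisms to epimorphisms and vice versa, and carries short exact sequences to short exact sequences in reversed order. Applying $G$ to the three canonical short exact sequences
\[
0 \to \ker(d_M^k) \to M^k \to \opn{im}(d_M^k) \to 0,
\]
\[
0 \to \opn{im}(d_M^{k-1}) \to \ker(d_M^k) \to \mrm{H}^k(M) \to 0,
\]
\[
0 \to \ker(d_M^{k-1}) \to M^{k-1} \to \opn{im}(d_M^{k-1}) \to 0,
\]
one reads off that $\opn{im}(G(d_M^k)) = G(\opn{im}(d_M^k))$ as a subobject of $G(M^k)$, and that $\ker(G(d_M^{k-1}))$ is the preimage in $G(M^k)$ of $G(\mrm{H}^k(M)) \subseteq G(\ker(d_M^k))$ under the epimorphism $G(M^k) \twoheadrightarrow G(\ker(d_M^k))$ whose kernel is $G(\opn{im}(d_M^k))$. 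A short diagram chase then produces a canonical isomorphism $\mrm{H}^{-k}(G(M)) \cong G(\mrm{H}^k(M))$.

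Functoriality in $M$ is automatic because every morphism in the argument is obtained from the universal properties of kernels and cokernels. Functoriality in $F$ and $G$ (with respect to natural transformations of exact functors) follows for the same reason: the isomorphisms are assembled from the canonical comparison maps, which are natural in the functor. The proof involves no real obstacle beyond careful bookkeeping; the only subtlety is keeping track of the degree reversal $k \mapsto -k$ in the contravariant case and confirming that the identifications of $\ker$ and $\opn{im}$ of $G(d_M^{k})$ come from the appropriate epi-mono factorizations.
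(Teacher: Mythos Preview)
Your proof is correct and follows essentially the same strategy as the paper: a direct computation using that an exact functor commutes with kernels, images, and cokernels. The only organizational difference is in the contravariant case: where you unpack three short exact sequences and do a small diagram chase, the paper instead uses the dual presentation $\mrm{H}^k(M) \cong \ker\bigl(\mrm{Y}^k(M) \to M^{k+1}\bigr)$ with $\mrm{Y}^k(M) := \opn{coker}(d^{k-1})$; since an exact contravariant $G$ sends cokernels to kernels, one gets a direct comparison morphism $G(\mrm{Y}^k(M)) \to \mrm{Z}^{-k}(G(M))$ and then $G(\mrm{H}^k(M)) \to \mrm{H}^{-k}(G(M))$, exactly mirroring the covariant argument. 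This avoids the explicit chase but is the same mathematics.
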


\begin{proof}
These are very degenerate cases of Grothendieck spectral sequences.
Here is a direct proof. We use the notation 
$\mrm{Z}^k(M) := \opn{Ker}(\d : M^k \to M^{k+1})$,
which is the object of $k$-cocycles of $M$, and  
$\mrm{Y}^k(M) := \opn{Coker}(\d : M^{k-1} \to M^{k})$, 
which does not have a name. There are functorial isomorphisms
\[ \mrm{H}^k(M) \cong \opn{Coker} \bigl( \d : M^{k-1} \to \mrm{Z}^k(M) \bigr)
\cong \opn{Ker} \bigl( \d : \mrm{Y}^k(M) \to  M^{k+1}  \bigr) . \]

For any additive functor $F : \cat{M} \to \cat{N}$ (not necessarily exact) there
is an obvious morphism 
$\al : F(\mrm{Z}^k(M)) \to \mrm{Z}^k(F(M))$, and it induces 
a morphism 
$\bar{\al} : F(\mrm{H}^k(M)) \to \mrm{H}^k(F(M))$.
An easy calculation shows that when $F$ is exact, the morphisms $\al$ and
$\bar{\al}$ are isomorphisms.

Given a contravariant additive functor $G : \cat{M} \to \cat{N}$, 
there is a morphism (slightly less obvious than $\al$, because of the change in
direction) 
$\be : G(\mrm{Y}^k(M)) \to \lb \mrm{Z}^{-k}(G(M))$. This  induces a morphism 
$\bar{\be} : G(\mrm{H}^k(M)) \to \mrm{H}^{-k}(G(M))$.
When $G$ is exact, the morphisms $\be$ and $\bar{\be}$ are isomorphisms.
\end{proof}

\begin{cor} \label{cor:293}
Let $A$ be a ring, $M$ a complex of $A$-modules, $P$ a flat $A$-module, and
$I$ and injective $A$-module. There are isomorphisms
\[ \mrm{H}^k(M \ot_A P) \cong \mrm{H}^k(M) \ot_A P \]
and 
\[ \mrm{H}^{-k} ( \opn{Hom}_A(M, I) ) \cong  \opn{Hom}_A( \mrm{H}^k(M), I) , \]
functorial in $M, P$ and $I$.
\end{cor}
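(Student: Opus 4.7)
The plan is to simply specialize Proposition \ref{prop:290} to two concrete pairs of functors on $\cat{Mod} A$. Take $\cat{M} = \cat{N} = \cat{Mod} A$, let $F := (- \ot_A P)$, and let $G := \opn{Hom}_A(-, I)$. The hypothesis that $P$ is flat says exactly that $F$ is an exact covariant additive functor, and the hypothesis that $I$ is injective says exactly that $G$ is an exact contravariant additive functor. Proposition \ref{prop:290} then yields the isomorphisms
\[ \mrm{H}^k(M \ot_A P) = \mrm{H}^k(F(M)) \cong F(\mrm{H}^k(M)) = \mrm{H}^k(M) \ot_A P \]
and
\[ \mrm{H}^{-k}(\opn{Hom}_A(M, I)) = \mrm{H}^{-k}(G(M)) \cong G(\mrm{H}^k(M)) = \opn{Hom}_A(\mrm{H}^k(M), I), \]
functorially in $M$.

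For the functoriality in $P$ and $I$, I would invoke the ``functorial in $F$ and $G$'' clause of Proposition \ref{prop:290}. A homomorphism $P \to P'$ of flat modules induces a natural transformation $(- \ot_A P) \to (- \ot_A P')$ of exact covariant functors, and a homomorphism $I \to I'$ of injective modules induces a natural transformation $\opn{Hom}_A(-,I) \to \opn{Hom}_A(-, I')$ of exact contravariant functors; so the functoriality statement of the proposition translates directly into the required functoriality in $P$ and $I$.

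There is no real obstacle here: the corollary is a direct application of the preceding proposition, the only content being the identification of flatness with exactness of tensoring and of injectivity with exactness of $\opn{Hom}$.
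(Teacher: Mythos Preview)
Your proof is correct and follows exactly the paper's approach: specialize Proposition \ref{prop:290} to $F = (-\ot_A P)$ and $G = \opn{Hom}_A(-,I)$, noting that flatness of $P$ and injectivity of $I$ give the required exactness. Your explicit remark on functoriality in $P$ and $I$ via natural transformations of the functors is a nice addition that the paper leaves implicit.
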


\begin{proof}
Take $F(M) := M \ot_A P$ and 
$G(M) := \opn{Hom}_A(M, I)$, and use the proposition above.
(These are degenerate cases of the K\"unneth Theorems.)
\end{proof}

\section{The Derived Completion and Torsion Functors} \label{sec:comp}

In this section $A$ is a commutative ring, and $\a$ is an ideal in
it. We do not assume that $\a$ is finitely generated or that $A$ is 
$\a$-adically complete. 

For any $i \in \N$ let
$A_i := A / \a^{i+1}$.
The collection of rings $\{ A_i \}_{i \in \N}$ forms an inverse system. 
Following \cite{GM, AJL1}, for an $A$-module $M$ we write
\begin{equation}
\Lambda_{\a} (M) := \lim_{\leftarrow i}\, (A_i \otimes_A M) 
\end{equation}
for the $\a$-adic completion of $M$, although we sometimes use the more
conventional (yet possibly ambiguous) notation $\what{M}$. 
We get an additive functor 
$\Lambda_{\a} : \cat{Mod} A \to \cat{Mod} A$.
Recall that there is a functorial homomorphism
\begin{equation} \label{eqn:207}
\tau_M : M \to \Lambda_{\a} (M)
\end{equation}
for $M \in \cat{Mod} A$, coming from the homomorphisms
$M \to A_i \otimes_A M$. The module $M$ is called {\em $\a$-adically complete}
if $\tau_M$ is an isomorphism. (Some texts, such as \cite{Bo}, would say that
$M$ is separated and complete). As customary, when $M$ is
complete we usually identify $M$ with $\Lambda_{\a} (M)$ via $\tau_M$.

If the ideal $\a$ is finitely generated, then the functor $\Lambda_{\a}$ is
idempotent, in the sense that the homomorphism
\[ \tau_{\Lambda_{\a} (M)} : \Lambda_{\a} (M) \to 
\Lambda_{\a} (\Lambda_{\a} (M)) \]
is an isomorphism for every module $M$ (see \cite[Corollary 3.6]{Ye2}). 

Let $\what{A} :=  \Lambda_{\a} (A)$. Then $\what{A}$ is a 
ring, and $\tau_A : A \to \what{A}$ is a ring homomorphism. 
If $A$ is noetherian then $\what{A}$ is also noetherian, and flat over $A$. 
One can view the completion as a functor 
$\Lambda_{\a} : \cat{Mod} A \to \cat{Mod} \what{A}$. 
But in this paper we shall usually ignore this. 

\begin{rem}
The full subcategory of $\cat{Mod} A$ consisting of
$\a$-adically complete modules is additive, but not abelian in general. 

It is well known that when $A$ is noetherian, the completion functor
$\Lambda_{\a}$ is exact on 
$\cat{Mod}_{\mrm{f}} A$, the category of finitely generated modules. 
However, on $\cat{Mod} A$ the functor $\Lambda_{\a}$ is neither left exact nor
right exact, even in the noetherian case (see \cite[Examples 3.19 and
3.20]{Ye2}). 

When $A$ is not noetherian, we do not know if $\what{A}$ is flat over $A$.
Still, if $\a$ is finitely generated, and we let 
$\what{\a} := \what{A} \a \subset \what{A}$, then $\what{A}$ is 
$\what{\a}$-adically complete; this follows from \cite[Corollary 3.6]{Ye2}.

If the ideal $\a$ is not finitely generated, things are even worse: the 
functor $\Lambda_{\a}$ can fail to be idempotent; i.e.\ the completion
$\Lambda_{\a} (M)$ of a module $M$ could fail to be complete. See 
\cite[Example 1.8]{Ye2}.
\end{rem}

As for any additive functor, the functor $\Lambda_{\a}$ has a left derived
functor
\begin{equation} \label{eqn:51}
\mrm{L} \Lambda_{\a} : \cat{D}(\cat{Mod} A) \to \cat{D}(\cat{Mod} A) \ , \
\xi : \mrm{L} \Lambda_{\a} \to \Lambda_{\a}
\end{equation}
constructed using K-projective resolutions.

The next result was proved in \cite{AJL1}. Since this is so fundamental, we
chose to reproduce the easy proof.

\begin{lem}[\cite{AJL1}] \label{lem:43}
Let $P$ be an acyclic K-flat complex of $A$-modules. Then the complex 
$\Lambda_{\a} (P)$ is also acyclic. 
\end{lem}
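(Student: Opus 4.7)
The plan is to combine the K-flatness of $P$ with a Milnor/Mittag--Leffler argument for the inverse system $\{A_i \ot_A P\}_{i \in \N}$, whose limit is $\Lambda_{\a}(P)$.

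First I would verify that for each $i$ the complex $A_i \ot_A P$ is acyclic. By Proposition \ref{prop:28}(1), pick a K-projective resolution $Q_i \to A_i$; since $Q_i$ is K-flat and $P$ is acyclic, the K-flatness of $Q_i$ immediately gives that $Q_i \ot_A P$ is acyclic. On the other hand the K-flatness of $P$, applied to the quasi-isomorphism $Q_i \to A_i$, yields a quasi-isomorphism $Q_i \ot_A P \to A_i \ot_A P$; hence $A_i \ot_A P$ is acyclic as well.

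Next, the transition maps $A_{i+1} \ot_A P \to A_i \ot_A P$ are degreewise surjective, because the surjection $A_{i+1} \to A_i$ survives tensoring with each module $P^k$. Consequently the $\lim^{1}$ term vanishes in every cohomological degree, and the standard Milnor exact sequence delivers a short exact sequence of complexes
\[ 0 \to \Lambda_{\a}(P) \to \prod\nolimits_{i \in \N} (A_i \ot_A P) \xrightarrow{1 - \sigma} \prod\nolimits_{i \in \N} (A_i \ot_A P) \to 0, \]
where $\sigma$ is induced by the inverse-system transitions. Because arbitrary direct products are exact in $\cat{Mod} A$, the two middle terms are acyclic, and the long exact cohomology sequence forces $\Lambda_{\a}(P)$ to be acyclic as well.

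The one non-mechanical point is the first step: the defining property of K-flatness of $P$ only allows one to tensor $P$ with an \emph{acyclic} complex, whereas $A_i$ in a single degree is very much not acyclic. Passing through a K-projective resolution $Q_i$ of $A_i$ bridges this gap, so that we apply K-flatness of $Q_i$ against the acyclic complex $P$ rather than trying to use K-flatness of $P$ against the non-acyclic $A_i$.
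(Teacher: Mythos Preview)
Your proof is correct and follows essentially the same strategy as the paper: show each $A_i \otimes_A P$ is acyclic, observe the degreewise surjectivity of the transition maps, and invoke a Mittag--Leffler/Milnor argument to pass to the inverse limit. The paper states the acyclicity of $A_i \otimes_A P$ in one line (``since $P$ is both acyclic and K-flat''), whereas you unpack this via a K-projective resolution $Q_i \to A_i$; your extra care here is justified, since the definition of K-flatness of $P$ does not directly apply to the non-acyclic complex $A_i$, and your resolution trick is the standard way to close that gap.
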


\begin{proof}
Since $P$ is both acyclic and K-flat, for any $i$ we have an acyclic complex 
$A_i \otimes_A P$. The collection of complexes 
$\{ A_i \otimes_A P \}_{i \in \N}$ is an inverse system, and the homomorphism
$A_{i+1} \otimes_A P^j \to A_{i} \otimes_A P^j$
is surjective for every $i$ and $j$. But
$\Lambda_{\a} (P^j) = \lim_{\leftarrow i} \, (A_{i} \otimes_A P^j)$.
By the Mittag-Leffler argument (see \cite[Proposition 1.12.4]{KS1} or
\cite[Theorem 3.5.8]{We})
the complex $\Lambda_{\a} (P)$ is acyclic.
\end{proof}

\begin{prop} \label{prop:26}
If $P$ is a K-flat complex then the morphism
$\xi_P : \mrm{L} \Lambda_{\a} (P) \to \Lambda_{\a} (P)$
in $\cat{D}(\cat{Mod} A)$
is an isomorphism. Thus we can calculate $\mrm{L} \Lambda_{\a}$ using K-flat
resolutions.
\end{prop}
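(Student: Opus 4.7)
The plan is to take a K-projective resolution $\pi : Q \to P$, which exists by Proposition \ref{prop:28}(1). By construction of $\mrm{L}\Lambda_{\a}$ via K-projective resolutions, the object $\mrm{L}\Lambda_{\a}(P)$ is represented by $\Lambda_{\a}(Q)$, and the morphism $\xi_P$ is represented in $\cat{D}(\cat{Mod}\,A)$ by $\Lambda_{\a}(\pi) : \Lambda_{\a}(Q) \to \Lambda_{\a}(P)$. So the task reduces to showing that $\Lambda_{\a}(\pi)$ is a quasi-isomorphism whenever $P$ is K-flat.

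To that end, let $N$ be the mapping cone of $\pi$ in $\cat{C}(\cat{Mod}\,A)$. Since $\pi$ is a quasi-isomorphism, $N$ is acyclic. Since $Q$ is K-projective, it is K-flat, and by hypothesis $P$ is K-flat. The key observation is that the class of K-flat complexes is closed under taking mapping cones: if $N'$ is any acyclic complex, then $N' \ox_A N$ is the mapping cone of $N' \ox_A Q \to N' \ox_A P$, and both of the latter are acyclic, so their cone is acyclic too. Hence $N$ is an acyclic K-flat complex.

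Now invoke Lemma \ref{lem:43} to conclude that $\Lambda_{\a}(N)$ is acyclic. Since $\Lambda_{\a}$ is an additive functor applied componentwise, it commutes with the formation of mapping cones, so $\Lambda_{\a}(N)$ is the mapping cone of $\Lambda_{\a}(\pi) : \Lambda_{\a}(Q) \to \Lambda_{\a}(P)$. An acyclic cone means $\Lambda_{\a}(\pi)$ is a quasi-isomorphism, which is exactly the claim that $\xi_P$ is an isomorphism.

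For the last sentence of the proposition, given an arbitrary $M \in \cat{D}(\cat{Mod}\,A)$, choose a K-flat resolution $P \to M$ using Proposition \ref{prop:28}(2). Then $\mrm{L}\Lambda_{\a}(M) \cong \mrm{L}\Lambda_{\a}(P)$, since $\mrm{L}\Lambda_{\a}$ is a functor on the derived category, and by what was just proved the latter is canonically isomorphic to $\Lambda_{\a}(P)$. The main potential obstacle is verifying that K-flatness is stable under mapping cones in full generality (for unbounded complexes), but this is immediate from the defining property of K-flatness together with the exactness of the tensor-product cone construction.
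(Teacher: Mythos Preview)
Your proof is correct and follows essentially the same approach as the paper: the paper simply says ``This is immediate from Lemma~\ref{lem:43}; cf.\ \cite[Theorem I.5.1]{RD},'' and your argument is precisely the standard unpacking of that citation --- take a K-projective resolution, form the cone, observe it is acyclic K-flat, and apply Lemma~\ref{lem:43}.
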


\begin{proof}
This is immediate from Lemma \ref{lem:43}; cf.\ \cite[Theorem I.5.1]{RD}.
\end{proof}

\begin{prop}[\cite{AJL1}] \label{prop:comp.1}
Let $M \in \cat{D}(\cat{Mod} A)$. There is a morphism
$\tau^{\mrm{L}}_M : M \to \mrm{L} \Lambda_{\a} (M)$
in $\cat{D}(\cat{Mod} A)$, functorial in $M$, such that 
$\xi_M \circ \tau^{\mrm{L}}_M = \tau_M$
as morphisms $M \to \Lambda_{\a} (M)$. 
\end{prop}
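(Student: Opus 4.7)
My plan is to construct $\tau^{\mrm{L}}_M$ by applying the underived completion morphism $\tau$ to a K-projective resolution of $M$ and then inverting the resolution in the derived category. First, observe that $\tau : \mrm{Id} \to \La_{\a}$ is a natural transformation of additive functors on $\cat{Mod} A$, so it extends componentwise to a natural transformation of functors on $\cat{C}(\cat{Mod} A)$, and thence descends to $\cat{K}(\cat{Mod} A)$.

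Now choose a K-projective resolution $f : P \to M$, which exists by Proposition \ref{prop:28}(1). By the construction of $\mrm{L}\La_{\a}$ via K-projective resolutions, the object $\mrm{L}\La_{\a}(M)$ is $\La_{\a}(P)$, and $\xi_M$ is represented in $\cat{D}(\cat{Mod} A)$ by $\La_{\a}(f) : \La_{\a}(P) \to \La_{\a}(M)$. I would then define
\[
\tau^{\mrm{L}}_M := Q(\tau_P) \circ Q(f)^{-1} : M \to \mrm{L}\La_{\a}(M),
\]
where $Q : \cat{K}(\cat{Mod} A) \to \cat{D}(\cat{Mod} A)$ is the localization and $Q(f)$ is invertible because $f$ is a quasi-isomorphism. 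The compatibility $\xi_M \circ \tau^{\mrm{L}}_M = \tau_M$ then reduces to the chain of equalities
\[
\xi_M \circ \tau^{\mrm{L}}_M = Q(\La_{\a}(f) \circ \tau_P) \circ Q(f)^{-1} = Q(\tau_M \circ f) \circ Q(f)^{-1} = Q(\tau_M),
\]
where the middle equality is the naturality square for the underived transformation $\tau$ applied to the morphism $f : P \to M$.

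For independence of the chosen resolution and functoriality in $M$, suppose $g : M \to N$ is a morphism in $\cat{D}(\cat{Mod} A)$ and $f : P \to M$, $f' : P' \to N$ are K-projective resolutions. Since $P$ is K-projective, the canonical map $\opn{Hom}_{\cat{K}(\cat{Mod} A)}(P, P') \to \opn{Hom}_{\cat{D}(\cat{Mod} A)}(P, N)$ is bijective, so there is a unique morphism $\til{g} : P \to P'$ in $\cat{K}(\cat{Mod} A)$ with $Q(f') \circ Q(\til g) = g \circ Q(f)$. Taking $g = \mrm{id}_M$ and $N = M$ shows that the definition of $\tau^{\mrm{L}}_M$ is independent of the resolution, and naturality of $\tau$ applied to $\til g$ in general yields the commutative square $\mrm{L}\La_{\a}(g) \circ \tau^{\mrm{L}}_M = \tau^{\mrm{L}}_N \circ g$. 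The main obstacle is therefore not computational but bookkeeping: it is the standard verification that an explicit choice of K-projective resolution manufactures a canonical morphism in $\cat{D}(\cat{Mod} A)$, and everything else follows mechanically from the naturality of the underived $\tau$.
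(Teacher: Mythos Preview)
Your proof is correct and follows essentially the same approach as the paper: both choose a K-projective resolution $\phi : P \to M$, use that $\xi_P$ is an isomorphism to identify $\La_{\a}(P)$ with $\mrm{L}\La_{\a}(M)$, and define $\tau^{\mrm{L}}_M$ as $\tau_P \circ \phi^{-1}$ under this identification (the paper writes this as $\mrm{L}\La_{\a}(\phi) \circ \xi_P^{-1} \circ \tau_P \circ \phi^{-1}$, which unwinds to the same thing). Your treatment is in fact more explicit than the paper's about the independence of resolution and functoriality, which the paper simply asserts.
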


\begin{proof}
Given $M \in \cat{D}(\cat{Mod} A)$ let us choose a K-projective resolution
$\phi : P \to M$. Since $\phi$ and $\xi_P$ are isomorphisms in
$\cat{D}(\cat{Mod} A)$, we can define 
\[ \tau^{\mrm{L}}_M := \mrm{L} \Lambda_{\a}(\phi) \circ \xi_P^{-1}
\circ \tau_P \circ \phi^{-1} : M \to \mrm{L} \Lambda_{\a} (M)  . \]
This is independent of the the chosen resolution $\phi$, and satisfies 
$\xi_M \circ \tau_M = \tau^{\mrm{L}}_M$.
\end{proof}

\begin{dfn} \label{dfn:2} 
\begin{enumerate}
\item A complex $M \in \cat{D}(\cat{Mod} A)$ is called {\em
$\a$-adically cohomologically complete}
if the morphism $\tau^{\mrm{L}}_M : M \to \mrm{L} \Lambda_{\a} (M)$ is an
isomorphism. 

\item The full subcategory of $\cat{D}(\cat{Mod} A)$ consisting of 
$\a$-adically cohomologically complete complexes is denoted by 
$\cat{D}(\cat{Mod} A)_{\a \tup{-com}}$.
\end{enumerate}
\end{dfn}

It is clear that the subcategory 
$\cat{D}(\cat{Mod} A)_{\a \text{-} \tup{com}}$ is triangulated.

The notion of cohomologically complete complex is quite illusive. See Example
\ref{exa:1} . 

For an $A$-module $M$ and $i \in \N$ we identify 
$\opn{Hom}_A(A_i, M)$ with the submodule 
\[ \{ m \in M \mid \a^{i+1} m = 0 \} \subset M .  \]
The {\em $\a$-torsion submodule} of $M$ is
\[ \Gamma_{\a} (M) := \bigcup_{i \in \N} \, \opn{Hom}_A(A_i, M) 
\subset M . \]
The module $M$ is called an {\em $\a$-torsion module} if
$\Gamma_{\a} (M) = M$. We denote by 
$\cat{Mod}_{\a \tup{-tor}} A$ the full subcategory of $\cat{Mod} A$ consisting
of $\a$-torsion modules.

We get an additive functor 
$\Gamma_{\a} : \cat{Mod} A \to \cat{Mod} A$.
In fact this is a left exact functor. 
There is a functorial homomorphism 
$\sigma_M : \Gamma_{\a} (M) \to M$
which is just the inclusion. The functor $\Gamma_{\a}$ is idempotent, in the
sense that  
$\sigma_{\Gamma_{\a} (M)} : \Gamma_{\a} (\Gamma_{\a} (M)) \to  \Gamma_{\a} (M)$
is bijective. 

Like every additive functor, the functor $\Gamma_{\a}$ has a right derived
functor 
\begin{equation} \label{eqn:53}
\mrm{R} \Gamma_{\a} : \cat{D}(\cat{Mod} A) \to \cat{D}(\cat{Mod} A) \ , \
\xi : \Gamma_{\a} \to \mrm{R} \Gamma_{\a}
\end{equation}
constructed using K-injective resolutions. 

\begin{prop}
There is a functorial morphism
$\sigma^{\mrm{R}}_M : \mrm{R} \Gamma_{\a} (M) \to M $,
such that 
$\sigma_M = \sigma^{\mrm{R}}_M \circ \xi_M$
as morphisms $\Gamma_{\a} (M) \to M$ in $\cat{D}(\cat{Mod} A)$. 
\end{prop}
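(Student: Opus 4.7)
The plan is to dualize the construction of $\tau^{\mrm{L}}_M$ given in Proposition~\ref{prop:comp.1}, using K-injective resolutions in place of K-projective ones. Concretely, given $M \in \cat{D}(\cat{Mod} A)$, pick a K-injective resolution $\phi : M \to I$, which by Proposition~\ref{prop:28}(3) exists. Because $\mrm{R}\Gamma_{\a}$ is computed by K-injective resolutions, the morphism
\[ \xi_I : \Gamma_{\a}(I) \to \mrm{R}\Gamma_{\a}(I) \]
is an isomorphism in $\cat{D}(\cat{Mod} A)$, and similarly $\phi$ and $\mrm{R}\Gamma_{\a}(\phi)$ are isomorphisms there. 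I would then define
\[ \sigma^{\mrm{R}}_M := \phi^{-1} \circ \sigma_I \circ \xi_I^{-1} \circ \mrm{R}\Gamma_{\a}(\phi) : \mrm{R}\Gamma_{\a}(M) \to M . \]

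The first thing to check is that $\sigma^{\mrm{R}}_M$ does not depend on the choice of resolution $\phi$, and is functorial in $M$. This is a routine cofinality argument, exactly mirroring the one implicit in the proof of Proposition~\ref{prop:comp.1}: any two K-injective resolutions are dominated by a third via morphisms in $\cat{K}(\cat{Mod} A)$, and the naturality of $\sigma$ and $\xi$ with respect to such comparison morphisms forces the resulting maps to agree.

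The second and main point is the compatibility $\sigma_M = \sigma^{\mrm{R}}_M \circ \xi_M$. Since $\xi : \Gamma_{\a} \to \mrm{R}\Gamma_{\a}$ is a natural transformation of functors $\cat{D}(\cat{Mod} A) \to \cat{D}(\cat{Mod} A)$, the square
\[
\xymatrix{
\Gamma_{\a}(M) \ar[r]^{\xi_M} \ar[d]_{\Gamma_{\a}(\phi)} & \mrm{R}\Gamma_{\a}(M) \ar[d]^{\mrm{R}\Gamma_{\a}(\phi)} \\
\Gamma_{\a}(I) \ar[r]^{\xi_I} & \mrm{R}\Gamma_{\a}(I)
}
\]
commutes, so $\xi_I^{-1} \circ \mrm{R}\Gamma_{\a}(\phi) \circ \xi_M = \Gamma_{\a}(\phi)$. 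Substituting into the definition of $\sigma^{\mrm{R}}_M$ yields
\[ \sigma^{\mrm{R}}_M \circ \xi_M = \phi^{-1} \circ \sigma_I \circ \Gamma_{\a}(\phi) , \]
and the naturality of $\sigma : \Gamma_{\a} \to \opn{Id}$ gives $\sigma_I \circ \Gamma_{\a}(\phi) = \phi \circ \sigma_M$, from which the required identity follows.

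There is no genuine obstacle here; the only mildly delicate point is to make sure that ``K-injective resolution'' is used in its homotopy-category sense so that $\phi$ and $\mrm{R}\Gamma_{\a}(\phi)$ are honestly invertible in $\cat{D}(\cat{Mod} A)$, and that one verifies the diagram chases at the level of the derived category (not on the nose in $\cat{C}(\cat{Mod} A)$). Both are taken care of by the standard existence results recalled in Section~\ref{sec:prelim}.
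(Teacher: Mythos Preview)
Your proof is correct and follows exactly the same approach as the paper: define $\sigma^{\mrm{R}}_M := \phi^{-1} \circ \sigma_I \circ \xi_I^{-1} \circ \mrm{R}\Gamma_{\a}(\phi)$ via a K-injective resolution $\phi : M \to I$, dual to the construction of $\tau^{\mrm{L}}_M$. You have in fact written out more detail than the paper does, since the paper's proof omits the diagram chase verifying $\sigma_M = \sigma^{\mrm{R}}_M \circ \xi_M$ and merely asserts independence of the resolution.
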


\begin{proof}
Choose a K-injective resolution $\phi : M \to I$, and define 
\[ \sigma^{\mrm{R}}_M := \phi^{-1} \circ \sigma_I \circ 
\xi_I^{-1} \circ \mrm{R} \Gamma_{\a}(\phi) . \]
This is independent of the resolution. 
\end{proof}

\begin{dfn} \label{dfn:8} 
\begin{enumerate}
\item A complex $M \in \cat{D}(\cat{Mod} A)$ is called {\em
cohomologically $\a$-torsion}
if the morphism $\sigma^{\mrm{R}}_M : \mrm{R} \Gamma_{\a} (M) \to M$ is an
isomorphism. 

\item The full subcategory of $\cat{D}(\cat{Mod} A)$ consisting of 
cohomologically $\a$-torsion complexes is denoted by 
$\cat{D}(\cat{Mod} A)_{\a \text{-tor}}$.

\item 
We denote by 
$\cat{D}_{\a \tup{-tor}}(\cat{Mod} A)$ the full subcategory of
$\cat{D}(\cat{Mod} A)$ consisting of the complexes whose cohomology modules are
in $\cat{Mod}_{\a \tup{-tor}} A$.
\end{enumerate}
\end{dfn}

It is clear that the subcategory $\cat{D}(\cat{Mod} A)_{\a \text{-} \mrm{tor}}$
is triangulated.

Since $\cat{Mod}_{\a \tup{-tor}} A$ is a thick abelian subcategory of 
$\cat{Mod} A$,  it follows
that \lb $\cat{D}_{\a \tup{-tor}}(\cat{Mod} A)$ is a triangulated category.
Note that 
$\Gamma_{\a} (I) \in \cat{D}_{\a \tup{-tor}}(\cat{Mod} A)$
for any K-injective complex $I$. Therefore  
\begin{equation} \label{eqn:54}
\cat{D}(\cat{Mod} A)_{\a \text{-} \mrm{tors}} \subset 
\cat{D}_{\a \tup{-tor}}(\cat{Mod} A) . 
\end{equation}
Later (in Corollary \ref{cor:17}) we shall see that there is equality in 
(\ref{eqn:54}) under some extra assumption.

\section{Koszul Complexes and Weak Proregularity} \label{sec:kosz}

In this section we define {\em weakly proregular sequences}. We also set up
notation to be used later. 
The definitions and most of the results in this section are  contained in
\cite{AJL1} and \cite{Sc}. We have included our own short proofs, for the
benefit of the reader. We also give a new motivating example at the end.

Let $A$ be a commutative ring (not necessarily noetherian). 
Recall that for an element $a \in A$ the {\em Koszul complex} 
$\opn{K}(A; a)$ is the complex 
\begin{equation}
\mrm{K}(A; a) :=  \bigl( \cdots \to 0 \to A \xar{a \cdot} A \to 0 \to
\cdots \bigr) 
\end{equation}
concentrated in degrees $-1$ and $0$. 
Now let $\bsym{a} = (a_1, \ldots, a_n)$ be a sequence of elements of $A$. 
The Koszul complex associated to
$\bsym{a}$ is the complex of $A$-modules
\begin{equation}
\opn{K}(A; \bsym{a}) := 
\opn{K}(A; a_1) \otimes_A \cdots \otimes_A \opn{K}(A; a_n) .
\end{equation}
Observe that $\opn{K}(A; \bsym{a})^{0} \cong A$, and
$\opn{K}(A; \bsym{a})^{-1}$ is a free $A$-module of rank $n$. 
Moreover,  $\opn{K}(A; \bsym{a})$ is a super-commutative DG algebra:
as a graded algebra it is the exterior algebra over $A$
of the module $\opn{K}(A; \bsym{a})^{-1}$.
There is a DG algebra homomorphism 
\begin{equation} \label{eqn:251}
e_{\bsym{a}} : A \to \opn{K}(A; \bsym{a}) . 
\end{equation}

Let us denote by $(\bsym{a})$ the ideal generated by the sequence $\bsym{a}$,
so that 
\[  A / (\bsym{a}) \cong 
A / (a_1) \otimes_A \cdots \otimes_A A / (a_n)  \]
as $A$-algebras.
There is an $A$-algebra isomorphism
\begin{equation} \label{eqn:217}
\mrm{H}^0 (\opn{K}(A; \bsym{a})) \cong  A / (\bsym{a}) . 
\end{equation}

For any $j \geq i$ in $\N$ there is a homomorphism of complexes 
\begin{equation}
p_{a, j, i}: \opn{K}(A; a^{j}) \to \opn{K}(A; a^{i}) , 
\end{equation}
which is the identity in degree $0$, and multiplication by $a^{j-i}$ in degree
$-1$. This operation makes sense also for sequences: given a sequence $\bsym{a}$
as above, let us write $\bsym{a}^i := (a_1^i, \ldots, a_n^i)$.
There is a homomorphism of complexes 
\begin{equation} \label{eqn:201}
p_{\bsym{a}, j, i} :  \opn{K}(A; \bsym{a}^{j}) \to \opn{K}(A; \bsym{a}^{i})
\ , \ p_{\bsym{a}, j, i} := p_{a_1, j, i} \ot \cdots \ot p_{a_n, j, i}  . 
\end{equation}
In fact $p_{\bsym{a}, j, i}$ is a homomorphism of DG algebras, and
$\mrm{H}^0 (p_{\bsym{a}, j, i})$ corresponds via (\ref{eqn:217}) to the
canonical surjection 
$A / (\bsym{a}^{j}) \to A / (\bsym{a}^i)$. 
The homomorphisms 
\begin{equation} \label{eqn:256}
\mrm{H}^k (p_{\bsym{a}, j, i}) :
\mrm{H}^k ( \opn{K}(A; \bsym{a}^{j}) ) \to 
\mrm{H}^k ( \opn{K}(A; \bsym{a}^{i}) )
\end{equation}
make 
$\bigl\{ \mrm{H}^k ( \opn{K}(A; \bsym{a}^{i}) ) \bigr\}_{i \in \N}$
into an inverse system of $A$-modules.

Let $P$ be a finite rank free $A$-module.
We shall often write $P^{\vee} := \opn{Hom}_A(P, A)$. 
Given any $A$-module $M$, there is an isomorphism 
\begin{equation} \label{eqn:300}
\opn{Hom}_A(P, M) \cong P^{\vee} \ot_A M ,
\end{equation}
functorial in $M$ and $P$.

The {\em dual Koszul complex} associated to the sequence 
$\bsym{a} = (a_1, \ldots, a_n)$ is the complex 
\begin{equation} \label{eqn:5.2}
\opn{K}^{\vee}(A; \bsym{a}) := 
\opn{Hom}_A \bigl( \opn{K}(A; \bsym{a}) , A \bigr) .
\end{equation}
This is complex of finite rank free $A$-modules, concentrated in degrees 
$0, \ldots, n$. Indeed, for a single element $a$ there is a canonical
isomorphism of complexes 
\begin{equation} \label{eqn:203}
\opn{K}^{\vee}(A; a) \cong 
\bigl( \cdots \to 0 \to A \xar{a \cdot } A \to 0 \to
\cdots \bigr) 
\end{equation}
with $A$ sitting in degrees $0$ and $1$. And for the sequence we have 
\[ \opn{K}^{\vee}(A; \bsym{a}) \cong 
\opn{K}^{\vee}(A; a_1) \otimes_A \cdots \otimes_A \opn{K}^{\vee}(A; a_n) . \]
The dual 
$e_{\bsym{a}}^{\vee} := \opn{Hom}(e_{\bsym{a}}, 1_A)$ 
of $e_{\bsym{a}}$ is a homomorphism of complexes 
\begin{equation} \label{eqn:252}
e_{\bsym{a}}^{\vee} : \opn{K}^{\vee}(A; \bsym{a}) \to A . 
\end{equation}

For any $j \geq i$ in $\N$ there is a homomorphism of complexes 
\begin{equation} \label{eqn:209}
p_{\bsym{a}, j, i}^{\vee} :
\opn{K}^{\vee}(A; \bsym{a}^{i}) \to \opn{K}^{\vee}(A; \bsym{a}^{j}) , 
\end{equation}
which comes from dualizing the homomorphism (\ref{eqn:201}). 
In this way the collection 
$\bigl\{ \opn{K}^{\vee}(A; \bsym{a}^{i}) \bigr\}_{i \in \N}$
becomes a direct system of complexes. 
The {\em infinite dual Koszul complex} associated to a sequence $\bsym{a}$
in $A$ is the complex of $A$-modules
\begin{equation}
\opn{K}^{\vee}_{\infty}(A; \bsym{a}) := 
\lim_{i \to}\, \opn{K}^{\vee}(A; \bsym{a}^{i})  . 
\end{equation}

For a single element $a \in A$ the infinite  dual Koszul complex looks like
this: there is a canonical isomorphism 
\begin{equation} \label{eqn:202}
\opn{K}^{\vee}_{\infty}(A; a) \cong 
\bigl( \cdots \to 0 \to A \xar{} A[a^{-1}] \to 0 \to
\cdots \bigr) 
\end{equation}
where $A$ is in degree $0$, $A[a^{-1}]$ is in degree $1$, and the differential 
$A \to A[a^{-1}]$ is the ring homomorphism. 
For a sequence we have 
\begin{equation} \label{eqn:210}
 \opn{K}^{\vee}_{\infty}(A; \bsym{a})  \cong 
\opn{K}^{\vee}_{\infty}(A; a_1) \ot_A \cdots \ot_A 
\opn{K}^{\vee}_{\infty}(A; a_n) .
\end{equation}
Thus $\opn{K}^{\vee}_{\infty}(A; \bsym{a})$ is a complex of flat
$A$-modules concentrated in degrees $0, \ldots, n$.

Let us write 
\begin{equation}
e_{\bsym{a}, i}^{\vee}  :
\opn{K}^{\vee}(A; \bsym{a}^i) \to A \ , \
e_{\bsym{a}, i}^{\vee} : = e_{\bsym{a}^i}^{\vee} \ ,
\end{equation}
where $e_{\bsym{a}^i}^{\vee}$ is from  (\ref{eqn:252}). 
The homomorphisms $e_{\bsym{a}, i}^{\vee}$ respect the direct system, and in the
limit we get 
\begin{equation} \label{eqn:213}
e_{\bsym{a}, \infty}^{\vee}  : 
\opn{K}^{\vee}_{\infty}(A; \bsym{a}) \to A \ , \
e_{\bsym{a}, \infty}^{\vee} := \lim_{i \to}\, e_{\bsym{a}, i}^{\vee} \ . 
\end{equation}

Let $\a$ be the ideal in $A$ generated by the sequence 
$\bsym{a} = (a_1, \ldots, a_n)$.
{}From equations (\ref{eqn:202}) and (\ref{eqn:210}) we see that 
\begin{equation} \label{eqn:280}
\mrm{H}^0 \bigl( \opn{K}^{\vee}_{\infty}(A; \bsym{a}) \otimes_A M \bigr)
\cong \Gamma_{\a} (M) 
\end{equation}
for any $M \in \cat{Mod} A$. 
This gives rise to a functorial homomorphism of complexes
\begin{equation} \label{eqn:4.10}
v_{\bsym{a}, M} : \Gamma_{\a}  (M) \to 
\opn{K}^{\vee}_{\infty}(A; \bsym{a}) \otimes_A M 
\end{equation}
that satisfies 
\begin{equation} \label{eqn:222}
(e_{\bsym{a}, \infty}^{\vee} \ot 1_M) \circ v_{\bsym{a}, M}= \sigma_M 
\end{equation}
as homomorphisms $\Gamma_{\a}  (M) \to M$.

An inverse system 
$\{ M_i \}_{i \in \N}$
of abelian groups, with transition maps $p_{j, i} :M_j \to M_i$,
is called {\em pro-zero} if for every $i$ there exists $j \geq i$ such that 
$p_{j, i}$ is zero. (This is the name used in \cite{Sc}.)
We shall use the fact that a pro-zero inverse system satisfies the
Mittag-Leffler condition. 
See \cite[Definition 3.5.6]{We}, where the condition ``pro-zero'' is called the 
``trivial Mittag-Leffler'' condition. 

\begin{dfn} \label{dfn:250} 
\begin{enumerate}
\item Let $\bsym{a}$ be a finite sequence in a ring $A$. The sequence 
$\bsym{a}$ is
called {\em a weakly proregular sequence}  if for every $k < 0$ the inverse
system 
$\bigl\{ \mrm{H}^k ( \opn{K}(A; \bsym{a}^{i}) ) \bigr\}_{i \in \N}$ 
(see (\ref{eqn:256})) is pro-zero.

\item An ideal $\a$ in a ring $A$ is called a {\em a weakly proregular ideal} if
it is generated by some weakly proregular sequence. 
\end{enumerate}
\end{dfn}

This definition comes from \cite[Correction]{AJL1}.
The etymology and history of related concepts are explained in \cite{AJL1} and
\cite{Sc}.  

\begin{exa}
A regular sequence $\bsym{a}$ is weakly proregular, since 
$\mrm{H}^k \bigl( \opn{K}(A; \bsym{a}^{i}) \bigr) = 0$ for all $i > 0$ and 
$k < 0$.
\end{exa}

\begin{lem}  \label{lem:250}
Let $\{ M_i \}_{i \in \N}$ be an inverse system of $A$-modules. The
following conditions are equivalent\tup{:}
\begin{enumerate}
 \rmitem{i} The system $\{ M_i \}_{i \in \N}$ is pro-zero.

\rmitem{ii} For every injective $A$-module $I$,
$\lim_{i \to}\, \opn{Hom}_A(M_i, I) = 0$. 
\end{enumerate}
\end{lem}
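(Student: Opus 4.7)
The plan is to prove both directions of the equivalence by fairly direct manipulations, using the embedding of modules into injectives as the main trick for the nontrivial direction.

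For the implication (i) $\Rightarrow$ (ii), I would argue essentially formally. Fix an injective $A$-module $I$. The contravariant functor $\opn{Hom}_A(-, I)$ turns the inverse system $\{M_i\}$ into a direct system $\{\opn{Hom}_A(M_i, I)\}$, with transition maps $\opn{Hom}_A(p_{j,i}, I)$ given by precomposition with $p_{j,i}$. If $\bsym{a}$ the system is pro-zero, then for every $i$ there is some $j \geq i$ with $p_{j,i} = 0$, hence $\opn{Hom}_A(p_{j,i}, I) = 0$. Any element $\phi \in \opn{Hom}_A(M_i, I)$ thus maps to $\phi \circ p_{j,i} = 0$ at level $j$, so its image in $\lim_{i \to} \opn{Hom}_A(M_i, I)$ vanishes. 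Therefore the direct limit is zero.

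For the converse (ii) $\Rightarrow$ (i), the key step is to embed the modules of the inverse system into injective modules. Fix $i \in \N$. Since $\cat{Mod} A$ has enough injectives, choose a monomorphism $\iota : M_i \hookrightarrow I$ with $I$ an injective $A$-module. Regard $\iota$ as an element of $\opn{Hom}_A(M_i, I)$. By hypothesis (ii), the direct limit $\lim_{i \to} \opn{Hom}_A(M_i, I)$ vanishes, so the class of $\iota$ in the direct limit is zero. Unwinding the definition of the direct limit of $A$-modules, this means there exists $j \geq i$ such that the image of $\iota$ at level $j$ is zero, i.e.\ $\iota \circ p_{j,i} = 0$ as a homomorphism $M_j \to I$. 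Since $\iota$ is injective, this forces $p_{j,i} = 0$, which is exactly what pro-zero demands.

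There is no real obstacle here; the only substantive point is recognizing that one should test with an injective envelope (or any injective into which $M_i$ embeds), so that killing the composition $\iota \circ p_{j,i}$ is equivalent to killing $p_{j,i}$ itself. The rest is bookkeeping about direct limits of $A$-modules.
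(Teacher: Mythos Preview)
Your proof is correct and follows essentially the same approach as the paper: the paper also calls (i) $\Rightarrow$ (ii) trivial, and for (ii) $\Rightarrow$ (i) embeds $M_i$ into an injective $I$, uses the vanishing of the direct limit to find $j \geq i$ with $\phi \circ p_{j,i} = 0$, and then invokes injectivity of $\phi$ to conclude $p_{j,i} = 0$. (There is a stray ``$\bsym{a}$'' in your write-up of the forward direction that you should delete.)
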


\begin{proof}
The implication (i) $\Rightarrow$ (ii) is trivial. 
For the other direction, take any $i \in \N$, and choose an embedding 
$\phi : M_i \inj I$ for some injective module $I$. So $\phi$ is an
element of $\opn{Hom}_A(M_i, I)$. Since the limit is zero, there is some $j \geq
i$ such that $\phi \circ p_{j, i} = 0$.  Here $p_{j, i} :M_j \to M_i$ is the
transition map. This implies that $p_{j, i} = 0$.
\end{proof}

\begin{thm} \label{thm:252}
Let $\bsym{a}$ be a finite sequence in a ring $A$. The following conditions are
equivalent\tup{:}
\begin{enumerate}
\rmitem{i} The sequence $\bsym{a}$ is weakly proregular.

\rmitem{ii} For any injective module $I$ and any $k > 0$ the $A$-module 
$\mrm{H}^k (\opn{K}^{\vee}_{\infty}(A; \bsym{a}) \otimes_A I)$
is zero. 
\end{enumerate}
\end{thm}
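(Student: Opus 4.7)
The plan is to reduce condition (ii) to the definition of weak proregularity via Lemma \ref{lem:250}, after rewriting the object $\opn{K}^{\vee}_{\infty}(A; \bsym{a}) \otimes_A I$ in terms of the original Koszul complexes.

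First I would use that each $\opn{K}(A; \bsym{a}^i)$ is a bounded complex of finite rank free $A$-modules to obtain a natural isomorphism of complexes
\[
\opn{K}^{\vee}(A; \bsym{a}^i) \otimes_A I \;\cong\; \opn{Hom}_A\bigl(\opn{K}(A; \bsym{a}^i),\, I\bigr),
\]
compatible with the transition maps (the left side using $p^{\vee}_{\bsym{a}, j, i}$, the right side using $\opn{Hom}_A(p_{\bsym{a}, j, i}, 1_I)$). Passing to the filtered colimit over $i \in \N$ and using that filtered colimits in $\cat{Mod} A$ are exact (so commute with $\mrm{H}^k$), I get
\[
\mrm{H}^k\bigl( \opn{K}^{\vee}_{\infty}(A; \bsym{a}) \otimes_A I \bigr)
\;\cong\; \lim_{i \to}\, \mrm{H}^k\bigl( \opn{Hom}_A(\opn{K}(A; \bsym{a}^i), I) \bigr).
\]

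Second, since $I$ is injective, the contravariant functor $\opn{Hom}_A(-, I)$ on $\cat{Mod} A$ is exact, so Corollary \ref{cor:293} gives a natural isomorphism
\[
\mrm{H}^k\bigl( \opn{Hom}_A(\opn{K}(A; \bsym{a}^i), I) \bigr)
\;\cong\; \opn{Hom}_A\bigl( \mrm{H}^{-k}( \opn{K}(A; \bsym{a}^i) ),\, I \bigr),
\]
functorial in $i$ (and so compatible with the transition maps in the ``$i$'' direction). Combining the two displays, for any $k > 0$,
\[
\mrm{H}^k\bigl( \opn{K}^{\vee}_{\infty}(A; \bsym{a}) \otimes_A I \bigr)
\;\cong\; \lim_{i \to}\, \opn{Hom}_A\bigl( \mrm{H}^{-k}( \opn{K}(A; \bsym{a}^i) ),\, I \bigr).
\]

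Third, I apply Lemma \ref{lem:250} to the inverse system $\bigl\{ \mrm{H}^{-k}( \opn{K}(A; \bsym{a}^i) ) \bigr\}_{i \in \N}$: this system is pro-zero if and only if the right-hand colimit above vanishes for every injective $I$. Since $k > 0$ corresponds to $-k < 0$, letting $k$ range over positive integers corresponds exactly to letting the index in Definition \ref{dfn:250}(1) range over negative integers. Hence condition (ii) (for all $k > 0$ and all injective $I$) is equivalent to weak proregularity, proving (i)$\Leftrightarrow$(ii). The only subtle point is checking that the isomorphisms in the first two steps are natural in $i$, so that passing to the colimit is legitimate; this is routine from the explicit construction of the morphisms $\al, \be$ in the proof of Proposition \ref{prop:290}.
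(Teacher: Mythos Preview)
Your proof is correct and follows essentially the same route as the paper's: both rewrite $\mrm{H}^k(\opn{K}^{\vee}_{\infty}(A;\bsym{a})\otimes_A I)$ as $\lim_{i\to}\opn{Hom}_A(\mrm{H}^{-k}(\opn{K}(A;\bsym{a}^i)),I)$ via the finite-free duality isomorphism, commutation of filtered colimits with cohomology, and Corollary~\ref{cor:293}, and then invoke Lemma~\ref{lem:250}. The only difference is cosmetic ordering of the steps.
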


\begin{proof}
Take any injective $A$-module $I$. We get isomorphisms:
\[ \begin{aligned}
& 
\mrm{H}^k \bigl (\opn{K}^{\vee}_{\infty}(A; \bsym{a}) \otimes_A I \bigr)
\cong^{\lozenge} \mrm{H}^k \bigl ( \lim_{j \to}\, \bigl( \opn{K}^{\vee}(A;
\bsym{a}^j)
\otimes_A I \bigr) \bigr)
\\
& \qquad 
\cong^{\lozenge} \lim_{j \to}\,  \mrm{H}^k \bigl ( \opn{K}^{\vee}(A; \bsym{a}^j)
\otimes_A
I \bigr) 
\cong^{\vartriangle} \lim_{j \to}\, \mrm{H}^k \bigl( \opn{Hom}_A
\bigl ( \opn{K}(A; \bsym{a}^j), I \bigr) \bigr)
\\
& \qquad 
\cong^{\heartsuit} \lim_{j \to}\, \opn{Hom}_A \bigl ( 
\mrm{H}^{-k}  \bigl(  \opn{K}(A; \bsym{a}^j) \bigr), I \bigr) .
\end{aligned} \]
The isomorphisms marked $\lozenge$ are because  direct
limits commute with tensor products and cohomology; 
the isomorphism  $\vartriangle$ is by (\ref{eqn:300}); 
and the isomorphism marked $\heartsuit$ is due to Corollary \ref{cor:293}. By
Lemma \ref{lem:250} the vanishing of this last limit 
for every $k > 0$ is equivalent to weak proregularity.
\end{proof}

\begin{cor} \label{cor:250}
Let $\bsym{a}$ be a weakly proregular sequence in $A$, $\a$ the ideal generated
by $\bsym{a}$, and $I$ a K-injective complex in $\cat{C}(\cat{Mod} A)$.
Then the homomorphism
\[ v_{\bsym{a}, I} : \Gamma_{\a} (I) \to 
\opn{K}^{\vee}_{\infty}(A; \bsym{a}) \otimes_A I  \]
is a quasi-isomorphism.
\end{cor}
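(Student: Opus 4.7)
The plan is to reduce to a checkable pointwise statement about single injective modules, and then bootstrap to K-injective complexes via the way-out argument of Proposition \ref{prop:1.1}.

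First, I would reduce to the case of a K-injective complex whose components are injective $A$-modules. By Proposition \ref{prop:28}(3), there exists a quasi-isomorphism $I \to I'$ with $I'$ K-injective and each ${I'}^i$ injective. Since both $I$ and $I'$ are K-injective, this is a homotopy equivalence, hence $\Gamma_{\a}(I) \to \Gamma_{\a}(I')$ is a homotopy equivalence too. On the other side, $\opn{K}^{\vee}_{\infty}(A; \bsym{a})$ is a bounded complex of flat modules (so K-flat), and therefore $\opn{K}^{\vee}_{\infty}(A; \bsym{a}) \otimes_A I \to \opn{K}^{\vee}_{\infty}(A; \bsym{a}) \otimes_A I'$ is a quasi-isomorphism. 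Naturality of $v_{\bsym{a},-}$ lets me replace $I$ by $I'$.

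Next, I would verify the pointwise statement: for any single injective $A$-module $J$, the map $v_{\bsym{a}, J} : \Gamma_{\a}(J) \to \opn{K}^{\vee}_{\infty}(A; \bsym{a}) \otimes_A J$ (where the source is placed in degree $0$) is a quasi-isomorphism. The target is concentrated in degrees $0, \ldots, n$. In degree $0$, the map is the identification of $\Gamma_{\a}(J)$ with $\mrm{H}^0(\opn{K}^{\vee}_{\infty}(A; \bsym{a}) \otimes_A J)$ from (\ref{eqn:280}). In degrees $k > 0$, the source vanishes, and by Theorem \ref{thm:252} (which crucially uses weak proregularity) the target cohomology $\mrm{H}^k(\opn{K}^{\vee}_{\infty}(A; \bsym{a}) \otimes_A J)$ also vanishes since $J$ is injective.

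Finally, I would apply Proposition \ref{prop:1.1} to the natural transformation $v_{\bsym{a},-}$ between the functors $F = \Gamma_{\a}$ and $G = \opn{K}^{\vee}_{\infty}(A; \bsym{a}) \otimes_A (-)$ from $\cat{Mod} A$ to $\cat{C}(\cat{Mod} A)$. For our (reduced) complex $I'$ with injective components, condition (i) holds with $j_0 = 0$ and $j_1 = n$: $F({I'}^i)$ lives in degree $0$ and $G({I'}^i)$ in degrees $[0,n]$. Condition (ii) is exactly what was checked in the previous paragraph. Proposition \ref{prop:1.1} then yields that $v_{\bsym{a}, I'}$ is a quasi-isomorphism, and transporting back gives the result for $I$. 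The main technical step is the pointwise injective-module case, which is where weak proregularity enters through Theorem \ref{thm:252}; the rest is bookkeeping via K-flatness of $\opn{K}^{\vee}_{\infty}(A; \bsym{a})$ and the way-out argument.
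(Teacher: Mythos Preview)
Your proposal is correct and follows essentially the same route as the paper: reduce via Proposition \ref{prop:28}(3) to a K-injective complex with injective components, then apply Proposition \ref{prop:1.1} to reduce to a single injective module, and finish using (\ref{eqn:280}) for degree $0$ and Theorem \ref{thm:252} for degrees $k>0$. Your write-up is slightly more explicit about why the vertical comparison maps are quasi-isomorphisms (homotopy equivalence on the $\Gamma_{\a}$ side, K-flatness of $\opn{K}^{\vee}_{\infty}(A;\bsym{a})$ on the tensor side), and you correctly cite part (3) of Proposition \ref{prop:28} where the paper has a typo citing part (2).
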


\begin{proof}
By Proposition \ref{prop:28}(2) we can find a quasi-isomorphism
$I \to J$, where $J$ is K-injective and every $A$-module $J^i$ is injective.
Consider the commutative diagram 
\[ \UseTips \xymatrix @C=7ex @R=5ex {
\Gamma_{\a} (I)
\ar[r]^(0.35){v_{\bsym{a}, I}}
\ar[d]
& 
\opn{K}^{\vee}_{\infty}(A; \bsym{a}) \otimes_A I
\ar[d]
\\
\Gamma_{\a} (J)
\ar[r]^(0.35){v_{\bsym{a}, J}}
& 
\opn{K}^{\vee}_{\infty}(A; \bsym{a}) \otimes_A J
} \]
in $\cat{C}(\cat{Mod} A)$.
The vertical arrows are quasi-isomorphisms (for instance because $I \to J$ is a
homotopy equivalence). It suffices to prove that 
$v_{\bsym{a}, J}$ is a quasi-isomorphism. 

Let us write
$F(M) := \Gamma_{\a} (M)$ and 
$G(M) := \opn{K}^{\vee}_{\infty}(A; \bsym{a}) \otimes_A M$
for $M \in \cat{Mod} A$. We need to show that 
$v_{\bsym{a}, J} : F(J) \to G(J)$ is a quasi-isomorphism.
By Proposition \ref{prop:1.1} we may assume that 
$J$ is a single injective module. In this case we know that 
$\mrm{H}^0(v_{\bsym{a}, J})$ is bijective; see (\ref{eqn:280}). Theorem
\ref{thm:252} implies that $\mrm{H}^k(v_{\bsym{a}, J})$ is bijective for 
$k > 0$. And of course 
\[ \mrm{H}^k \bigl( \Gamma_{\a} (J) \bigr) = 
\mrm{H}^k \bigl(  \opn{K}^{\vee}_{\infty}(A; \bsym{a}) \otimes_A J \bigr)= 0 \]
for all $k < 0$. 
Hence $v_{\bsym{a}, J}$ is a quasi-isomorphism. 
\end{proof}

\begin{cor} \label{cor:kosz.1}
Let $\bsym{a}$ be a weakly proregular sequence in $A$, and $\a$ the ideal
generated by $\bsym{a}$.
For any $M \in \cat{D}(\cat{Mod} A)$ there is an isomorphism
\[ v^{\mrm{R}}_{\bsym{a}, M} : \mrm{R} \Gamma_{\a} (M) \to 
\opn{K}^{\vee}_{\infty}(A; \bsym{a}) \otimes_A M \]
in $\cat{D}(\cat{Mod} A)$. The isomorphism $v^{\mrm{R}}_{\bsym{a}, M}$ is
functorial in $M$, and satisfies 
\[ (e_{\bsym{a}, \infty}^{\vee} \ot 1_M) \circ v^{\mrm{R}}_{\bsym{a}, M}=
\sigma^{\mrm{R}}_M  \]
as morphisms $\mrm{R} \Gamma_{\a}  (M) \to M$.
\end{cor}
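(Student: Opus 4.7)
The plan is to reduce to Corollary \ref{cor:250} by replacing $M$ with a K-injective resolution, and then to transport the resulting quasi-isomorphism through the K-flatness of the infinite dual Koszul complex.

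More precisely, given $M \in \cat{D}(\cat{Mod} A)$, I would first invoke Proposition \ref{prop:28}(3) to choose a K-injective resolution $\phi : M \to I$. By construction of $\mrm{R} \Ga_{\a}$ via K-injective resolutions together with the isomorphism $\xi_I : \Ga_{\a}(I) \iso \mrm{R}\Ga_{\a}(I)$ (which holds because $I$ is K-injective), we may identify $\mrm{R}\Ga_{\a}(M)$ in $\cat{D}(\cat{Mod} A)$ with $\Ga_{\a}(I)$. On the other hand, the complex $\opn{K}^{\vee}_{\infty}(A; \bsym{a})$ is a bounded complex of flat $A$-modules (being concentrated in degrees $0, \ldots, n$ with flat components, by equations (\ref{eqn:202}) and (\ref{eqn:210})); hence it is K-flat, and the functor $\opn{K}^{\vee}_{\infty}(A; \bsym{a}) \ot_A (-)$ preserves quasi-isomorphisms on $\cat{C}(\cat{Mod} A)$. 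Therefore $1 \ot \phi : \opn{K}^{\vee}_{\infty}(A; \bsym{a}) \ot_A M \to \opn{K}^{\vee}_{\infty}(A; \bsym{a}) \ot_A I$ is an isomorphism in $\cat{D}(\cat{Mod} A)$.

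With these identifications in place, I would \emph{define} $v^{\mrm{R}}_{\bsym{a}, M}$ to be the composition
\[ \mrm{R} \Ga_{\a}(M) \xleftarrow{\simeq} \Ga_{\a}(I) \xrightarrow{v_{\bsym{a}, I}} \opn{K}^{\vee}_{\infty}(A; \bsym{a}) \ot_A I \xleftarrow{1 \ot \phi \, , \, \simeq} \opn{K}^{\vee}_{\infty}(A; \bsym{a}) \ot_A M \]
in $\cat{D}(\cat{Mod} A)$. That $v^{\mrm{R}}_{\bsym{a}, M}$ is an isomorphism is exactly Corollary \ref{cor:250} applied to $I$. Independence of the chosen resolution $\phi$ is the standard argument: any two K-injective resolutions are connected by a homotopy equivalence, and the construction $v_{\bsym{a}, -}$ is natural, so the resulting morphism in the derived category is well-defined. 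Functoriality in $M$ follows in the same way from the naturality of K-injective resolutions together with the naturality of $v_{\bsym{a}, -}$ as a morphism of functors $\Ga_{\a} \to \opn{K}^{\vee}_{\infty}(A; \bsym{a}) \ot_A (-)$.

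Finally, the identity $(e_{\bsym{a}, \infty}^{\vee} \ot 1_M) \circ v^{\mrm{R}}_{\bsym{a}, M} = \sigma^{\mrm{R}}_M$ is obtained by applying (\ref{eqn:222}) to the K-injective resolution $I$, which gives $(e_{\bsym{a}, \infty}^{\vee} \ot 1_I) \circ v_{\bsym{a}, I} = \sigma_I$, and then transferring this identity across the isomorphisms $\phi$ and $\xi_I$ defining $\sigma^{\mrm{R}}_M$. No serious obstacle is expected here: every ingredient is already in place, and the only care needed is to verify that the three canonical morphisms in the composition are indeed compatible with the structure maps to $M$. The main technical input, namely that $v_{\bsym{a}, I}$ is a quasi-isomorphism on K-injective $I$, has already been done in Corollary \ref{cor:250}; and the flatness input allowing us to pass from $I$ back to $M$ on the right-hand side is elementary.
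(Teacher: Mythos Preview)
Your proposal is correct and follows essentially the same approach as the paper: reduce to a K-injective resolution $I$, apply Corollary \ref{cor:250} to get that $v_{\bsym{a}, I}$ is a quasi-isomorphism, and use equation (\ref{eqn:222}) for the compatibility with $\sigma^{\mrm{R}}_M$. The paper's proof is terser (``It is enough to consider a K-injective complex $M = I$''), whereas you spell out explicitly why the right-hand side $\opn{K}^{\vee}_{\infty}(A; \bsym{a}) \ot_A M$ may be computed on $I$ via K-flatness; this is a helpful clarification but not a different argument.
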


\begin{proof}
It is enough to consider a K-injective complex $M = I$. 
We define $v^{\mrm{R}}_{\bsym{a}, I} := v_{\bsym{a}, I}$ as in (\ref{eqn:4.10}).
Due to equation (\ref{eqn:222}) the morphism $v^{\mrm{R}}_{\bsym{a}, I}$
satisfies the parallel derived equation.  
By Corollary \ref{cor:250} the morphism $v^{\mrm{R}}_{\bsym{a}, I}$ is an
isomorphism in $\cat{D}(\cat{Mod} A)$.
\end{proof}

The corollary says that the diagram
\begin{equation}
\UseTips \xymatrix @C=7ex @R=6ex {
\mrm{R} \Gamma_{\a} (M)
\ar[r]^(0.4){v_M^{\mrm{R}}}
\ar[dr]_{\sigma^{\mrm{R}}_M}
& 
\opn{K}^{\vee}_{\infty}(A; \bsym{a}) \otimes_A M
\ar[d]^{e_{\bsym{a}, \infty}^{\vee} \ot 1_M}
\\
&
M
} 
\end{equation}
 in $\cat{D}(\cat{Mod} A)$ is commutative.

\begin{cor} \label{cor:260}
Let $\a$ be a weakly proregular ideal in A. 
Then the functor $\mrm{R} \Gamma_{\a}$ has finite cohomological dimension.
More precisely, if $\a$ can be generated by a weakly proregular sequence of
length $n$, then the cohomological dimension of $\mrm{R} \Gamma_{\a}$ is at most
$n$. 
\end{cor}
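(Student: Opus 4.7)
The plan is to reduce the claim to a statement about tensoring with a specific bounded complex of flat modules, namely the infinite dual Koszul complex. By Corollary \ref{cor:kosz.1}, for a weakly proregular sequence $\bsym{a} = (a_1, \ldots, a_n)$ generating $\a$ and for every $M \in \cat{D}(\cat{Mod} A)$, there is a functorial isomorphism
\[ \mrm{R} \Gamma_{\a} (M) \cong K \otimes_A M , \quad K := \opn{K}^{\vee}_{\infty}(A; \bsym{a}) . \]
From the formulas (\ref{eqn:202}) and (\ref{eqn:210}), the complex $K$ consists of flat $A$-modules concentrated in degrees $[0, n]$. In particular $K$ is a bounded complex of flats, so it is K-flat, and the ordinary tensor product $K \otimes_A M$ already computes the derived tensor product for every $M$.

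Thus it suffices to show that the functor $F := K \otimes_A (-) : \cat{D}(\cat{Mod} A) \to \cat{D}(\cat{Mod} A)$ has cohomological dimension at most $n$ with shift $s = n$. First I would verify this on a single module. For $M \in \cat{Mod} A$ regarded as a complex in degree $0$, the complex $F(M) = K \otimes_A M$ lives in degrees $[0, n]$, so $\mrm{H}^i(F(M)) = 0$ outside this range. Consequently the bounds
\[ \sup \mrm{H}(F(M)) \leq \sup \mrm{H}(M) + n \quad \text{and} \quad \inf \mrm{H}(F(M)) \geq \inf \mrm{H}(M) \]
hold whenever $M$ is concentrated in a single degree.

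To pass to an arbitrary $M \in \cat{D}(\cat{Mod} A)$, I would invoke the standard way-out argument: since $F$ is a triangulated functor and $K$ is a bounded complex of flats, the bounds propagate to bounded complexes by induction on amplitude using the stupid truncation triangles $\opn{stt}^{\leq i}(M) \to M \to \opn{stt}^{> i}(M) \distri$, and then to unbounded complexes by applying the same truncations together with the fact that $\mrm{H}^i(F(M))$ depends only on $\opn{stt}^{[i-n-1, i+1]}(M)$ (compare the argument used in Proposition \ref{prop:1.1}). The resulting estimates are exactly the cohomological-dimension bounds with $s = n$.

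There is no real obstacle here; the only care needed is in the passage from bounded to unbounded complexes, which is purely formal once the bounded case is established and one exploits that $K$ is a bounded K-flat complex. So the proof reduces to recording the shape of the infinite dual Koszul complex and citing Corollary \ref{cor:kosz.1}.
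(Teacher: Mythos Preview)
Your proof is correct and follows essentially the same approach as the paper: invoke Corollary \ref{cor:kosz.1} to identify $\mrm{R}\Gamma_{\a}(M)$ with $\opn{K}^{\vee}_{\infty}(A;\bsym{a}) \otimes_A M$, and then observe that this complex has amplitude $n$. The paper's proof is terser---it simply notes the amplitude of $\opn{K}^{\vee}_{\infty}(A;\bsym{a})$ and leaves the way-out argument implicit---but the substance is identical.
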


\begin{proof}
Choose any generating sequence 
$\bsym{a} = (a_1, \ldots, a_n)$ for $\a$. 
By Corollary \ref{cor:kosz.1} there is an isomorphism 
$\mrm{R} \Gamma_{\a} (M) \cong \opn{K}^{\vee}_{\infty}(A; \bsym{a}) \otimes_A M$
for any $M \in \cat{D}(\cat{Mod} A)$. But the amplitude of the complex 
$\opn{K}^{\vee}_{\infty}(A; \bsym{a})$ is $n$ (if $A$ is nonzero).
\end{proof}

\begin{lem} \label{lem:260}
For a finite sequence $\bsym{a}$ of elements of $A$, the
homomorphisms
\[ e_{\bsym{a}, \infty}^{\vee} \ot 1 , \, 
1 \otimes e_{\bsym{a}, \infty}^{\vee} :
\opn{K}^{\vee}_{\infty}(A; \bsym{a}) \otimes_A \opn{K}^{\vee}_{\infty}(A;
\bsym{a}) 
\to \opn{K}^{\vee}_{\infty}(A; \bsym{a}) \]
are quasi-isomorphisms.
\end{lem}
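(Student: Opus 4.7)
The plan is to reduce to the case of a single element, then compute explicitly. Using the tensor product decomposition (\ref{eqn:210}), together with the fact that the augmentation respects tensor products (i.e.\ $e_{\bsym{a}, \infty}^{\vee} = e_{a_1, \infty}^{\vee} \ot \cdots \ot e_{a_n, \infty}^{\vee}$ under the identification (\ref{eqn:210})), we may rearrange tensor factors to obtain a commutative diagram
\[
\opn{K}^{\vee}_{\infty}(A; \bsym{a}) \ot_A \opn{K}^{\vee}_{\infty}(A; \bsym{a})
\ \cong\ \botimes_{i=1}^{n} \bigl( \opn{K}^{\vee}_{\infty}(A; a_i) \ot_A \opn{K}^{\vee}_{\infty}(A; a_i) \bigr)
\]
and under this rearrangement $e_{\bsym{a},\infty}^{\vee} \ot 1$ corresponds to $\botimes_{i=1}^{n} (e_{a_i,\infty}^{\vee} \ot 1)$. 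Since each $\opn{K}^{\vee}_{\infty}(A; a_i)$ is a bounded complex of flat $A$-modules (by (\ref{eqn:202})), a tensor product of quasi-isomorphisms between such complexes is again a quasi-isomorphism. It therefore suffices to prove the result for a single element $a$.

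For a single element $a \in A$, by (\ref{eqn:202}) there is a short exact sequence of complexes
\[
0 \to A_a[-1] \to \opn{K}^{\vee}_{\infty}(A; a) \xar{e_{a,\infty}^{\vee}} A \to 0 ,
\]
where $A_a := A[a^{-1}]$ sits in degree $1$, and $A$ is the augmentation quotient in degree $0$. Tensoring on the right with $\opn{K}^{\vee}_{\infty}(A; a)$ (which has flat components) yields a short exact sequence
\[
0 \to A_a[-1] \ot_A \opn{K}^{\vee}_{\infty}(A; a) \to \opn{K}^{\vee}_{\infty}(A; a) \ot_A \opn{K}^{\vee}_{\infty}(A; a) \xar{e_{a,\infty}^{\vee} \ot 1} \opn{K}^{\vee}_{\infty}(A; a) \to 0 .
\]
A direct computation shows $A_a \ot_A \opn{K}^{\vee}_{\infty}(A; a)$ is the two-term complex $A_a \xar{\opn{id}} A_a$ (the map $A \to A_a$ becomes invertible after tensoring with $A_a$), which is contractible; hence the leftmost term is acyclic, and the long exact cohomology sequence shows $e_{a,\infty}^{\vee} \ot 1$ is a quasi-isomorphism. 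Tensoring on the left gives the symmetric argument for $1 \ot e_{a,\infty}^{\vee}$.

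The main (mild) obstacle is bookkeeping: verifying that under the canonical commutativity isomorphism rearranging the $2n$ tensor factors, the map $e_{\bsym{a},\infty}^{\vee} \ot 1$ really does correspond to $\botimes_i (e_{a_i,\infty}^{\vee} \ot 1)$ with the correct Koszul signs, and that the local acyclicity computation $A_a \ot_A \opn{K}^{\vee}_{\infty}(A; a) \simeq 0$ is carried out with the correct differential. Neither is difficult, but both require some care with signs. No weak proregularity hypothesis is used, which is consistent with the statement.
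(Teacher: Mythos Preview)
Your proof is correct and follows essentially the same approach as the paper: reduce to the case $n=1$ using the tensor decomposition $e_{\bsym{a},\infty}^{\vee} = e_{a_1,\infty}^{\vee} \ot \cdots \ot e_{a_n,\infty}^{\vee}$ and K-flatness of the $\opn{K}^{\vee}_{\infty}(A;a_i)$, then for a single element show that the kernel of the surjection is the acyclic two-term complex $A_a \xar{\mrm{id}} A_a$. The paper phrases the $n=1$ step as a direct kernel computation for $1 \ot e_{a,\infty}^{\vee}$ rather than via your short exact sequence for $e_{a,\infty}^{\vee} \ot 1$, but this is the same argument.
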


\begin{proof}
By symmetry it is enough to look only at 
\[ 1 \otimes e_{\bsym{a}, \infty}^{\vee} : 
\opn{K}^{\vee}_{\infty}(A; \bsym{a}) \otimes_A 
\opn{K}^{\vee}_{\infty}(A; \bsym{a})  \to 
\opn{K}^{\vee}_{\infty}(A; \bsym{a})  . \]
Write $\bsym{a} = (a_1, \ldots, a_n)$. 
Since 
$e_{\bsym{a}, \infty}^{\vee} = e_{a_1, \infty}^{\vee} \ot \cdots \ot 
e_{a_n, \infty}^{\vee}$,
and since the complexes 
$\opn{K}^{\vee}_{\infty}(A; a_i)$ are K-flat, 
it is enough to consider the case $n = 1$ and $a = a_1$. 
Here we have a surjective homomorphism of complexes 
\[ 1 \otimes e_{a, \infty}^{\vee} : 
\opn{K}^{\vee}_{\infty}(A; a) \otimes_A 
\opn{K}^{\vee}_{\infty}(A; a)  \to 
\opn{K}^{\vee}_{\infty}(A; a)  . \]
The kernel is the complex 
$ A[a^{-1}] \xar{\d} A[a^{-1}]$,
concentrated in degrees $1, 2$; and it is acyclic. 
\end{proof}

\begin{cor} \label{cor:11}
Let $\a$ be a weakly proregular ideal in a ring $A$.
For any $M \in \cat{D}(\cat{Mod} A)$ the morphism 
\[ \sigma^{\mrm{R}}_{\mrm{R} \Gamma_{\a} (M)} : 
\mrm{R} \Gamma_{\a} (\mrm{R} \Gamma_{\a} (M)) \to \mrm{R} \Gamma_{\a}  (M) \]
is an isomorphism. Thus the functor 
\[ \mrm{R} \Gamma_{\a} : \cat{D}(\cat{Mod} A) \to \cat{D}(\cat{Mod} A) \]
is idempotent.
\end{cor}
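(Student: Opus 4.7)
The plan is to transport the statement, via the isomorphism of Corollary \ref{cor:kosz.1}, into a question about the infinite dual Koszul complex, and then apply Lemma \ref{lem:260}. Fix a weakly proregular sequence $\bsym{a} = (a_1, \ldots, a_n)$ generating $\a$, and abbreviate $K := \opn{K}^{\vee}_{\infty}(A; \bsym{a})$ and $e := e_{\bsym{a}, \infty}^{\vee} : K \to A$. Since $K$ is a bounded complex of flat $A$-modules it is K-flat, and so is $K \otimes_A K$.

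First I would apply Corollary \ref{cor:kosz.1} twice. For any $N \in \cat{D}(\cat{Mod} A)$ there is a functorial isomorphism $v^{\mrm{R}}_{\bsym{a},N} : \mrm{R} \Gamma_{\a} (N) \iso K \ot_A N$, satisfying $(e \ot 1_N) \circ v^{\mrm{R}}_{\bsym{a},N} = \sigma^{\mrm{R}}_N$. Specializing this to $N := \mrm{R} \Gamma_{\a}(M)$ and composing with $1_K \ot v^{\mrm{R}}_{\bsym{a},M}$, I obtain isomorphisms
\[
\mrm{R} \Gamma_{\a} \bigl( \mrm{R} \Gamma_{\a}(M) \bigr) \iso K \ot_A \mrm{R} \Gamma_{\a}(M) \iso K \ot_A K \ot_A M
\]
in $\cat{D}(\cat{Mod} A)$; the second isomorphism is legitimate because $K$ is K-flat, so tensoring with $K$ sends the quasi-isomorphism $v^{\mrm{R}}_{\bsym{a},M}$ to a quasi-isomorphism.

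Next I would chase the morphism $\sigma^{\mrm{R}}_{\mrm{R} \Gamma_{\a}(M)}$ through these identifications. Applying the displayed formula above to $N = \mrm{R} \Gamma_{\a}(M)$, the composite
\[
K \ot_A K \ot_A M \iso K \ot_A \mrm{R} \Gamma_{\a}(M) \iso \mrm{R} \Gamma_{\a} \bigl( \mrm{R} \Gamma_{\a}(M) \bigr) \xar{\sigma^{\mrm{R}}_{\mrm{R} \Gamma_{\a}(M)}} \mrm{R} \Gamma_{\a}(M) \iso K \ot_A M
\]
is, by the functoriality built into Corollary \ref{cor:kosz.1}, equal to $(e \ot 1_K) \ot 1_M$.

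Finally, Lemma \ref{lem:260} tells us that $e \ot 1_K : K \ot_A K \to K$ is a quasi-isomorphism between K-flat complexes, and hence its cone is an acyclic K-flat complex. Tensoring this cone over $A$ with $M$ (for any $M \in \cat{D}(\cat{Mod} A)$) yields an acyclic complex, so $(e \ot 1_K) \ot 1_M$ is a quasi-isomorphism. Therefore $\sigma^{\mrm{R}}_{\mrm{R} \Gamma_{\a}(M)}$ is an isomorphism in $\cat{D}(\cat{Mod} A)$, which establishes idempotence of $\mrm{R} \Gamma_{\a}$. The only subtle point, and the one I would state most carefully, is the compatibility ensuring that $\sigma^{\mrm{R}}_{\mrm{R} \Gamma_{\a}(M)}$ really corresponds to $(e \ot 1_K) \ot 1_M$ under the twofold application of $v^{\mrm{R}}$; this is the only place where one must be attentive, but it follows directly from the functoriality and the identity $(e \ot 1) \circ v^{\mrm{R}} = \sigma^{\mrm{R}}$ in Corollary \ref{cor:kosz.1}.
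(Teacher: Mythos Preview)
Your proposal is correct and follows essentially the same route as the paper: identify $\sigma^{\mrm{R}}_{\mrm{R}\Gamma_{\a}(M)}$ with $e^{\vee}_{\bsym{a},\infty} \otimes 1_{\opn{K}^{\vee}_{\infty}} \otimes 1_M$ via Corollary \ref{cor:kosz.1}, and then invoke Lemma \ref{lem:260}. You are in fact more careful than the paper on one point: you explain why the quasi-isomorphism $e \otimes 1_K$ of Lemma \ref{lem:260} survives tensoring with an arbitrary $M$ (its cone is acyclic and K-flat), whereas the paper leaves this implicit.
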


\begin{proof}
By Corollary \ref{cor:kosz.1} we can replace 
$\sigma^{\mrm{R}}_{\mrm{R} \Gamma_{\a} (M)}$
with 
\[ e_{\bsym{a}, \infty}^{\vee} \ot 1_{\opn{K}^{\vee}_{\infty}} \ot 1_M :
\opn{K}^{\vee}_{\infty}(A; \bsym{a}) \ot_A \opn{K}^{\vee}_{\infty}(A; \bsym{a})
\ot_A M \to 
\opn{K}^{\vee}_{\infty}(A; \bsym{a}) \ot_A M , \]
where $\bsym{a}$ is any weakly proregular sequence generating $\a$.
Lemma \ref{lem:260} says that this is a quasi-isomorphism.
\end{proof}

\begin{cor} \label{cor:15}
The subcategory $\cat{D}(\cat{Mod} A)_{\a \tup{-tor}}$ is the essential image
of the functor
\[ \mrm{R} \Gamma_{\a} : \cat{D}(\cat{Mod} A) \to \cat{D}(\cat{Mod} A) . \]
\end{cor}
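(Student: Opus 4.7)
The plan is to prove the two inclusions of essential images directly from Definition \ref{dfn:8} and Corollary \ref{cor:11}, since both directions now reduce to simple bookkeeping.

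First, I would show that the essential image of $\mrm{R}\Gamma_\a$ is contained in $\cat{D}(\cat{Mod} A)_{\a\tup{-tor}}$. Given any $N$ isomorphic to $\mrm{R}\Gamma_\a(M)$ for some $M \in \cat{D}(\cat{Mod} A)$, it suffices (by Definition \ref{dfn:8}(1)) to verify that $\sigma^{\mrm{R}}_N : \mrm{R}\Gamma_\a(N) \to N$ is an isomorphism. Transporting the isomorphism $N \cong \mrm{R}\Gamma_\a(M)$ and using functoriality of $\sigma^{\mrm{R}}$, this reduces to showing that $\sigma^{\mrm{R}}_{\mrm{R}\Gamma_\a(M)} : \mrm{R}\Gamma_\a(\mrm{R}\Gamma_\a(M)) \to \mrm{R}\Gamma_\a(M)$ is an isomorphism. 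But this is exactly the content of Corollary \ref{cor:11} (which is where the weak proregularity hypothesis enters, via the Koszul-theoretic computation of $\mrm{R}\Gamma_\a$).

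For the reverse inclusion, I would simply observe that if $N \in \cat{D}(\cat{Mod} A)_{\a\tup{-tor}}$, then by definition the canonical morphism $\sigma^{\mrm{R}}_N : \mrm{R}\Gamma_\a(N) \to N$ is itself an isomorphism, which directly exhibits $N$ as an object (up to isomorphism) in the essential image of $\mrm{R}\Gamma_\a$.

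There is no real obstacle here: all the work has been done in Corollary \ref{cor:11}, whose proof in turn rests on the Koszul identification of Corollary \ref{cor:kosz.1} and the tensor-product computation in Lemma \ref{lem:260}. The only thing one must be slightly careful about is that the triangle relating the two morphisms $\sigma^{\mrm{R}}_N$ coming from the two identifications $N \cong \mrm{R}\Gamma_\a(M)$ commutes, which follows from functoriality of the natural transformation $\sigma^{\mrm{R}}$.
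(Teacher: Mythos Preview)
Your proposal is correct and matches the paper's approach exactly: the paper's proof is the one-line ``Clear from Corollary \ref{cor:11},'' and you have simply spelled out what that means---using idempotence for one inclusion and the definition of $\cat{D}(\cat{Mod} A)_{\a\tup{-tor}}$ for the other.
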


\begin{proof}
Clear from Corollary \ref{cor:11}.
\end{proof}

\begin{cor} \label{cor:17}
There is equality 
\[ \cat{D}(\cat{Mod} A)_{\a \tup{-tor}} = 
\cat{D}_{\a \tup{-tor}}(\cat{Mod} A) . \]
In other words, a complex $M$ is cohomologically $\a$-torsion if and only if
all its cohomology modules $\opn{H}^i(M)$ are $\a$-torsion.
\end{cor}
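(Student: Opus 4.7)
The plan is as follows. One direction, $\cat{D}(\cat{Mod} A)_{\a\tup{-tor}} \subset \cat{D}_{\a\tup{-tor}}(\cat{Mod} A)$, is already recorded in (\ref{eqn:54}). So I take $M \in \cat{D}_{\a\tup{-tor}}(\cat{Mod} A)$ and show that $\sigma^{\mrm{R}}_M : \mrm{R}\Ga_{\a}(M) \to M$ is an isomorphism. Fix a weakly proregular generating sequence $\bsym{a} = (a_1, \ldots, a_n)$ for $\a$; by Corollary \ref{cor:kosz.1} the task becomes to show that $e^{\vee}_{\bsym{a}, \infty} \ot 1_M : \opn{K}^{\vee}_{\infty}(A; \bsym{a}) \ot_A M \to M$ is a quasi-isomorphism. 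First I would treat the case where $M = N$ is a single $\a$-torsion module in degree $0$: every element of $N$ is killed by some power of $\a$ and hence by some power of each $a_i$, so $N[a_i^{-1}] = 0$. Using (\ref{eqn:210}) together with the explicit description (\ref{eqn:202}) and tensoring iteratively, one sees that $\opn{K}^{\vee}_{\infty}(A; \bsym{a}) \ot_A N = N$ concentrated in degree $0$, and $e^{\vee}_{\bsym{a},\infty} \ot 1_N$ becomes the identity of $N$.

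Next I would extend to bounded complexes $M \in \cat{D}_{\a\tup{-tor}}(\cat{Mod} A)$ by induction on the amplitude of $\mrm{H}(M)$. Since $\cat{Mod}_{\a\tup{-tor}} A$ is a thick abelian subcategory of $\cat{Mod} A$, the smart truncations $\tau^{\leq k}M$ and $\tau^{>k}M$ remain in $\cat{D}_{\a\tup{-tor}}(\cat{Mod} A)$. The triangle $\tau^{\leq k}M \to M \to \tau^{>k}M \distri$, together with the triangulated nature of the class of complexes $M$ for which $\sigma^{\mrm{R}}_M$ is an isomorphism, gives the inductive step. The base case (amplitude zero, i.e.\ a shift of a single $\a$-torsion module) is provided by the preceding paragraph.

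For a general unbounded $M \in \cat{D}_{\a\tup{-tor}}(\cat{Mod} A)$, I would invoke Corollary \ref{cor:260}, which says that $\mrm{R}\Ga_{\a}$ has finite cohomological dimension, and is in particular way-out in both directions. To check that $\mrm{H}^i(\sigma^{\mrm{R}}_M)$ is an isomorphism for a fixed $i \in \Z$, choose integers $j \ll i \ll k$ (their distance from $i$ dictated by the cohomological dimension $n$ and its shift) and set $M' := \tau^{\geq j}\tau^{\leq k} M$. Then $M'$ is bounded, lies in $\cat{D}_{\a\tup{-tor}}(\cat{Mod} A)$, and satisfies $\mrm{H}^i(M') \cong \mrm{H}^i(M)$; applying $\mrm{R}\Ga_{\a}$ to the two truncation triangles and using the long exact sequences, the finite cohomological dimension of $\mrm{R}\Ga_{\a}$ forces $\mrm{H}^i(\mrm{R}\Ga_{\a}(M')) \cong \mrm{H}^i(\mrm{R}\Ga_{\a}(M))$ as well. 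Naturality of $\sigma^{\mrm{R}}$ then reduces to the bounded case. The main obstacle is precisely this way-out bookkeeping: one must use \emph{both} the upper and the lower bound from Corollary \ref{cor:260} so that widening the window beyond $[j,k]$ leaves cohomology in degree $i$ unchanged on both sides of $\sigma^{\mrm{R}}$.
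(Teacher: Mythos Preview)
Your proof is correct and follows essentially the same route as the paper's. The paper invokes Corollary~\ref{cor:kosz.1} to replace $\sigma^{\mrm{R}}_M$ by $e^{\vee}_{\bsym{a},\infty}\ot 1_M$, then cites the way-out argument of \cite[Proposition I.7.1]{RD} (which is precisely your induction on amplitude plus truncation-window reduction, using the finite cohomological dimension from Corollary~\ref{cor:260}) to reduce to a single $\a$-torsion module, where it observes that $\opn{K}^{\vee}_{\infty}(A;\bsym{a})^i\ot_A M=0$ for $i>0$; your argument simply unpacks that citation.
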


This result is similar to \cite[Proposition 5.2.1(a)]{AJL2}.

\begin{proof}
One inclusion is clear -- see (\ref{eqn:54}). 
For the other direction, we have to show that if
$M \in \cat{D}_{\a \tup{-tor}}(\cat{Mod} A)$ then 
$\sigma^{\mrm{R}}_M $ is an isomorphism.
By Corollary \ref{cor:kosz.1} we can replace 
$\sigma^{\mrm{R}}_{M}$ with 
\[ e_{\bsym{a}, \infty}^{\vee} \ot 1_M :
\opn{K}^{\vee}_{\infty}(A; \bsym{a}) \ot_A M \to M , \]
where $\bsym{a}$ is any weakly proregular sequence generating $\a$.
The way-out argument of \cite[Proposition I.7.1]{RD}
says we can assume $M$ is a single $\a$-torsion module. But then
$\opn{K}^{\vee}_{\infty}(A; \bsym{a})^i \ot_A M = 0$
for all $i > 0$, so 
$e_{\bsym{a}, \infty}^{\vee} \ot 1_M$ 
is an isomorphism of complexes.
\end{proof}

Here is an example showing that a similar statement for cohomologically
complete complexes is false.

\begin{exa} \label{exa:1}
Let $A := \K[[t]]$, the power series ring in the variable $t$ over a field
$\K$, and $\a := (t)$. 
As shown in \cite[Example 3.20]{Ye2}, there is a complex 
\[ P = \bigl( \cdots \to 0 \to  P^{-1} \xar{\d}  P^{0} \to 0 \to \cdots \bigr)
\]
in which $P^{-1}$ and $P^0$ are $\a$-adically free $A$-modules
(both are $\a$-adic completions of a countable rank free $A$-module),
$\mrm{H}^{-1} (P) = 0$, and the module $\mrm{H}^0 (P)$ {\em is not
$\a$-adically complete}. 

On the other hand, the complex $P$ {\em is $\a$-adically cohomologically
complete}. To see this, we note that the $A$-modules $P^i$ are flat (see
\cite[Theorem 3.4]{Ye2}),
and hence by Proposition \ref{prop:26} the morphism 
$\xi_P : \mrm{L} \Lambda_{\a} (P) \to \Lambda_{\a} (P)$ is an isomorphism. On
the other hand the modules $P^i$ are $\a$-adically complete, so 
$\tau_P : P \to \Lambda_{\a} (P)$ is an isomorphism. Therefore 
$\tau^{\mrm{L}}_P : P \to \mrm{L} \Lambda_{\a} (P)$
is an isomorphism.
\end{exa}

\begin{thm} \label{thm:253}
If $A$ is noetherian, then every finite sequence in $A$ is weakly proregular,
so that every ideal in $A$ is weakly proregular.
\end{thm}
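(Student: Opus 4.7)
\medskip
\noindent \textbf{Plan of proof.} The second assertion will follow at once from the first: any ideal in a noetherian ring is finitely generated, hence generated by a finite sequence, which is weakly proregular by the first assertion. So I focus on the first: every finite sequence $\bsym{a} = (a_1, \ldots, a_n)$ in a noetherian $A$ is weakly proregular.

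The plan is to verify the equivalent condition of Theorem~\ref{thm:252}: for every injective $A$-module $I$ and every $k > 0$, one has $\mrm{H}^k(\opn{K}^{\vee}_{\infty}(A; \bsym{a}) \otimes_A I) = 0$. Since $A$ is noetherian, Matlis's structure theorem expresses any injective as a direct sum of indecomposable injectives of the form $E(A/\p)$, $\p \in \opn{Spec} A$. Tensor products and cohomology commute with direct sums, so this reduces the task to the case $I = E(A/\p)$ for a single prime $\p$.

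I then split into two cases according to whether $\a := (\bsym{a}) \subseteq \p$. If $\a \not\subseteq \p$, pick some $a_l \notin \p$. Since $E(A/\p)$ is an $A_{\p}$-module, $a_l$ acts as a unit, so the natural map $E(A/\p) \to E(A/\p)[a_l^{-1}]$ is an isomorphism, making $\opn{K}^{\vee}_{\infty}(A; a_l) \otimes_A E(A/\p)$ acyclic. Tensoring this acyclic complex with the K-flat complex $\bigotimes_{j \neq l} \opn{K}^{\vee}_{\infty}(A; a_j)$ (via \eqref{eqn:210}) shows that $\opn{K}^{\vee}_{\infty}(A; \bsym{a}) \otimes_A E(A/\p)$ is itself acyclic.

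The case $\a \subseteq \p$ is where the main obstacle lies. Writing $E := E(A/\p)$ and $E_s := \{\, e \in E \mid \p^s e = 0 \,\}$, the key Matlis-theoretic fact is that every element of $E = E_{A_{\p}}(\kappa(\p))$ is annihilated by some power of $\p$, so $E = \bigcup_s E_s$. Since $\a \subseteq \p$ we have $a_l^s \in \p^s$, so each $a_l$ acts nilpotently on $E_s$; in particular $E_s[a_l^{-1}] = 0$. Hence $\opn{K}^{\vee}_{\infty}(A; a_l) \otimes_A E_s = (E_s \to 0)$ is quasi-isomorphic to $E_s$ concentrated in degree $0$. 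Iterating, using K-flatness of each $\opn{K}^{\vee}_{\infty}(A; a_j)$, one obtains that $\opn{K}^{\vee}_{\infty}(A; \bsym{a}) \otimes_A E_s$ has cohomology $E_s$ in degree $0$ and $0$ elsewhere. Passing to the direct limit $E = \varinjlim E_s$, and using that direct limits commute with cohomology and with tensor products, yields the required vanishing of $\mrm{H}^k$ for $k > 0$.
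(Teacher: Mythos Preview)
Your proof is correct and follows essentially the same approach as the paper: reduce via Theorem~\ref{thm:252} to injectives, use Matlis's structure theory to reduce to $I = E(A/\p)$, and split according to whether $\a \subseteq \p$. The one difference is that in the case $\a \subseteq \p$ the paper is more direct: since $E(A/\p)$ is $\p$-torsion and each $a_i \in \p$, one has $A[a_i^{-1}] \otimes_A E(A/\p) = 0$ outright, so $\opn{K}^{\vee}_{\infty}(A; \bsym{a})^k \otimes_A E(A/\p) = 0$ for all $k > 0$ and the complex is literally concentrated in degree $0$; your filtration by the $E_s$ and the iterated K-flatness argument are unnecessary (though not wrong).
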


\begin{proof}
It is enough to prove that every finite sequence 
$\bsym{a} = (a_1, \ldots, a_n)$ is weakly proregular. In view of Theorem
\ref{thm:252}, it suffices to prove that for any injective module $I$ and any 
$k > 0$ the $A$-module 
$\mrm{H}^k (\opn{K}^{\vee}_{\infty}(A; \bsym{a}) \otimes_A I)$
is zero. 

We use the structure theory for injective modules over noetherian rings. 
Because cohomology and tensor product commute with infinite direct sums, 
it suffices to consider an indecomposable injective $A$-module; so 
assume $I$ is the injective hull of $A / \p$ for some prime ideal $\p$. This is
a $\p$-torsion module, and also an $A_{\p}$-module. 

If $\a \subset \p$ then each $a_i \in \p$, so 
$A[a_i^{-1}] \otimes_A I = 0$. This says that 
$\opn{K}^{\vee}_{\infty}(A; \bsym{a})^k \otimes_A I = 0$
for all $k > 0$. 

Next assume that $\a \not\subset \p$. Then for at least one index $i$ we have
$a_i \notin \p$, so that $a_i$ is invertible in $A_{\p}$.
This implies that the homomorphism 
\[ \opn{K}^{\vee}_{\infty}(A; a_i)^0 \otimes_A I \to 
\opn{K}^{\vee}_{\infty}(A; a_i)^1 \otimes_A I  \]
is bijective. So the complex 
$\opn{K}^{\vee}_{\infty}(A; a_i) \otimes_A I$
is acyclic. Now 
\[ \opn{K}^{\vee}_{\infty}(A; \bsym{a}) \otimes_A I \cong 
\opn{K}^{\vee}_{\infty}(A; \bsym{b}) \otimes_A  
\opn{K}^{\vee}_{\infty}(A; a_i) \otimes_A I , \]
where $\bsym{b}$ is the subsequence of $\bsym{a}$ obtained by deleting $a_i$. 
Therefore the complex 
$\opn{K}^{\vee}_{\infty}(A; \bsym{a}) \otimes_A I$ is acyclic. 
\end{proof}

Here is a pretty natural example of a weakly proregular sequence in a
non-noetherian ring. There is a follow-up in Example \ref{exa:280}.

\begin{exa} \label{exa:251}
Let $\K$ be a field, and let $A$ and $B$ be adically complete noetherian
$\K$-algebras, with defining ideals $\a$ and $\b$ respectively. 
Take $C := A \ot_{\K} B$. The ring $C$ is often {\em not
noetherian}. 

This happens for instance if $\K$ has characteristic $0$, and
$A = B := \K[[t]]$, the ring of power series in a variable $t$. 
Let $\mfrak{d} \subset C$ be the kernel of the multiplication map 
$C  = A \otimes_{\K} A \to A$. The ideal $\mfrak{d}$ is not finitely generated. 
To see why, note that $\mfrak{d} / \mfrak{d}^2 \cong \Omega^1_{A / \K}$,
and $L \ot_C \Omega^1_{A / \K} \cong \Omega^1_{L / \K}$,
where $L := \K((t))$. Since $L / \K$ is a separable field extension of infinite
transcendence degree, it follows that the rank of $\Omega^1_{L / \K}$
is infinite. 

Let's return to the general situation above. 
Choose finite generating sequences 
$\bsym{a} = (a_1, \ldots, a_m)$ and $\bsym{b} = (b_1, \ldots, b_n)$ for $\a$
and $\b$ respectively. By Theorem \ref{thm:253} these sequences are weakly
proregular. Consider the sequence 
\[ \bsym{c} := (a_1 \ot 1, \ldots, a_m \ot 1, 1 \ot b_1, \ldots, 1 \ot b_n) \]
in $C$. We claim that this sequence {\em is weakly proregular}. 
The reason is that for every $i$ there is a canonical isomorphism of 
DG algebras 
\[ \mrm{K}(A; \bsym{a}^i) \ot_{\K} \mrm{K}(B; \bsym{b}^i) \cong
\mrm{K}(C; \bsym{c}^i) . \]
By the K\"unneth formula we get isomorphisms of $C$-modules 
\[ \mrm{H}^k (\mrm{K}(C; \bsym{c}^i)) \cong
\bigoplus_{k \leq l \leq 0} 
\mrm{H}^l (\mrm{K}(A; \bsym{a}^i)) \ot_{\K} 
\mrm{H}^{k-l} (\mrm{K}(B; \bsym{b}^{i}))  \]
for every $k \leq 0$, 
compatible with $i$. Thus for every $k < 0$ the inverse system
$\bigl\{ \mrm{H}^k (\mrm{K}(C; \bsym{c}^i)) \bigr\}_{i \in \N}$
is pro-zero.
\end{exa}

\section{The Telescope Complex} 
\label{sec:tel}

The purpose of this section is to prove Theorem \ref{thm:250}. 

Let $A$ be a commutative ring (not necessarily noetherian). For a set $X$ and an
$A$-module $M$ we denote by
$\opn{F}(X, M)$ the set of all functions $f : X \to M$.
This is an $A$-module in the obvious way. We denote by 
$\opn{F}_{\mrm{fin}}(X, M)$ the submodule of $\opn{F}(X, M)$ consisting of
functions with finite support. Note that 
$\opn{F}_{\mrm{fin}}(X, A)$ is a free
$A$-module with basis the delta functions $\delta_x : X \to A$.
(This notation comes from \cite{Ye2}.)

\begin{dfn} \label{dfn:5} 
\begin{enumerate}
\item Given an element $a \in A$, the {\em telescope complex}
$\opn{Tel}(A; a)$ is the complex
\[ \opn{Tel}(A; a) :=
\bigl( \cdots \to 0 \to \opn{F}_{\mrm{fin}}( \N, A) \xar{\d}
\opn{F}_{\mrm{fin}}( \N, A) \to 0 \to \cdots \bigr) \]
concentrated in degrees $0$ and $1$. The differential $\d$ is
\[ \d(\delta_i) := 
\begin{cases}
\delta_0  & \tup{ if } i = 0 , \\
\delta_{i- 1} - a \delta_{i} & \tup{ if } i \geq 1 .
\end{cases} \]

\item Given a sequence $\bsym{a} = (a_1, \ldots, a_n)$ of elements of $A$, we
define
\[ \opn{Tel}(A; \bsym{a}) :=
\opn{Tel}(A; a_1) \otimes_A \cdots \otimes_A \opn{Tel}(A; a_n) . \]
\end{enumerate}
\end{dfn}

Note that $\opn{Tel}(A; \bsym{a})$ is a complex of free $A$-modules,
concentrated in degrees $0, \ldots, n$. 
This complex has an obvious functoriality in $(A; \bsym{a})$. 

Recall that for $j \in \N$ we write 
$[0, j] = \{ 0, \ldots, j \}$.
We view $\mrm{F}([0, j], A)$ as the free submodule of
$\opn{F}_{\mrm{fin}}(\N, A)$ with basis 
$\{ \delta_i \}_{ i  \in [0, j]}$.

Let $j \in \N$.
For any $a \in A$ let $\opn{Tel}_j(A; a)$ be the subcomplex 
\[ \opn{Tel}_j(A; a) :=
\bigl( \cdots \to 0 \to \opn{F}([0, j] , A) \xar{\d}
\opn{F}([0, j] , A) \to 0 \to \cdots \bigr) \]
of $\opn{Tel}(A; a)$.
For the sequence $\bsym{a} = (a_1, \ldots, a_n)$ we define 
\[ \opn{Tel}_j(A; \bsym{a}) :=
\opn{Tel}_j(A; a_1) \otimes_A \cdots \otimes_A \opn{Tel}_j(A; a_n) . \]
This is a subcomplex of $\opn{Tel}(A; \bsym{a})$. It is clear that 
\begin{equation} \label{eqn:214}
\opn{Tel}(A; \bsym{a}) = \bigcup_{j \geq 0} \, \opn{Tel}_j(A; \bsym{a}) . 
\end{equation}

Recall the dual Koszul complex 
$\opn{K}^{\vee}_{}(A; \bsym{a})$ from formula (\ref{eqn:5.2}). 
For any $j \geq 0$ we define a homomorphism of complexes
\begin{equation} \label{eqn:204}
w_{a, j} : \opn{Tel}_j(A; a) \to \opn{K}^{\vee}_{}(A; a^j) 
\end{equation}
as follows, using the presentation (\ref{eqn:203}) of 
$\opn{K}^{\vee}_{}(A; a^j)$. 
In degree $0$ the homomorphism 
\[ w_{a, j}^0 : \opn{Tel}_j(A; a)^0 = \opn{F}([0, j] , A) \to 
\opn{K}^{\vee}_{}(A; a^j)^0 = A \]
is defined to be
\[  w_{a, j}^0(\delta_i) :=
\begin{cases}
1  & \tup{ if } i = 0 , \\
0 & \tup{ if } i \geq 1 .
\end{cases} \]
In degree $1$ the homomorphism 
\[ w_{a, j}^1 : \opn{Tel}^j(A; a)^1 = \opn{F}([0, j] , A) \to 
\opn{K}^{\vee}_{}(A; a^j)^1 = A \]
is defined to be 
$w_{a, j}^1(\delta_i) := a^{j-i}$. 
This makes sense since $i \in [0, j]$.

For a sequence $\bsym{a} = (a_1, \ldots, a_n)$ we define 
\begin{equation}
w_{\bsym{a}, j} :=
w_{a_1, j} \otimes \cdots \otimes  w_{a_n, j} :
\opn{Tel}_j(A; \bsym{a}) \to \opn{K}^{\vee}_{}(A; \bsym{a}^j) . 
\end{equation}
The homomorphisms of complexes $w_{\bsym{a}, j}$ are functorial in $j$, so in
the direct limit we get a homomorphism of complexes 
\begin{equation} \label{eqn:206}
w_{\bsym{a}} := \lim_{j \to} w_{\bsym{a}, j} : \opn{Tel}(A; \bsym{a}) \to 
\opn{K}^{\vee}_{\infty}(A; \bsym{a}) .
\end{equation}
Of course 
$w_{\bsym{a}} =  w_{a_1} \ot \cdots \ot  w_{a_n}$.
Let us also define 
\begin{equation}
u_{\bsym{a}} : \opn{Tel}(A; \bsym{a}) \to A \ , \
u_{\bsym{a}} := e_{\bsym{a}, \infty}^{\vee} \circ w_{\bsym{a}} \ ;
\end{equation}
cf.\ (\ref{eqn:213}).

\begin{lem} \label{lem:201}
The homomorphism $w_{\bsym{a}, j}$ is a homotopy equivalence, and the
homomorphism $w_{\bsym{a}}$ is a quasi-isomorphism.
\end{lem}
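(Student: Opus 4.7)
The plan is to reduce the statement to the case of a single element by exploiting the tensor-product structure, handle that base case by a direct cohomology calculation combined with an abstract K-projectivity argument, and then pass to the filtered colimit in $j$ to deduce the statement about $w_{\bsym{a}}$.

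First, for a single element $a \in A$, I would check that $w_{a, j} : \opn{Tel}_j(A;a) \to \opn{K}^{\vee}(A; a^j)$ is a quasi-isomorphism by computing both sides explicitly. Writing out the differential of $\opn{Tel}_j(A;a)$ in the basis $\delta_0, \ldots, \delta_j$, one sees that a $0$-cocycle has the form $c_0 \bigl( \delta_0 - \sum_{i=1}^{j} a^{i-1} \delta_i \bigr)$ with $a^j c_0 = 0$, so $\mrm{H}^0(\opn{Tel}_j(A;a)) \cong (0 :_A a^j)$; and the image of the differential in degree $1$ imposes the relations $\delta_0 \equiv 0$ and $\delta_{i-1} \equiv a \delta_i$, whence $\mrm{H}^1(\opn{Tel}_j(A;a)) \cong A/(a^j)$, generated by the class of $\delta_j$. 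Since $\opn{K}^{\vee}(A; a^j) = (A \xar{a^j \cdot} A)$ has the same cohomology modules, and a direct inspection of the definitions of $w_{a,j}^0$ and $w_{a,j}^1$ shows that the induced maps $\mrm{H}^0(w_{a,j})$ and $\mrm{H}^1(w_{a,j})$ are the identities on $(0:_A a^j)$ and $A/(a^j)$ respectively, $w_{a,j}$ is a quasi-isomorphism. Since both $\opn{Tel}_j(A;a)$ and $\opn{K}^{\vee}(A; a^j)$ are bounded complexes of free $A$-modules, they are K-projective; so any quasi-isomorphism between them is a homotopy equivalence, because its mapping cone is a K-projective acyclic complex and is therefore contractible.

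For a sequence $\bsym{a} = (a_1, \ldots, a_n)$, one has by construction $w_{\bsym{a}, j} = w_{a_1, j} \ot \cdots \ot w_{a_n, j}$, and the tensor product of homotopy equivalences is a homotopy equivalence, so $w_{\bsym{a}, j}$ is a homotopy equivalence. Finally, passing to the direct limit in $j$ and using that tensor products commute with filtered colimits (so that both the decomposition $\opn{Tel}(A; \bsym{a}) = \bigcup_{j} \opn{Tel}_j(A; \bsym{a})$ from (\ref{eqn:214}) and the defining colimit of $\opn{K}^{\vee}_{\infty}(A; \bsym{a})$ are compatible with the tensor products), the map $w_{\bsym{a}}$ is identified with $\lim_{j \to} w_{\bsym{a}, j}$. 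Since filtered colimits of $A$-modules are exact and hence commute with cohomology, $\mrm{H}^k(w_{\bsym{a}}) = \lim_{j \to} \mrm{H}^k(w_{\bsym{a}, j})$ is an isomorphism for every $k$, so $w_{\bsym{a}}$ is a quasi-isomorphism.

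The main obstacle is the bookkeeping in the cohomology computation of $\opn{Tel}_j(A; a)$; everything else is a formal consequence of this computation together with standard properties of K-projective complexes and filtered colimits.
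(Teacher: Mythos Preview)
Your argument is correct. The overall architecture---handle $n=1$, tensor up, pass to the colimit---matches the paper, but the execution differs in two places.

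For a single element, the paper does not compute $\mrm{H}^0(\opn{Tel}_j(A;a))$ over an arbitrary ring. Instead it works first over the universal ring $\Z[t]$ with $a = t$, where $t$ is a non-zero-divisor and so $\mrm{H}^0$ vanishes on both sides; the general case is then obtained by base change along $\Z[t] \to A$, $t \mapsto a$, using that both complexes are bounded complexes of flat $\Z[t]$-modules. Your direct computation over $A$ (yielding $\mrm{H}^0 \cong (0:_A a^j)$ on both sides) is perfectly valid and avoids the base-change step, at the cost of a slightly longer calculation. For the passage to sequences, the paper tensors quasi-isomorphisms (using K-flatness inductively) and only at the end upgrades to a homotopy equivalence, whereas you upgrade to a homotopy equivalence already for $n=1$ and then use that tensor products of homotopy equivalences are homotopy equivalences; your route is a bit cleaner here. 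The colimit step is identical.
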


\begin{proof}
First consider the case $n = 1$, $A = \Z[t]$, the polynomial ring in the 
variable $t$, and $a = t$. The fact that $w_{t,j}$ is a quasi-isomorphism is an
easy calculation, once we notice that 
\[ \mrm{H}^0 \bigl( \opn{Tel}_j(\Z[t]; t) \bigr) = 
\mrm{H}^0 \bigl( \opn{K}^{\vee}_{}(\Z[t]; t^j) \bigr) = 0 , \]
and 
\[ \mrm{H}^1 \bigl( \opn{Tel}_j(\Z[t]; t) \bigr) \cong 
\mrm{H}^1 \bigl( \opn{K}^{\vee}_{}(\Z[t]; t^j) \bigr) \cong \Z[t] / (t^j) . \]

Next, for any $(A; a)$ we have a ring homomorphism $\Z[t] \to A$ sending 
$t \mapsto a$. Since $w_{a, j}$ is gotten from 
$w_{t, j}$ by the base change $A \ot_{\Z[t]} -$, and since 
$\opn{Tel}_j(\Z[t]; t)$ and $\opn{K}^{\vee}_{}(\Z[t]; t^j)$ are bounded
complexes of flat $\Z[t]$-modules, it follows that $w_{a, j}$ is also a
quasi-isomorphism. 

The flatness argument, with induction, also proves that for sequence $\bsym{a}$
of length $n \geq 2$ the homomorphism $w_{\bsym{a}, j}$ is a quasi-isomorphism.
Because $\opn{Tel}_j(A; \bsym{a})$ and $\opn{K}^{\vee}(A; \bsym{a}^j)$ are
bounded complexes of free $A$-modules, it follows that $w_{\bsym{a}, j}$ is a
homotopy equivalence.

Finally going to the direct limit preserves exactness, so $w_{\bsym{a}}$ is
a quasi-iso\-mor\-phism.
\end{proof}

Warning: the quasi-isomorphism $w_{\bsym{a}}$ is {\em not a homotopy
equivalence} (except in trivial cases). 

\begin{prop} \label{prop:251}
Let $\bsym{a}$ be a weakly proregular sequence in $A$, and $\a$ the ideal
generated by $\bsym{a}$.
For any $M \in \cat{D}(\cat{Mod} A)$ there is an isomorphism
\[ v^{\mrm{R}}_{\bsym{a}, M} : \mrm{R} \Gamma_{\a} (M) \to 
\opn{Tel}(A; \bsym{a}) \otimes_A M \]
in $\cat{D}(\cat{Mod} A)$. The isomorphism $v^{\mrm{R}}_{\bsym{a}, M}$ is
functorial in $M$, and satisfies 
\[ (u_{\bsym{a}} \ot 1_M) \circ v^{\mrm{R}}_{\bsym{a}, M}=
\sigma^{\mrm{R}}_M  \]
as morphisms $\mrm{R} \Gamma_{\a}  (M) \to M$.
\end{prop}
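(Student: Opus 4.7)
The strategy is to reduce this to the Koszul version already proved in Corollary \ref{cor:kosz.1}, by going through the comparison map $w_{\bsym{a}}$ of Lemma \ref{lem:201}.

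First I would observe two K-flatness facts. The telescope complex $\opn{Tel}(A; \bsym{a})$ is a bounded complex of free $A$-modules (by construction in Definition \ref{dfn:5}), hence K-flat. Likewise, $\opn{K}^{\vee}_{\infty}(A; \bsym{a})$ is a bounded complex of flat $A$-modules by (\ref{eqn:210}) and (\ref{eqn:202}), hence also K-flat. Consequently, for any $M \in \cat{D}(\cat{Mod} A)$ the ordinary tensor products $\opn{Tel}(A; \bsym{a}) \otimes_A M$ and $\opn{K}^{\vee}_{\infty}(A; \bsym{a}) \otimes_A M$ agree with the derived tensor products, and a quasi-isomorphism between K-flat complexes induces a quasi-isomorphism after tensoring with any complex.

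Next I would apply Lemma \ref{lem:201}: the homomorphism $w_{\bsym{a}} : \opn{Tel}(A; \bsym{a}) \to \opn{K}^{\vee}_{\infty}(A; \bsym{a})$ is a quasi-isomorphism between K-flat complexes. Hence for every $M \in \cat{D}(\cat{Mod} A)$, the induced map
\[ w_{\bsym{a}} \ot 1_M : \opn{Tel}(A; \bsym{a}) \ot_A M \to \opn{K}^{\vee}_{\infty}(A; \bsym{a}) \ot_A M \]
is an isomorphism in $\cat{D}(\cat{Mod} A)$, functorially in $M$. Denoting by $v^{\mrm{R},\mrm{K}}_{\bsym{a}, M}$ the Koszul isomorphism produced in Corollary \ref{cor:kosz.1}, I would then define
\[ v^{\mrm{R}}_{\bsym{a}, M} := (w_{\bsym{a}} \ot 1_M)^{-1} \circ v^{\mrm{R},\mrm{K}}_{\bsym{a}, M} : \mrm{R} \Gamma_{\a} (M) \to \opn{Tel}(A; \bsym{a}) \ot_A M . \]
This is an isomorphism in $\cat{D}(\cat{Mod} A)$, functorial in $M$ since both constituent isomorphisms are.

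Finally, for the compatibility with $\sigma^{\mrm{R}}_M$, I would use the definition $u_{\bsym{a}} := e_{\bsym{a}, \infty}^{\vee} \circ w_{\bsym{a}}$ together with the corresponding identity for $v^{\mrm{R},\mrm{K}}_{\bsym{a}, M}$ from Corollary \ref{cor:kosz.1}:
\[ (u_{\bsym{a}} \ot 1_M) \circ v^{\mrm{R}}_{\bsym{a}, M} = (e_{\bsym{a}, \infty}^{\vee} \ot 1_M) \circ (w_{\bsym{a}} \ot 1_M) \circ (w_{\bsym{a}} \ot 1_M)^{-1} \circ v^{\mrm{R},\mrm{K}}_{\bsym{a}, M} = \sigma^{\mrm{R}}_M . \]
There is no real obstacle here: all the hard work was done in Lemma \ref{lem:201} (identifying the telescope with the infinite dual Koszul complex up to quasi-isomorphism) and in Corollary \ref{cor:kosz.1} (which used weak proregularity to identify $\mrm{R}\Gamma_{\a}$ with the Koszul tensor product). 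The only thing to be a little careful about is that one genuinely needs both complexes to be K-flat in order to invert $w_{\bsym{a}} \ot 1_M$ for an arbitrary, possibly unbounded, complex $M$.
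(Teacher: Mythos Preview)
Your proof is correct and follows exactly the approach the paper takes: the paper's proof is the one-liner ``Combine Lemma \ref{lem:201} and Corollary \ref{cor:kosz.1},'' and you have simply unpacked what that combination means, including the K-flatness observation needed to tensor the quasi-isomorphism $w_{\bsym{a}}$ with an arbitrary complex $M$ and the verification of the compatibility with $\sigma^{\mrm{R}}_M$ via $u_{\bsym{a}} = e_{\bsym{a}, \infty}^{\vee} \circ w_{\bsym{a}}$.
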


\begin{proof}
Combine Lemma \ref{lem:201} and Corollary \ref{cor:kosz.1}.
\end{proof}

Let us denote by $\a$ the ideal of $A$ generated by the sequence 
$\bsym{a} = (a_1, \ldots, a_n)$.
Recall that $A_j = A / \a^{j+1}$. Since 
$\a^{j n} \subset (\bsym{a}^j) \subset \a^j$
it follows that the canonical homomorphism
\begin{equation} \label{eqn:5.1}
\lim_{\leftarrow j} \, \bigl( A / (\bsym{a}^{j + 1}) \otimes_A M \bigr) \to
\lim_{\leftarrow j} \, \bigl( A_j \otimes_A M \bigr)  = \Lambda_{\a} (M)
\end{equation}
is bijective for any module $M$.

Let us write
\begin{equation}
 \opn{Tel}_j^{\vee}(A; \bsym{a}) := 
\opn{Hom}_A \bigl( \opn{Tel}_j(A; \bsym{a}), A \bigr) .
\end{equation}
We refer it as the {\em dual telescope complex}. 
Note that $\opn{Tel}_j^{\vee}(A; \bsym{a})$ is a complex of finite rank free
$A$-modules, concentrated in degrees $-n, \ldots, 0$.
The dual of the homomorphism $w_{\bsym{a}, j}$ 
is \begin{equation}
w_{\bsym{a}, j}^{\vee} : \opn{K}_{}(A; \bsym{a}^j) 
 \to \opn{Tel}_j^{\vee}(A; \bsym{a}) .
\end{equation}
Since $w_{\bsym{a}, j}$ is a homotopy equivalence, it
follows that $w_{\bsym{a}, j}^{\vee}$ is also a homotopy equivalence. 
Therefore
\[ \mrm{H}^0 (w_{\bsym{a}, j}^{\vee}) : 
\mrm{H}^0 ( \opn{Tel}_j^{\vee}(A; \bsym{a}) ) 
\to \mrm{H}^0 ( \opn{K}_{}(A; \bsym{a}^j) ) \]
is an isomorphism of $A$-modules. Define  
\begin{equation} \label{eqn:216}
\opn{tel}_{\bsym{a}, j} : 
\opn{Tel}_j^{\vee}(A; \bsym{a}) \to A / (\bsym{a}^j)
\end{equation}
to be the unique homomorphism of complexes such that 
\[ \mrm{H}^0 (\opn{tel}_{\bsym{a}, j}) \circ 
\mrm{H}^0 (w_{\bsym{a}, j}^{\vee})^{-1} : 
\mrm{H}^0 ( \opn{K}_{}(A; \bsym{a}^j) ) \to  A / (\bsym{a}^j) \]
is the canonical $A$-algebra  isomorphism (\ref{eqn:217}). 

For any $M \in \cat{C}(\cat{Mod} A)$ and $j \in \N$ 
there is a canonical isomorphism of complexes
\begin{equation} \label{eqn:tel.12}
\opn{Hom}_A \bigl( \opn{Tel}_j(A; \bsym{a}), M \bigr) \cong 
\opn{Tel}^{\vee}_j(A; \bsym{a}) \otimes_A M . 
\end{equation}
There is also  a canonical isomorphism of complexes
\begin{equation} \label{eqn:tel.6}
\opn{Hom}_A \bigl( \opn{Tel}(A; \bsym{a}), M \bigr) \cong \lim_{\leftarrow j} \,
\opn{Hom}_A \bigl( \opn{Tel}_j(A; \bsym{a}), M \bigr) 
\end{equation}
coming from (\ref{eqn:214}).
We define a homomorphism of complexes 
\begin{equation}
\begin{aligned}
\opn{tel}_{\bsym{a}, M, j} & : 
\opn{Hom}_A \bigl( \opn{Tel}_j(A; \bsym{a}), M \bigr) \to 
A / (\bsym{a}^j) \otimes_A M \ ,
\\
\opn{tel}_{\bsym{a}, M, j} & := \opn{tel}_{\bsym{a}, j} \otimes \, 1_M \ ,
\end{aligned}
\end{equation}
using the isomorphism (\ref{eqn:tel.12}).

\begin{dfn} \label{dfn:280}
For any $M \in \cat{C}(\cat{Mod} A)$ let 
\[ \opn{tel}_{\bsym{a}, M} : 
\opn{Hom}_A \bigl( \opn{Tel}(A; \bsym{a}), M \bigr) \to  \Lambda_{\a} (M) \]
be the homomorphism of complexes 
\[ \opn{tel}_{\bsym{a}, M} := 
\lim_{\leftarrow j} \, \opn{tel}_{\bsym{a}, M, j} 
= \lim_{\leftarrow j} \, (\opn{tel}_{\bsym{a}, j} \otimes \, 1_M) \ . \]
Here we use the isomorphisms (\ref{eqn:tel.6}) and (\ref{eqn:5.1}).
\end{dfn}

Note that $\opn{tel}_{\bsym{a}, M}$ is functorial in $M$.

\begin{rem} \label{rem:tel.1}
For a module $M$ the homomorphism 
\[ \opn{tel}_{\bsym{a}, M} :  
\opn{Hom}_A \bigl( \opn{Tel}(A; \bsym{a})^0, M \bigr) \to \La_{\a}(M) \]
can be expressed explicitly as an $\a$-adically convergent power series. First
we note that an element 
$f \in \opn{Hom}_A \bigl( \opn{Tel}(A; \bsym{a})^0, M \bigr)$
is the same as a function $f : \N^n \to M$.
For $a \in A$ and $i \in \N$ we define the ``modified $i$-th power'' 
$p(a, i) \in A$ to be
$p(a, 0) := 1$, $p(a, 1) := -1$ and 
$p(a, i) := -a^{i-1}$ if $i \geq 2$.
Then 
\begin{equation} \label{eqn:tel.13}
 \opn{tel}_{\bsym{a}, M}(f) = \sum_{(i_1, \ldots, i_n) \in \N^n} \ 
p(a_1, i_1) \cdots p(a_n, i_n)  f(i_1, \ldots, i_n) \in 
\Lambda_{\a} (M) .
\end{equation}
We shall not require this formula.
\end{rem}

Consider the homomorphism of complexes 
\begin{equation}
\opn{Hom}(u_{\bsym{a}}, 1_M) : M \cong \opn{Hom}_A(A, M) \to 
\opn{Hom}_A \bigl( \opn{Tel}(A; \bsym{a}), M \bigr) 
\end{equation}
induced by 
$u_{\bsym{a}} : \opn{Tel}(A; \bsym{a}) \to A$. 

\begin{lem} \label{lem:203}
For any $M \in \cat{Mod} A$ there is equality
$\opn{tel}_{\bsym{a}, M} \circ \opn{Hom}(u_{\bsym{a}}, 1_M) = \tau_M$ , 
as homomorphisms 
$M \to \Lambda_{\a}(M)$.
\end{lem}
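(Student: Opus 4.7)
Since both sides of the asserted equality are homomorphisms into the inverse limit $\Lambda_{\a}(M) = \lim_{\leftarrow j}\,(A/(\bsym{a}^{j+1}) \otimes_A M)$ (using the isomorphism (\ref{eqn:5.1})), my plan is to verify the equality at each finite level $j$, by unwinding the definitions. Concretely, for $m \in M$ I will track the image of $m$ through both compositions into $A/(\bsym{a}^{j+1}) \otimes_A M$ and observe that both produce the class $1 \otimes m$.

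The key computational point is the behavior of the various maps in degree $0$. Because $e_{\bsym{a},\infty}^\vee$ is the identity in degree $0$, $u_{\bsym{a}}$ in degree $0$ agrees with $w_{\bsym{a}}^0$; and from the explicit description of $w_{a,j}$ in Definition/formula~(\ref{eqn:204}), one sees that on the basis element $\delta_{\bsym{i}} = \delta_{i_1} \otimes \cdots \otimes \delta_{i_n}$ of $\opn{Tel}(A;\bsym{a})^0$ we have $u_{\bsym{a}}(\delta_{\bsym{i}}) = 1$ if $\bsym{i} = \bsym{0}$ and $u_{\bsym{a}}(\delta_{\bsym{i}}) = 0$ otherwise. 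Thus the element $\opn{Hom}(u_{\bsym{a}},1_M)(m)$ is the function $f_m : \opn{Tel}(A;\bsym{a})^0 \to M$ with $f_m(\delta_{\bsym{0}}) = m$ and $f_m(\delta_{\bsym{i}}) = 0$ for $\bsym{i} \neq \bsym{0}$.

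Next I will restrict to level $j$ and use the canonical isomorphism (\ref{eqn:tel.12}), which identifies $\opn{Hom}_A(\opn{Tel}_j(A;\bsym{a}), M) \cong \opn{Tel}^\vee_j(A;\bsym{a}) \otimes_A M$ by $f \mapsto \sum_{\bsym{i} \in [0,j]^n} \delta^{\bsym{i}} \otimes f(\delta_{\bsym{i}})$. Under this identification, $f_m|_j$ corresponds to $\delta^{\bsym{0}} \otimes m$ in degree~$0$. The homomorphism $\opn{tel}_{\bsym{a}, M, j}$ then sends this to $\opn{tel}_{\bsym{a}, j}(\delta^{\bsym{0}}) \otimes m$, so it remains to check that $\opn{tel}_{\bsym{a}, j}(\delta^{\bsym{0}}) = 1$ in $A/(\bsym{a}^{j+1})$. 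This follows from the defining property of $\opn{tel}_{\bsym{a}, j}$: dualizing $w_{\bsym{a}, j}^0$ (which sends $\delta_{\bsym{0}} \mapsto 1$ and kills the other basis vectors) yields $w_{\bsym{a}, j}^{\vee, 0}(1) = \delta^{\bsym{0}}$, and $\opn{tel}_{\bsym{a}, j}$ was constructed so that $\mrm{H}^0(\opn{tel}_{\bsym{a}, j}) \circ \mrm{H}^0(w_{\bsym{a}, j}^\vee)^{-1}$ is the canonical isomorphism of (\ref{eqn:217}), sending $1 \mapsto 1$.

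Combining these identifications at each $j$ gives $\opn{tel}_{\bsym{a}, M, j} \circ \opn{Hom}(u_{\bsym{a}}, 1_M)(m) = 1 \otimes m$ in $A/(\bsym{a}^{j+1}) \otimes_A M$, and passing to the inverse limit over $j$ yields $\tau_M(m)$, as required. The only real obstacle is keeping the various dualities and sign/basis conventions straight; once one commits to the identification of $f_m|_j$ with $\delta^{\bsym{0}} \otimes m$, the verification reduces to the normalization $\opn{tel}_{\bsym{a}, j}(\delta^{\bsym{0}}) = 1$, which is built into the construction.
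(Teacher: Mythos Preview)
Your proof is correct and follows essentially the same strategy as the paper's: reduce to each finite level $j$, and then verify that the composite is the canonical map by appealing to the defining property of $\opn{tel}_{\bsym{a}, j}$ relative to $w_{\bsym{a}, j}^{\vee}$. The only difference is one of presentation: the paper first observes that functoriality in $M$ reduces the check to $M = A$, and then reads off the equality $\opn{tel}_{\bsym{a}, j} \circ u_{\bsym{a}, j}^{\vee} = f_j$ from a small commutative diagram involving $e_{\bsym{a}, j}$, $w_{\bsym{a}, j}^{\vee}$, and the canonical surjection $g_j : \opn{K}(A;\bsym{a}^j) \to A/(\bsym{a}^j)$; you instead carry a general $m \in M$ through the maps and do the explicit basis computation $\opn{tel}_{\bsym{a}, j}(\delta^{\bsym{0}}) = 1$. (A harmless indexing slip: in the paper's conventions $\opn{tel}_{\bsym{a}, j}$ lands in $A/(\bsym{a}^j)$, not $A/(\bsym{a}^{j+1})$; this does not affect the argument.)
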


\begin{proof}
It suffices to prove that for every $j \geq 0$ there is equality 
\[ \opn{tel}_{\bsym{a}, M,j} \circ \opn{Hom}(u_{\bsym{a}, j}, 1_M)  = 
f_j \circ 1_M \]
as homomorphisms $M \to A / (\bsym{a}^j)$, where 
$f_j : A \to  A / (\bsym{a}^j)$ is the canonical ring homomorphism, and
$u_{\bsym{a}, j} := e_{\bsym{a}, j}^{\vee} \circ w_{\bsym{a} ,j}$.
But everything is functorial in $M$, so we can restrict attention to $M = A$. 
Thus we have to show that 
$\opn{tel}_{\bsym{a}, j} \circ\, u_{\bsym{a}, j}^{\vee} = f_j$.

Consider the diagram
\[ \UseTips \xymatrix @C=7ex @R=7ex {
A
\ar[r]^{e_{\bsym{a}, j}}
\ar[dr]_{u_{\bsym{a}, j}^{\vee}}
\ar@(u,u)[rr]^{f_j}
&
\opn{K}(A; \bsym{a}^j)
\ar[r]^{g_j}
\ar[d]^{w_{\bsym{a} ,j}}
&
A / (\bsym{a}^j)
\\
& 
\opn{Tel}_j^{\vee}(A; \bsym{a})
\ar[ur]_{\opn{tel}_{\bsym{a}, j} }
} \]
where $g_j$ is the DG algebra homomorphism. 
By definition the three triangles are commutative. Hence the whole diagram is
commutative.
\end{proof}

\begin{thm} \label{thm:250}
Let $A$ be any ring, let $\bsym{a}$ be a weakly proregular sequence in $A$, 
and let $P$ be a flat  $A$-module. Then  the homomorphism
\[ \opn{tel}_{\bsym{a}, P} : 
\opn{Hom}_A \bigl( \opn{Tel}(A; \bsym{a}), P \bigr) \to \Lambda_{\a} (P) \]
is a quasi-isomorphism.
\end{thm}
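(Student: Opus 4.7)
The plan is to reduce the statement to a cohomological comparison of two inverse systems of complexes, and then to exploit weak proregularity via a Mittag-Leffler argument. First, I would use the canonical identifications (\ref{eqn:tel.6}), (\ref{eqn:tel.12}) and (\ref{eqn:5.1}) to realize $\opn{tel}_{\bsym{a}, P}$ as the inverse limit, over $j \in \N$, of the maps of complexes
\[ \opn{tel}_{\bsym{a}, P, j} \,=\, \opn{tel}_{\bsym{a}, j} \ot 1_P \,:\, \opn{Tel}_j^{\vee}(A; \bsym{a}) \ot_A P \,\longrightarrow\, A/(\bsym{a}^j) \ot_A P . \]
Both inverse systems have componentwise surjective transition maps: for the target this is clear, and for the source it follows because $\opn{Tel}_j(A; \bsym{a}) \hookrightarrow \opn{Tel}_{j+1}(A; \bsym{a})$ is the inclusion of a graded direct summand of free $A$-modules, so its $\opn{Hom}$-dual is componentwise surjective. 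This componentwise surjectivity is what is needed to apply the Mittag-Leffler short exact sequence relating the cohomology of the inverse limit with the inverse limit (and the $\lim^1$) of the cohomology, invoking the same Mittag-Leffler input used in Lemma \ref{lem:43}.

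Next I would identify the cohomology of the source complexes using three earlier results. By Lemma \ref{lem:201}, $w_{\bsym{a}, j}^{\vee} : \opn{K}(A; \bsym{a}^j) \to \opn{Tel}_j^{\vee}(A; \bsym{a})$ is a homotopy equivalence, which is preserved after tensoring with $P$. Since $P$ is flat, Corollary \ref{cor:293} then yields
\[ \mrm{H}^k \bigl( \opn{Tel}_j^{\vee}(A; \bsym{a}) \ot_A P \bigr) \,\cong\, \mrm{H}^k \bigl( \opn{K}(A; \bsym{a}^j) \bigr) \ot_A P . \]
The defining property (\ref{eqn:216}) of $\opn{tel}_{\bsym{a}, j}$ then translates, under these identifications, into the statement that $\mrm{H}^0(\opn{tel}_{\bsym{a}, P, j})$ is the identity of $A/(\bsym{a}^j) \ot_A P$; in all other degrees the target side is automatically zero.

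Weak proregularity (Definition \ref{dfn:250}) then supplies exactly what is needed: the inverse system $\{\mrm{H}^k(\opn{K}(A; \bsym{a}^j))\}_j$ is pro-zero for every $k < 0$, and pro-zeroness is preserved by the additive functor $-\ot_A P$. Running through the Mittag-Leffler sequences for every $k \in \Z$ on the source gives: in degree $0$, $\lim^1_j \mrm{H}^{-1}$ vanishes by pro-zeroness and $\lim_j \mrm{H}^0 = \lim_j (A/(\bsym{a}^j) \ot_A P) = \Lambda_\a(P)$ by (\ref{eqn:5.1}); in degrees $k < 0$ both $\lim_j$ and $\lim^1_j$ of the $\mrm{H}^{\bullet}$ vanish by pro-zeroness; in degrees $k > 0$ all relevant terms are already zero (with $\lim^1_j$ of the surjective system $\{\mrm{H}^0\}$ contributing zero in degree $k=1$). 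The target side contributes the same values trivially, and the induced map in degree $0$ is the identity of $\Lambda_\a(P)$. Hence $\opn{tel}_{\bsym{a}, P}$ induces an isomorphism on every $\mrm{H}^k$, and is therefore a quasi-isomorphism. The main obstacle I anticipate is bookkeeping rather than any deep idea --- stating and applying the Mittag-Leffler exact sequence for inverse limits of complexes precisely, and checking the compatibility of transition maps between the source and target systems through all the identifications.
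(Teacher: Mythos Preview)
Your proposal is correct and takes essentially the same approach as the paper: identify the source and target as inverse limits, use Lemma \ref{lem:201} and Corollary \ref{cor:293} to compute $\mrm{H}^k\bigl(\opn{Tel}_j^{\vee}(A;\bsym{a})\ot_A P\bigr)\cong \mrm{H}^k\bigl(\opn{K}(A;\bsym{a}^j)\bigr)\ot_A P$, invoke weak proregularity for the pro-zero condition on $k<0$, and then apply the Mittag-Leffler argument (the paper cites the equivalent statement that $\psi^k$ is bijective, while you phrase it via the $\lim^1$ exact sequence). The only cosmetic difference is that the paper verifies bijectivity of $\psi^k$ separately rather than writing out the $\lim^1$ term explicitly.
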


\begin{proof}
Given an inverse system $\{ M_j \}_{j \in \N}$ of complexes of abelian
groups, for every integer $k$ there is a canonical homomorphism
\[ \psi^k : \mrm{H}^k \Bigl( \lim_{\leftarrow j} \, M_j \Bigr) \to 
\lim_{\leftarrow j} \, \bigl( \mrm{H}^k (M_j) \bigr) . \]

By definition of $\opn{tel}_{\bsym{a}, P}$, for $k = 0$ there is a
commutative diagram
\[ \UseTips \xymatrix @C=18ex @R=6ex {
\mrm{H}^k \Bigl(  \opn{Hom}_A \bigl( \opn{Tel}(A; \bsym{a}), P \bigr) \Bigr)
\ar[r]^{\mrm{H}^k (\opn{tel}_{\bsym{a}, P}) }
\ar[d]_{\cong}
&
\Lambda_{\a} (P)
\ar[d]^{\cong}
\\
\mrm{H}^k \Bigl( \lim_{\leftarrow j} \,  
\bigl( \opn{Tel}^{\vee}_j(A; \bsym{a}) \ot_A P \bigr) \Bigr) 
\ar[r]^(0.54){\mrm{H}^k ( \lim_{\leftarrow j} \, \opn{tel}_{\bsym{a}, P, j}) }
\ar[d]_{\psi^k}
&
\lim_{\leftarrow j} \,  \bigl( ( A / (\bsym{a}^j) ) \ot_A P \bigr)
\\
\lim_{\leftarrow j} \, \mrm{H}^k 
\bigl( \opn{Tel}^{\vee}_j(A; \bsym{a}) \ot_A P \bigr) 
\ar[ur]_(0.54){\quad \quad \lim_{\leftarrow j} \, \mrm{H}^k ( 
\opn{tel}_{\bsym{a}, P, j}) }
} \]
The left part of the diagram makes sense for every $k$.
We will prove that:
\begin{enumerate}
\item 
$\lim_{\leftarrow j} \, \mrm{H}^k 
\bigl( \opn{Tel}^{\vee}_j(A; \bsym{a}) \ot_A P \bigr) = 0$
for all $k \neq 0$. 

\item 
$\mrm{H}^0 ( \opn{tel}_{\bsym{a}, P, j})$ is bijective for every $j \geq 0$.

\item $\psi^k$ is bijective for every $k$.
\end{enumerate}
Together these imply that 
$\mrm{H}^k ( \opn{tel}_{\bsym{a}, P})$ is bijective for every $k$.

There are quasi-isomorphisms 
\[ w_{\bsym{a}, j}^{\vee}  : \opn{K}(A; \bsym{a}^j)  \to
\opn{Tel}^{\vee}_j(A; \bsym{a})  \]
that are compatible with $j$.
Since $P$ is flat, according to Corollary \ref{cor:293} we get induced
isomorphisms 
\begin{equation} \label{eqn:254}
\mrm{H}^k \bigl( \opn{Tel}^{\vee}_j(A; \bsym{a}) \ot_A P \bigr) \cong
\mrm{H}^k \bigl( \opn{K}(A; \bsym{a}^j) \ot_A P \bigr) \cong
\mrm{H}^k \bigl( \opn{K}(A; \bsym{a}^j) \bigr)  \ot_A P
\end{equation}
that are compatible with $j$.

There is a canonical ring isomorphism 
$\mrm{H}^0 \bigl( \opn{K}(A; \bsym{a}^j) \bigl) \cong A / (\bsym{a}^j)$.
By definition of $\opn{tel}_{\bsym{a}, j}$, the homomorphism 
\[ \mrm{H}^0 ( \opn{tel}_{\bsym{a}, j}) : 
\mrm{H}^0 \bigl( \opn{Tel}^{\vee}_j(A; \bsym{a}) \bigr) \to
A / (\bsym{a}^j) \]
is bijective. 
Hence, using Corollary \ref{cor:293} again, we see that
$\mrm{H}^0 ( \opn{tel}_{\bsym{a}, P, j})$ is also
bijective. This proves (2).

We are given that $\bsym{a}$ is a weakly proregular sequence, which means that
the homomorphism
\[  \mrm{H}^k(p_{\bsym{a}, j', j}) :
 \mrm{H}^k \bigl( \opn{K}(A; \bsym{a}^{j'}) \bigr)   \to 
\mrm{H}^k \bigl( \opn{K}(A; \bsym{a}^j) \bigr)  \]
is zero for $k < 0$ and $j' \gg j$.
As for $k = 0$, we know that  
\[ \mrm{H}^0 \bigl( \opn{K}(A; \bsym{a}^{j'}) \bigr)   \to 
\mrm{H}^0 \bigl( \opn{K}(A; \bsym{a}^j) \bigr)  \]
is surjective for $j' \geq j$. Of course 
$\mrm{H}^k \bigl( \opn{K}(A; \bsym{a}^j) \bigr) = 0$ 
for $k > 0$. 
Thus for every $k$ the inverse system of modules 
\[ \bigl\{  \mrm{H}^k \bigl( \opn{Tel}^{\vee}_j(A; \bsym{a}) \ot_A P \bigr) 
\bigr\}_{j \in \N}  \cong
\bigl\{  \mrm{H}^k \bigl( \opn{K}(A; \bsym{a}^j) \bigr) \ot_A P 
\bigr\}_{j \in \N}  \]
satisfies the Mittag-Leffler condition.

The inverse system of complexes 
$\bigl\{ \opn{Tel}^{\vee}_j(A; \bsym{a}) \ot_A P \bigr\}_{j \in \N}$
also satisfies the Mittag-Leffler  condition, since it has surjective transition
maps.  (Warning: see Remark \ref{rem:251}.)
Therefore, by  \cite[Proposition 1.1.24]{KS1} or \cite[Theorem 3.5.8]{We},
the homomorphisms
\[ \psi^k : 
\mrm{H}^k \Bigl( 
\lim_{\leftarrow j} \, \bigl( \opn{Tel}^{\vee}_j(A; \bsym{a}) \ot_A P \bigr) 
\Bigr) \to
\lim_{\leftarrow j} \, \mrm{H}^k
\bigl( \opn{Tel}^{\vee}_j(A; \bsym{a}) \ot_A P \bigr) 
\]
are bijective. Thus (3) is true. 

Finally, weak proregularity, with the isomorphisms (\ref{eqn:254}),
tell us that the homomorphism 
\[  \mrm{H}^k \bigl( \opn{Tel}^{\vee}_{j'}(A; \bsym{a}) \ot_A P \bigr)   \to 
\mrm{H}^k \bigl( \opn{Tel}^{\vee}_j(A; \bsym{a}) \ot_A P \bigr)  \]
is zero for $k < 0$ and $j' \gg j$.
And everything is zero for $k > 0$. This implies (1). 
\end{proof}

\begin{cor} \label{cor:294}
Assume $\bsym{a}$ is a weakly proregular sequence in $A$. Then
for every K-flat complex $P$ the homomorphism
\[ \opn{tel}_{\bsym{a}, P} : \opn{Hom}_A ( \opn{Tel}(A; \bsym{a}), P) \to 
\Lambda_{\a}(P)  \]
is a quasi-isomorphism.
\end{cor}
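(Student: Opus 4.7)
The plan is to reduce Corollary~\ref{cor:294} to the module version, Theorem~\ref{thm:250}, by way of the criterion in Proposition~\ref{prop:1.1}. Define additive functors $F, G : \cat{Mod} A \to \cat{C}(\cat{Mod} A)$ by $F(N) := \opn{Hom}_A\bigl(\opn{Tel}(A;\bsym{a}), N\bigr)$ and $G(N) := \Lambda_{\a}(N)$, linked by the natural transformation $\eta := \opn{tel}_{\bsym{a}, -}$ of Definition~\ref{dfn:280}. For a single module $N$, the complex $F(N)$ is concentrated in degrees $[-n, 0]$ and $G(N)$ sits in degree $0$, so hypothesis (i) of Proposition~\ref{prop:1.1} holds uniformly with $j_0 = -n$, $j_1 = 0$. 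Because $\opn{Tel}(A;\bsym{a})$ is bounded with finitely many (free) components in each degree, the direct-sum totalizations used to extend $F$ and $G$ to complexes coincide with the Hom complex $\opn{Hom}_A\bigl(\opn{Tel}(A;\bsym{a}), P\bigr)$ and the componentwise completion $\Lambda_{\a}(P)$ appearing in the statement.

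If every $P^i$ happens to be a flat $A$-module, then Theorem~\ref{thm:250} says $\eta_{P^i} = \opn{tel}_{\bsym{a}, P^i}$ is a quasi-isomorphism for each $i$, so hypothesis (ii) of Proposition~\ref{prop:1.1} is met and $\opn{tel}_{\bsym{a}, P}$ is a quasi-isomorphism. For an arbitrary K-flat complex $P$, I would use Proposition~\ref{prop:28}(2) to choose a quasi-isomorphism $\phi : P^{\flat} \to P$ with $P^{\flat}$ K-flat and every component $(P^{\flat})^i$ flat. By naturality of $\opn{tel}_{\bsym{a}, -}$, it then suffices to verify that both $\opn{Hom}_A\bigl(\opn{Tel}(A;\bsym{a}), \phi\bigr)$ and $\Lambda_{\a}(\phi)$ are quasi-isomorphisms. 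The former holds because $\opn{Tel}(A;\bsym{a})$ is a bounded complex of free modules, hence K-projective, so that $\opn{Hom}_A\bigl(\opn{Tel}(A;\bsym{a}), -\bigr)$ carries quasi-isomorphisms to quasi-isomorphisms. The latter holds because the mapping cone of $\phi$ is an acyclic K-flat complex, and Lemma~\ref{lem:43} then forces $\Lambda_{\a}$ of this cone to be acyclic.

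The only real obstacle is the careful identification between the abstract extension used in Proposition~\ref{prop:1.1} and the Hom and completion complexes appearing explicitly in the statement; but once one observes that $\opn{Tel}(A;\bsym{a})$ has only finitely many nonzero (free) components, this is just a matter of bookkeeping. After that, the argument is simply an assembly of Theorem~\ref{thm:250}, Proposition~\ref{prop:1.1}, Proposition~\ref{prop:28}(2) and Lemma~\ref{lem:43}.
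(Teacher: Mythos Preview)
Your proposal is correct and follows essentially the same route as the paper: reduce via Proposition~\ref{prop:28}(2) to a K-flat complex with flat components, then apply Proposition~\ref{prop:1.1} and Theorem~\ref{thm:250}. You are in fact more careful than the paper, which simply asserts ``we can assume that $P$ is a complex of flat modules'' without spelling out (as you do, via K-projectivity of $\opn{Tel}(A;\bsym{a})$ and Lemma~\ref{lem:43}) why the replacement is harmless for both sides.
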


\begin{proof}
By Proposition \ref{prop:28} we can assume that $P$ is a complex of flat
modules. By Proposition \ref{prop:1.1} we reduce to
the case of a single flat module $P$. This is the theorem above. 
\end{proof}

\begin{rem} \label{rem:251}
The inverse systems of complexes 
$\bigl\{ \opn{K}(A; \bsym{a}^j) \ot_A P \bigr\}_{j \in \N}$
does not satisfy the ML condition; so we can't expect to get a
quasi-isomorphism in the inverse limit: the homomorphism
\[ \lim_{\leftarrow j} \, (w_{\bsym{a}, j}^{\vee} \ot 1_P) : 
\lim_{\leftarrow j} \, \bigl( \opn{K}(A; \bsym{a}^j) \ot_A P \bigr) \to
\lim_{\leftarrow j} \, \bigl( \opn{Tel}^{\vee}_j(A; \bsym{a}) \ot_A P \bigr) \]
will usually not be a quasi-isomorphism.

Indeed, this will even fail for the ring $A := \K[t]$, the  polynomial algebra
over a field $\K$, with sequence $\bsym{a} := (t)$ and flat module $P := A$. 
Here we get
\[ \mrm{H}^0 \bigl( \lim_{\leftarrow j} \, \opn{Tel}^{\vee}_j(A; \bsym{a}) 
\bigr) \cong  
\mrm{H}^0 \bigl( 
\opn{Hom}_A \bigl( \opn{Tel}(A; \bsym{a}), A \bigr) \bigr)
\cong \Lambda_{\a} (A) \cong \K[[t]] . \]
But 
$\lim_{\leftarrow j} \, \opn{K}(A; \bsym{a}^j)^0 \cong A$
and
$\lim_{\leftarrow j} \, \opn{K}(A; \bsym{a}^j)^{-1} = 0$,
giving 
\[ \mrm{H}^0 \bigl( \lim_{\leftarrow j} \, \opn{K}(A; \bsym{a}^j) \bigr)
\cong  A = \K[t] .  \]
\end{rem}

\begin{cor} \label{cor:tel.1}
Assume $\bsym{a}$ is a weakly proregular sequence in $A$.
For any $M \in \cat{D}(\cat{Mod} A)$ there is an isomorphism
\[ \opn{tel}_{\bsym{a}, M}^{\mrm{L}} : 
\opn{Hom}_A \bigl( \opn{Tel}(A; \bsym{a}) , M \bigr) 
\iso \mrm{L} \Lambda_{\a} (M) \]
in $\cat{D}(\cat{Mod} A)$, functorial in $M$, such that
\[ \opn{tel}_{\bsym{a}, M} \circ \opn{Hom}(u_{\bsym{a}}, 1_M) 
= \tau^{\mrm{L}}_M , \] 
as morphisms $M \to \mrm{L} \Lambda_{\a}(M)$.
\end{cor}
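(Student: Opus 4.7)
The plan is to reduce the construction to what is already proved at the level of K-flat complexes. Fix $M \in \cat{D}(\cat{Mod} A)$, and choose a K-flat resolution $\phi : P \to M$ consisting of flat $A$-modules (Proposition \ref{prop:28}(2)). Two facts drive the argument. First, by Proposition \ref{prop:26}, the canonical morphism $\xi_P : \mrm{L}\Lambda_{\a}(P) \to \Lambda_{\a}(P)$ is an isomorphism, so $\Lambda_{\a}(P)$ represents $\mrm{L}\Lambda_{\a}(M)$. Second, $\opn{Tel}(A; \bsym{a})$ is a bounded complex of free $A$-modules, hence K-projective, so the functor $\opn{Hom}_A(\opn{Tel}(A; \bsym{a}), -)$ on $\cat{C}(\cat{Mod} A)$ preserves quasi-isomorphisms; in particular it induces a well-defined functor on $\cat{D}(\cat{Mod} A)$, and the map $\opn{Hom}_A(\opn{Tel}(A; \bsym{a}), P) \to \opn{Hom}_A(\opn{Tel}(A; \bsym{a}), M)$ induced by $\phi$ is a quasi-isomorphism.

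Using these two observations I define $\opn{tel}^{\mrm{L}}_{\bsym{a}, M}$ as the composition in $\cat{D}(\cat{Mod} A)$
\[ \opn{Hom}_A(\opn{Tel}(A; \bsym{a}), M) \xleftarrow{\simeq} \opn{Hom}_A(\opn{Tel}(A; \bsym{a}), P) \xrightarrow{\opn{tel}_{\bsym{a}, P}} \Lambda_{\a}(P) \xleftarrow{\xi_P} \mrm{L}\Lambda_{\a}(P) \xrightarrow{\mrm{L}\Lambda_{\a}(\phi)} \mrm{L}\Lambda_{\a}(M). \]
By Corollary \ref{cor:294} the middle arrow $\opn{tel}_{\bsym{a}, P}$ is a quasi-isomorphism, and each of the remaining arrows is an isomorphism in $\cat{D}(\cat{Mod} A)$, so $\opn{tel}^{\mrm{L}}_{\bsym{a}, M}$ is an isomorphism. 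Independence of the chosen resolution, and functoriality in $M$, follow by the usual argument: given another K-flat resolution $P' \to M$, one can find a K-flat complex $P''$ mapping quasi-isomorphically to both $P$ and $P'$ over $M$, and all the pieces in the diagram above are natural in the resolution.

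Finally, to verify $\opn{tel}^{\mrm{L}}_{\bsym{a}, M} \circ \opn{Hom}(u_{\bsym{a}}, 1_M) = \tau^{\mrm{L}}_M$, I transport the equality across the resolution. At the level of the flat complex $P$, Lemma \ref{lem:203} gives the honest equality of complex homomorphisms
\[ \opn{tel}_{\bsym{a}, P} \circ \opn{Hom}(u_{\bsym{a}}, 1_P) = \tau_P : P \to \Lambda_{\a}(P). \]
On the other hand, from the formula in the proof of Proposition \ref{prop:comp.1}, $\tau^{\mrm{L}}_M = \mrm{L}\Lambda_{\a}(\phi) \circ \xi_P^{-1} \circ \tau_P \circ \phi^{-1}$ in $\cat{D}(\cat{Mod} A)$. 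Substituting the preceding identity and using the naturality of $\opn{Hom}(u_{\bsym{a}}, 1_{-})$ along $\phi$ yields the required compatibility, once one unpacks the definition of $\opn{tel}^{\mrm{L}}_{\bsym{a}, M}$.

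The content is essentially contained in Corollary \ref{cor:294} and Proposition \ref{prop:26}; the only delicate point is the well-definedness and functoriality of $\opn{tel}^{\mrm{L}}_{\bsym{a}, M}$, which amounts to a standard verification of independence of K-flat resolutions. Once this bookkeeping is handled, the compatibility with $\tau^{\mrm{L}}_M$ is a direct consequence of Lemma \ref{lem:203}.
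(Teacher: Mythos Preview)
Your proof is correct and follows essentially the same approach as the paper: reduce to a K-flat resolution $P \to M$, invoke Corollary \ref{cor:294} for the quasi-isomorphism $\opn{tel}_{\bsym{a}, P}$, Proposition \ref{prop:26} for the identification of $\Lambda_{\a}(P)$ with $\mrm{L}\Lambda_{\a}(M)$, and Lemma \ref{lem:203} for the compatibility with $\tau^{\mrm{L}}_M$. You have simply written out explicitly the bookkeeping (independence of resolution, functoriality, and the observation that $\opn{Tel}(A;\bsym{a})$ is K-projective) that the paper leaves implicit in the phrase ``it is enough to consider a K-flat complex $M = P$''.
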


\begin{proof}
It is enough to consider a K-flat complex $M = P$. 
For this we combine Theorem \ref{thm:250}, Proposition \ref{prop:26} and Lemma
\ref{lem:203}.
\end{proof}

The corollary says that the diagram 
\begin{equation}
\UseTips \xymatrix @C=10ex @R=6ex {
M
\ar[d]_{\opn{Hom}(u_{\bsym{a}}, 1_M)}
\ar[dr]^{\tau^{\mrm{L}} _M}
& 
\\
\opn{Hom}_A \bigl( \opn{Tel}(A; \bsym{a}), M \bigr)
\ar[r]_(0.65){\opn{tel}^{\mrm{L}} _{\bsym{a}, M}}
&
\mrm{L} \Lambda_{\a} (M)
} 
\end{equation}
is commutative.

\begin{cor} \label{cor:261}
Let $\a$ be a weakly proregular ideal in $A$. 
The cohomological dimension of the functor $\mrm{L} \Lambda_{\a}$
is finite. Indeed, if $\a$ can be generated by a weakly proregular sequence of
length $n$, then the cohomological dimension of $\mrm{L} \Lambda_{\a}$ is at
most $n$.
\end{cor}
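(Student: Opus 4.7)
I would model the argument on that of Corollary \ref{cor:260}, with the telescope complex playing the role of the infinite dual Koszul complex. First, fix a weakly proregular sequence $\bsym{a} = (a_1, \ldots, a_n)$ generating $\a$. Corollary \ref{cor:tel.1} then provides a functorial isomorphism
\[ \mrm{L} \Lambda_{\a}(M) \cong \opn{Hom}_A \bigl( \opn{Tel}(A; \bsym{a}), M \bigr) \]
in $\cat{D}(\cat{Mod} A)$ for every $M$, so it suffices to bound the cohomological dimension of the right-hand functor.

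The decisive fact is that $\opn{Tel}(A; \bsym{a})$ is a bounded complex of free $A$-modules concentrated in degrees $0, \ldots, n$. Being bounded with projective components, it is K-projective, so the underived Hom complex already represents $\opn{RHom}_A(\opn{Tel}(A; \bsym{a}), -)$; in particular it is invariant under quasi-isomorphism in the second argument. Explicitly, in each degree
\[ \opn{Hom}_A \bigl( \opn{Tel}(A; \bsym{a}), M \bigr)^k =
\sprod_{p = 0}^{n} \opn{Hom}_A \bigl( \opn{Tel}(A; \bsym{a})^p, M^{k + p} \bigr) . \]
When $M$ is a single $A$-module placed in degree $0$, this complex lives in degrees $-n, \ldots, 0$, which already exhibits the amplitude bound $n$ with shift $s = 0$.

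To extend from a single module to an arbitrary $M \in \cat{D}(\cat{Mod} A)$, I would use a way-out-in-both-directions argument, replacing $M$ by its good truncations in the window $[\inf \mrm{H}(M), \sup \mrm{H}(M)]$ (the case of an unbounded window being vacuous). Since the Hom complex is invariant under quasi-isomorphism in the second argument, the explicit formula above then forces
\[ \sup \mrm{H}(\mrm{L} \Lambda_{\a}(M)) \le \sup \mrm{H}(M), \qquad
\inf \mrm{H}(\mrm{L} \Lambda_{\a}(M)) \ge \inf \mrm{H}(M) - n , \]
yielding cohomological dimension at most $n$ with shift $s = 0$. I foresee no substantive obstacle; the whole argument is essentially a one-line deduction from Corollary \ref{cor:tel.1} together with the bounded shape of the telescope complex, and the only minor care required is to verify that the $\sup$ and $\inf$ estimates line up with the stated dimension $n$ rather than a weaker bound such as $2n$.
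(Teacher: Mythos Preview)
Your proposal is correct and follows exactly the paper's approach: the paper's proof is the single sentence ``This is immediate from Corollary \ref{cor:tel.1},'' and you have simply unpacked that immediacy by noting that $\opn{Tel}(A;\bsym{a})$ is a bounded complex of free modules concentrated in degrees $0,\ldots,n$, hence K-projective, so the functor $\opn{Hom}_A(\opn{Tel}(A;\bsym{a}),-)$ visibly has cohomological dimension at most $n$ with shift $s=0$.
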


\begin{proof}
This is immediate from Corollary \ref{cor:tel.1}.
\end{proof}

\begin{rem} \label{rem:280}
The name ``telescope complex'' is inspired by a standard construction in
algebraic topology; see \cite{GM}. However here we are looking at a specific
complex of $A$-modules, and we prove that it has the expected homological
properties.

The result  \cite[Theorem 4.5]{Sc}, which corresponds to our Theorem
\ref{thm:250}, only talks about {\em bounded
complexes} $M$, and there is an extra assumption that {\em each $a_i$ has
bounded torsion}. Moreover, Schenzel states that the question for unbounded
complexes is {\em open as far as he knows}. We answer this in the affirmative in
our Theorem \ref{thm:250}: our result holds for unbounded complexes, and there 
is no further assumption beyond the weak proregularity of the sequence 
$\bsym{a}$. 

In \cite{AJL1} there is an assertion similar to Theorem \ref{thm:250}
(more precisely, it corresponds to Theorem \ref{thm:mgm.3}). This is
\cite[formula $\tup{(0.3)}_{\tup{aff}}$]{AJL1},
that also refers to unbounded complexes, and makes no assumption except 
proregularity of the sequence $\bsym{a}$. In \cite[Correction]{AJL1} there is
some elaboration on the specific conditions needed for the proofs to be
correct. As far as we understand, the correct conditions are weak
proregularity for $\bsym{a}$, plus bounded torsion for each $a_i$. 
Hence our Theorem \ref{thm:250}, and also our Theorem
\ref{thm:mgm.3}, appear to be stronger than the affine versions of the results
in \cite{AJL1}. 

Our proof of Theorem \ref{thm:250}  does not depend on any of the results in
either \cite{AJL1} or \cite{Sc}. We believe our proof is quite transparent. 
Note also that we give an explicit formula for the homomorphism of complexes
$\opn{tel}_{\bsym{a}, P}$, that is not found in prior papers. 
\end{rem}

\section{Permanence of Weak Proregularity} \label{sec:permanence}

In this section we show that derived completion and derived torsion are
independent of the ring, when the the ideal is weakly proregular (Theorem 
\ref{thm:6.2}). 

\begin{thm} \label{thm:290}
Let $A$ be a ring, let $\bsym{a}$ and $\bsym{b}$ be finite sequences of elements
of $A$, and let $\a := (\bsym{a})$ and $\b := (\bsym{b})$, the ideals generated
by these sequences. Assume that $\sqrt{\a} = \sqrt{\b}$. 
Then the complexes $\opn{Tel}(A; \bsym{a})$ and $\opn{Tel}(A; \bsym{b})$
are homotopy equivalent.
\end{thm}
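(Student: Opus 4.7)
The plan is to invoke the cohomological characterization of weak proregularity (Theorem~\ref{thm:252}): a sequence $\bsym{c}$ is weakly proregular iff $\mrm{H}^k(\opn{K}^{\vee}_{\infty}(A; \bsym{c}) \otimes_A I) = 0$ for every injective $A$-module $I$ and every $k > 0$. The main step will be the following reduction lemma: whenever $c \in A$ satisfies $c^M \in (\bsym{a})$ for some $M$, the sequence $\bsym{a}$ is weakly proregular if and only if $(\bsym{a}, c)$ is. Granted this, the theorem follows by a double iteration. Since each $b_j \in \sqrt{(\bsym{a})}$, append $b_1, b_2, \ldots, b_n$ to $\bsym{a}$ one at a time; at each step the newly added element has a power in the ideal generated by the previously assembled sequence (which already contains $(\bsym{a})$), so $(\bsym{a}, \bsym{b})$ is weakly proregular. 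Then, since each $a_i \in \sqrt{(\bsym{b})}$ lies in the radical of the ideal of any intermediate sequence still containing $\bsym{b}$, delete $a_1, a_2, \ldots$ from $(\bsym{a}, \bsym{b})$ one at a time, concluding that $\bsym{b}$ is weakly proregular.

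To prove the reduction lemma, fix an injective $A$-module $I$, set $E := \opn{K}^{\vee}_{\infty}(A; \bsym{a}) \otimes_A I$, and use the short exact sequence
\[ 0 \to A[c^{-1}][-1] \to \opn{K}^{\vee}_{\infty}(A; c) \to A \to 0 \]
in $\cat{C}(\cat{Mod} A)$, where $A[c^{-1}][-1]$ is the complex concentrated in degree $1$. Its components are flat $A$-modules, so tensoring over $A$ with $E$ preserves exactness and yields
\[ 0 \to E[c^{-1}][-1] \to \opn{K}^{\vee}_{\infty}(A; \bsym{a}, c) \otimes_A I \to E \to 0 . \]
The crucial input is that every cohomology module $\mrm{H}^k(E)$ is $(\bsym{a})$-torsion. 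To see this, write $\mrm{H}^k(E) \cong \lim_{i \to}\, \mrm{H}^k(\opn{K}^{\vee}(A; \bsym{a}^i) \otimes_A I)$ (direct limits commute with tensor and cohomology), and observe that for each $j$ multiplication by $a_j^i$ is null-homotopic on $\opn{K}^{\vee}(A; a_j^i)$ (classical Koszul contracting homotopy), hence on the tensor product $\opn{K}^{\vee}(A; \bsym{a}^i) \otimes_A I$. Thus each $a_j^i$ annihilates every term of the directed system, making the colimit $(\bsym{a})$-torsion. Since $c^M \in (\bsym{a})$, $c$ acts locally nilpotently on $\mrm{H}^k(E)$, so $\mrm{H}^k(E)[c^{-1}] = 0$ for every $k$. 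The long exact cohomology sequence then collapses to isomorphisms
\[ \mrm{H}^k(\opn{K}^{\vee}_{\infty}(A; \bsym{a}, c) \otimes_A I) \cong \mrm{H}^k(E) \quad \text{for every } k \geq 1 , \]
showing that the $k > 0$ vanishing conditions from Theorem~\ref{thm:252} for $\bsym{a}$ and for $(\bsym{a}, c)$ are equivalent.

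The main obstacle is establishing the $(\bsym{a})$-torsion property of $\mrm{H}^k(E)$ without circularity: one cannot invoke the identification $\opn{K}^{\vee}_{\infty}(A; \bsym{a}) \otimes_A - \simeq \mrm{R}\Gamma_{(\bsym{a})}$ here, since that identification itself requires weak proregularity of $\bsym{a}$. The point is that the unconditional Koszul null-homotopy of $a_j^i$ on the \emph{finite} dual Koszul complex, passed through the filtered colimit, delivers the torsion property directly; everything else is then a routine long exact sequence calculation combined with the radical-equivalence iteration.
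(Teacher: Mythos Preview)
Your proof is correct and takes a genuinely different route from the paper's.

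The paper argues by a noetherian base-change trick: it builds an auxiliary finitely generated $\Z$-algebra $\til{A}$ carrying sequences $\til{\bsym{a}}, \til{\bsym{b}}$ that map to $\bsym{a}, \bsym{b}$ and satisfy $\sqrt{\til{\a}} = \sqrt{\til{\b}}$. Since $\til{A}$ is noetherian, both sequences are weakly proregular there (Theorem~\ref{thm:253}), and $\Ga_{\til{\a}} = \Ga_{\til{\b}}$ forces, via Proposition~\ref{prop:251}, an isomorphism $\opn{Tel}(\til{A}; \til{\bsym{a}}) \cong \opn{Tel}(\til{A}; \til{\bsym{b}})$ in $\cat{D}(\cat{Mod} \til{A})$; being bounded complexes of free modules, this lifts to a homotopy equivalence, which then base-changes to $A$ and, together with the quasi-isomorphisms $w_{\bsym{a}}, w_{\bsym{b}}$ of Lemma~\ref{lem:201}, reduces everything to Theorem~\ref{thm:252}. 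Your argument is more elementary: it stays entirely inside $A$, uses only Theorem~\ref{thm:252} plus the unconditional Koszul null-homotopy, and never invokes the noetherian case, the telescope complex, or Proposition~\ref{prop:251}. The paper's approach buys a slightly stronger intermediate statement (an actual homotopy equivalence $\opn{Tel}(A; \bsym{a}) \simeq \opn{Tel}(A; \bsym{b})$), while yours has the virtue of being self-contained and avoiding the auxiliary machinery. One small point worth making explicit in your deletion step: you are tacitly using that weak proregularity is invariant under permuting the sequence, which is immediate from Theorem~\ref{thm:252} since $\opn{K}^{\vee}_{\infty}(A; \bsym{a})$ is a tensor product of its one-variable factors.
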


Note that these sequences are not assumed to be weakly proregular.

\begin{proof}
For a sufficiently large positive integer $p$ we have $b_i^p \in \a$ and 
$a_j^p \in \b$ for all $i, j$. Hence there are finitely many $c_{i,j}, d_{j,i}
\in A$ such that 
$b_i^p = \sum_j c_{i,j} a_j$ and $a_j^p = \sum_i d_{j,i} b_i$.
Define
$\til{A}$ to be the quotient of the polynomial ring 
$\Z[ \{ s_i, t_j, u_{i, j}, v_{j, i} \} ]$ in finitely many variables, modulo
the relations 
$t_i^p = \sum_j u_{i,j} s_j$ and $s_j^p = \sum_i v_{j,i} t_i$.
Let $\til{a}_i \in \til{A}$ and $\til{b}_j \in \til{A}$ be the images of $s_i$
and $t_j$ respectively. 
There is a ring homomorphism $f : \til{A} \to A$ such that 
$f(\til{a}_i) = a_i$ and $f(\til{b_j}) = b_j$.

Define the finite sequences $\til{\bsym{a}} := (\til{a}_1, \ldots)$ and 
$\til{\bsym{b}} := (\til{b}_1, \ldots)$. There are corresponding ideals 
$\til{\a} := (\til{\bsym{a}})$ and $\til{\b} := (\til{\bsym{b}})$ in
$\til{A}$. Since the ring $\til{A}$ is noetherian, the sequences 
$\til{\bsym{a}}$ and  $\til{\bsym{b}}$ are weakly proregular.  
By construction we have $\sqrt{\til{\a}} = \sqrt{\til{\b}}$, and therefore 
$\Ga_{\til{\a}} = \Ga_{\til{\b}}$ as functors. According to Proposition
\ref{prop:251} there are isomorphisms 
\[ \opn{Tel}(\til{A}; \til{\bsym{a}}) \cong \mrm{R} \Ga_{\til{\a}} (\til{A})
\cong \mrm{R} \Ga_{\til{\b}} (\til{A}) \cong 
\opn{Tel}(\til{A}; \til{\bsym{b}}) \]
in $\cat{D}(\cat{Mod} \til{A})$.
Now $\opn{Tel}(\til{A}; \til{\bsym{a}})$ and 
$\opn{Tel}(\til{A}; \til{\bsym{b}})$
are bounded complexes of free $\til{A}$-modules, so there is a homotopy
equivalence 
$\til{\phi} : \opn{Tel}(\til{A}; \til{\bsym{a}}) \to 
\opn{Tel}(\til{A}; \til{\bsym{b}})$.
Applying base change along  $f$ to $\til{\phi}$ we get a homotopy
equivalence 
$\phi : \opn{Tel}(A; \bsym{a}) \to \opn{Tel}(A; \bsym{b})$
over $A$.
\end{proof}

The next result was already proved by Schenzel \cite[Lemma 3.3]{Sc}.

\begin{cor} \label{cor:300}  
In the situation of Theorem \tup{\ref{thm:290}}, the sequence  $\bsym{a}$ is
weakly proregular if and only if the sequence $\bsym{b}$ is weakly proregular.
\end{cor}

\begin{proof}
By Lemma \ref{lem:201} there are quasi-isomorphisms 
$w_{\bsym{a}}  : \opn{Tel}(A; \bsym{a}) \to 
\opn{K}^{\vee}_{\infty}(A; \bsym{a})$
and 
$w_{\bsym{b}}  : \opn{Tel}(A; \bsym{b}) \to 
\opn{K}^{\vee}_{\infty}(A; \bsym{b})$.
Now all these complexes are K-flat; therefore for any $A$-module $I$
there is a diagram of quasi-isomorphisms
\[
\begin{aligned}
& \opn{K}^{\vee}_{\infty}(A; \bsym{a}) \ot_A I 
\xleftarrow{w_{\bsym{a}} \ot 1_I}
\opn{Tel}(A; \bsym{a}) \ot_A I
\\
& \qquad \xar{\phi \ot 1_I}
\opn{Tel}(A; \bsym{b}) \ot_A I
\xar{w_{\bsym{b}} \ot 1_I}
\opn{K}^{\vee}_{\infty}(A; \bsym{b}) \ot_A I .
\end{aligned} \]
Here $\phi : \opn{Tel}(A; \bsym{a}) \to \opn{Tel}(A; \bsym{b})$
is a homotopy equivalence, that exists by Theorem \ref{thm:290}.
Taking $I$ to be an arbitrary injective $A$-module, Theorem \ref{thm:252} says
that $\bsym{a}$ is weakly proregular if and only if 
$\bsym{b}$ is weakly proregular.
\end{proof}

\begin{cor} \label{cor:290}
Let $\a$ be a weakly proregular ideal in a ring $A$. Then any finite sequence
that generates $\a$ is weakly proregular.
\end{cor}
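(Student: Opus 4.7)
The plan is to derive this as an immediate consequence of Theorem \ref{thm:290}, which has just been proved. By Definition \ref{dfn:250}(2), the assumption that $\a$ is weakly proregular means there exists at least one finite sequence $\bsym{a}$ generating $\a$ that is weakly proregular. Let $\bsym{b}$ be any other finite sequence generating $\a$. Then $(\bsym{a}) = \a = (\bsym{b})$, so in particular $\sqrt{(\bsym{a})} = \sqrt{(\bsym{b})}$, and Theorem \ref{thm:290} instantly yields that $\bsym{b}$ is weakly proregular.

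There is essentially no obstacle: all the technical work was carried out in Theorem \ref{thm:290}, where the passage between two sequences with the same radical was handled by reducing (via a base change from a finitely presented noetherian model ring $\til{A}$) to a homotopy equivalence between telescope complexes, and then applying the criterion of Theorem \ref{thm:252}. The present corollary just specializes that permanence statement to the tautological case $(\bsym{a}) = (\bsym{b})$, where the radical condition is trivially satisfied. Thus the proof will consist of one or two sentences invoking Theorem \ref{thm:290} directly.
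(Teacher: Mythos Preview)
Your proposal is correct and matches the paper's proof essentially verbatim: pick a weakly proregular generating sequence guaranteed by the definition, observe that any other generating sequence defines the same ideal (hence trivially the same radical), and apply Theorem \ref{thm:290}. The only difference is notational (the paper calls the given sequence $\bsym{a}$ and the known weakly proregular one $\bsym{b}$, you reverse this).
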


\begin{proof}
Let $\bsym{a}$ be any finite sequence that generates $\a$. Since $\a$ is weakly
proregular, it has some weakly proregular generating sequence $\bsym{b}$. 
By Corollary \ref{cor:300}, $\bsym{a}$ is also weakly proregular.
\end{proof}

\begin{cor} \label{cor:291}
Let $\a$ and $\b$ be finitely generated ideals in a ring $A$, such that 
$\sqrt{\a} = \sqrt{\b}$. 
Then $\a$ is weakly proregular if and only if $\b$ is weakly
proregular.
\end{cor}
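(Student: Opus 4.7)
The plan is to deduce Corollary \ref{cor:291} directly from Theorem \ref{thm:290}, by simply choosing finite generating sequences for $\a$ and $\b$ and invoking the theorem. The only subtlety is to make sure the starting hypothesis gives us what we need: weak proregularity of an ideal is defined via the \emph{existence} of some weakly proregular generating sequence, while Theorem \ref{thm:290} acts on given sequences.

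So assume $\a$ is weakly proregular. By Definition \ref{dfn:250}(2), there is some weakly proregular sequence $\bsym{a}$ generating $\a$. Since $\b$ is finitely generated by hypothesis, choose any finite generating sequence $\bsym{b}$ for $\b$. The equality $\sqrt{(\bsym{a})} = \sqrt{\a} = \sqrt{\b} = \sqrt{(\bsym{b})}$ puts us precisely in the setup of Theorem \ref{thm:290}, which then implies that $\bsym{b}$ is weakly proregular. Hence $\b$ is generated by the weakly proregular sequence $\bsym{b}$, so $\b$ is weakly proregular by definition. The converse direction is identical by symmetry (swap the roles of $\a$ and $\b$).

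There is no genuine obstacle here, since the hard work is done in Theorem \ref{thm:290}. The only point worth checking is that the hypothesis ``$\b$ finitely generated'' is really used to guarantee the existence of a finite generating sequence $\bsym{b}$ that can be plugged into Theorem \ref{thm:290}, and this is immediate. (One could also present the argument as a direct consequence of Corollary \ref{cor:290} combined with Theorem \ref{thm:290}: any finite generating sequence for a weakly proregular ideal is weakly proregular, so weak proregularity of $\a$ together with $\sqrt{\a}=\sqrt{\b}$ passes to any finite generating sequence of $\b$.) The proof should take just a few lines.
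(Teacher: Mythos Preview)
Your proposal is correct and follows essentially the same approach as the paper's own proof: choose a weakly proregular generating sequence for $\a$, any finite generating sequence for $\b$, and apply Theorem \ref{thm:290}.
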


\begin{proof}
Say $\a$ is weakly proregular. Choose a weakly proregular generating sequence
$\bsym{a}$ for $\a$. Let $\bsym{b}$ be any finite sequence that generates
$\b$. By Corollary \ref{cor:300}, $\bsym{b}$ is weakly proregular. Therefore the
ideal $\b$ is weakly proregular. 
\end{proof}

Let $f :A \to B$ be a ring homomorphism. There is a forgetful functor 
(restriction of scalars)
$F : \cat{Mod} B \to \cat{Mod} A$. 
Suppose $\a \subset A$ and $\b \subset B$ are finitely generated ideals such
that 
$\sqrt{\b} = \sqrt{B \cdot f(\a)}$ in $B$.
It is easy to see that there are isomorphisms 
$F \circ \Gamma_{\b} \cong \Gamma_{\a} \circ F$
and 
$F \circ \Lambda_{\b} \cong \Lambda_{\a} \circ F$,
as functors $\cat{Mod} B \to \cat{Mod} A$. 

Sometimes such isomorphisms exist also for the derived functors. Note that the
forgetful functor $F$ is exact, so it extends to a triangulated functor 
$F : \cat{D}(\cat{Mod} B) \to \cat{D}(\cat{Mod} A)$. 

\begin{thm} \label{thm:6.2}
Let $f :A \to B$ be a homomorphism of rings, 
let $\a$ be an ideal in $A$, and let $\b$ be an ideal in $B$. 
Assume that the ideals $\a$ and $\b$ are weakly proregular, and that
$\sqrt{\b} = \sqrt{B \cdot f(\a)}$.
Then there are isomorphisms
\[ F \circ \mrm{R} \Gamma_{\b} \cong \mrm{R} \Gamma_{\a} \circ F \]
and 
\[ F \circ \mrm{L} \Lambda_{\b} \cong \mrm{L} \Lambda_{\a} \circ F \]
of triangulated functors 
$\cat{D}(\cat{Mod} B) \to \cat{D}(\cat{Mod} A)$. 
\end{thm}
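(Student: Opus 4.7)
The plan is to reduce both isomorphisms to base-change identities for the infinite dual Koszul complex and the telescope complex, exploiting the explicit models of $\mrm{R} \Ga_{\a}$ and $\mrm{L} \La_{\a}$ provided by Corollaries \ref{cor:kosz.1} and \ref{cor:tel.1}. First I would reduce to the case $\b = B \cdot f(\a)$: both ideals are finitely generated (being weakly proregular), so their equal radicals force their adic topologies on every $B$-module to coincide; hence $\Ga_{\b} = \Ga_{B \cdot f(\a)}$ and $\La_{\b} = \La_{B \cdot f(\a)}$ as functors on $\cat{Mod} B$, and their derived functors agree as well. Then I pick a weakly proregular generating sequence $\bsym{a} = (a_1, \ldots, a_n)$ of $\a$; the base-changed sequence $f(\bsym{a}) := (f(a_1), \ldots, f(a_n))$ generates $\b$, and by Corollary \ref{cor:290} applied to the weakly proregular ideal $\b$, it is weakly proregular in $B$.

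The key technical input consists of two natural isomorphisms of complexes of $B$-modules,
\[ \opn{K}^{\vee}_{\infty}(A; \bsym{a}) \ot_A B \cong \opn{K}^{\vee}_{\infty}(B; f(\bsym{a})), \qquad \opn{Tel}(A; \bsym{a}) \ot_A B \cong \opn{Tel}(B; f(\bsym{a})), \]
each verified factor by factor from the explicit definitions: $(A \to A[a_i^{-1}]) \ot_A B = (B \to B[f(a_i)^{-1}])$, and $\opn{F}_{\mrm{fin}}(\N, A) \ot_A B = \opn{F}_{\mrm{fin}}(\N, B)$ carries the telescope differential for $a_i$ to that for $f(a_i)$. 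With these in hand, the torsion isomorphism, for $N \in \cat{D}(\cat{Mod} B)$, follows by chaining
\[ F \bigl( \mrm{R} \Ga_{\b}(N) \bigr) \cong F \bigl( \opn{K}^{\vee}_{\infty}(B; f(\bsym{a})) \ot_B N \bigr) \cong \opn{K}^{\vee}_{\infty}(A; \bsym{a}) \ot_A F(N) \cong \mrm{R} \Ga_{\a}(F(N)), \]
where the outer isomorphisms come from Corollary \ref{cor:kosz.1} and the middle one from the Koszul base-change identity (together with the obvious $(X \ot_A B) \ot_B N = X \ot_A F(N)$).

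For the completion statement I observe that both $\opn{Tel}(A; \bsym{a})$ and $\opn{Tel}(B; f(\bsym{a}))$ are bounded complexes of free modules, hence K-projective; the Hom functors out of them are therefore exact and descend directly to the derived category without resolving $N$. Then I chain
\[ F \bigl( \mrm{L} \La_{\b}(N) \bigr) \cong F \bigl( \opn{Hom}_B(\opn{Tel}(B; f(\bsym{a})), N) \bigr) \cong \opn{Hom}_A \bigl( \opn{Tel}(A; \bsym{a}), F(N) \bigr) \cong \mrm{L} \La_{\a}(F(N)), \]
where the outer isomorphisms are Corollary \ref{cor:tel.1} and the middle one is Hom-tensor adjunction combined with the telescope base-change identity. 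The main obstacle is to check that these chains of isomorphisms assemble into natural transformations of \emph{triangulated} functors and not merely pointwise isomorphisms. Since each intermediate operation (tensor with a fixed K-flat complex, Hom out of a K-projective complex, the exact forgetful functor $F$) is itself triangulated, and each intermediate isomorphism is manifestly natural in $N$ at the level of complexes, this should be routine bookkeeping. It is worth noting that I deliberately avoid resolving $N$ as a complex of $B$-modules: a K-flat or K-injective resolution over $B$ need not remain such over $A$, and it is precisely the K-flatness of $\opn{K}^{\vee}_{\infty}(A; \bsym{a})$ and the K-projectivity of $\opn{Tel}(A; \bsym{a})$ that let us sidestep this issue.
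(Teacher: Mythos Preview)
Your proof is correct and follows essentially the same route as the paper: reduce to $\b = B \cdot f(\a)$, push a generating sequence $\bsym{a}$ forward to $f(\bsym{a})$, invoke Corollary~\ref{cor:290} for its weak proregularity, and use the base-change identity for the telescope complex together with Hom--tensor adjunction for $\mrm{L}\La$. The only cosmetic difference is that for $\mrm{R}\Ga$ the paper uses the telescope model (Proposition~\ref{prop:251}) rather than the infinite dual Koszul model (Corollary~\ref{cor:kosz.1}); both work equally well. One small point: your reduction step relies implicitly on $B \cdot f(\a)$ being weakly proregular (so that Corollary~\ref{cor:290} applies after the reduction), which you should justify by Corollary~\ref{cor:291} rather than only by the equality of torsion and completion functors.
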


\begin{proof}
In view of Corollary \ref{cor:291} we can assume that 
$\b = B \cdot f(\a)$. 
Choose a sequence $\bsym{a} = (a_1, \ldots, a_n)$ that generates $\a$,
and let 
$\bsym{b} := (f(a_1), \ldots, f(a_n))$. According to Corollary \ref{cor:290} 
the sequences $\bsym{a}$ and $\bsym{b}$ are weakly proregular, in $A$ and $B$
respectively. 

We know that 
$\opn{Tel}(B; \bsym{b}) \cong B \otimes_A \opn{Tel}(A; \bsym{a})$
as complexes of $B$-modules. 
Take any $N \in \cat{D}(\cat{Mod} B)$. 
Using Corollary \ref{cor:tel.1} and Hom-tensor adjunction we get isomorphisms 
\[ (F \circ \, \mrm{L} \Lambda_{\b}) \, (N) \cong 
\opn{Hom}_B \bigl( \opn{Tel}(B; \bsym{b}) , N \bigr) \cong
\opn{Hom}_A \bigl( \opn{Tel}(A; \bsym{a}) , N \bigr) \cong 
(\mrm{L} \Lambda_{\a} \circ F) (N) . \] 
Likewise, using Proposition \ref{prop:251}, there are isomorphisms
\[ (F \circ \mrm{R} \Gamma_{\b}) \, (N) \cong 
\opn{Tel}(B; \bsym{b}) \otimes_B N \cong 
\opn{Tel}(A; \bsym{a}) \otimes_A N \cong 
(\mrm{R} \Gamma_{\a} \circ F) \, (N) . \]
\end{proof}

\begin{exa} \label{exa:280}
This is a continuation of Example \ref{exa:251}.
Let us assume that the ring homomorphisms $\K \to A$ and $\K \to B$ are
of formally finite type, in the sense of \cite{Ye1}.
(In the terminology of \cite{AJL2} these are pseudo finite type
homomorphisms.) Let $\c$ be the ideal in $C$ generated
by the sequence $\bsym{c}$, and define $\what{C} := \La_{\c}(C)$. 
According to \cite[Corollary 1.23]{Ye1} the ring $\what{C}$ is noetherian, and 
the homomorphism $\K \to \what{C}$ is of formally finite type. 
(E.g.\ if $A = \K[[s]]$ and $B = \K[[t]]$, with defining ideals
$\a := (s)$ and $\b := (t)$, then $\what{C} \cong \K[[s, t]]$.)
Let us denote by $\hat{\bsym{c}}$ the image of the sequence 
$\bsym{c}$ in the ring $\what{C}$, and by $\hat{\c}$ the ideal it generates.
By Theorem \ref{thm:253} the sequence 
$\hat{\bsym{c}}$ is weakly proregular. Theorem \ref{thm:6.2} says that
there are isomorphisms 
$\mrm{R} \Ga_{\c} \cong \mrm{R} \Ga_{\hat{\c}}$
and 
$\mrm{L} \La_{\c} \cong \mrm{L} \La_{\hat{\c}}$
between the derived functors. 
\end{exa}

\section{MGM Equivalence} \label{sec:MGM}

In this section $A$ is a commutative ring. We do not assume
that $A$ is noetherian or complete.
Weak proregularity was defined in Definition \ref{dfn:250}.
Recall that any finite sequence in a noetherian ring is weakly proregular, and
any ideal in a noetherian ring is weakly proregular (Theorem \ref{thm:253}).

\begin{lem} \label{lem:4.1}
Let $\bsym{a}$ be a finite sequence in $A$, let $\a$ be the ideal generated 
by $\bsym{a}$, and let $M$ be
an $A$-module. Then the homomorphism 
\[ \Lambda_{\a}(e_{\bsym{a}, \infty}^{\vee} \ot 1_M) :
\Lambda_{\a} \bigl( \opn{K}^{\vee}_{\infty}(A; \bsym{a}) \otimes_A M \bigr) 
\to \Lambda_{\a} (M) \]
\tup{(}see \tup{(\ref{eqn:213}))} is an isomorphism of complexes.
\end{lem}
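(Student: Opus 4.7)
The idea is to compute the complex $\Lambda_{\a}\bigl( \opn{K}^{\vee}_{\infty}(A; \bsym{a}) \otimes_A M \bigr)$ degree by degree and observe that it is concentrated in degree $0$ with value $\Lambda_{\a}(M)$, with the map in question becoming the identity there. Since $\Lambda_{\a}$ is additive and commutes with finite direct sums, this reduces the problem to understanding $\Lambda_{\a}$ on each summand of $\opn{K}^{\vee}_{\infty}(A; \bsym{a})^{k}$.

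Writing $\bsym{a} = (a_1, \ldots, a_n)$, formulas (\ref{eqn:202}) and (\ref{eqn:210}) give the description
\[
\opn{K}^{\vee}_{\infty}(A; \bsym{a})^k \;\cong\; \bigoplus_{\substack{S \subset \{1,\ldots,n\} \\ |S| = k}} A_S,
\]
where $A_S := A[\{a_i^{-1}\}_{i \in S}]$ is the localization obtained by inverting all $a_i$ for $i \in S$. In degree $0$ only $S = \emptyset$ contributes and $A_{\emptyset} = A$, while in degree $k \geq 1$ every summand has at least one index $i_0 \in S$ for which the generator $a_{i_0} \in \a$ acts invertibly on $A_S$, hence also on $A_S \otimes_A M$.

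The key observation is this: if some $a \in \a$ acts invertibly on an $A$-module $N$, then $\a N \supseteq a N = N$, so $N / \a^{i} N = 0$ for all $i \geq 1$, and therefore $\Lambda_{\a}(N) = 0$. Applied to $N = A_S \otimes_A M$ with $S \neq \emptyset$, this yields
\[
\Lambda_{\a}\bigl( \opn{K}^{\vee}_{\infty}(A; \bsym{a})^k \otimes_A M \bigr) = 0 \qquad \text{for all } k \geq 1 .
\]

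In degree $0$ we have $\opn{K}^{\vee}_{\infty}(A; \bsym{a})^0 \otimes_A M = A \otimes_A M = M$, and $e_{\bsym{a},\infty}^{\vee}$ is the identity $A \to A$ in degree $0$. Hence after applying $\Lambda_{\a}$, the homomorphism $\Lambda_{\a}(e_{\bsym{a},\infty}^{\vee} \otimes 1_M)$ reduces to the identity $\Lambda_{\a}(M) \to \Lambda_{\a}(M)$ in degree $0$, while source and target both vanish in positive degrees. This is exactly the desired isomorphism of complexes. The whole proof is really bookkeeping; the single substantive point is the remark that a module on which an element of $\a$ acts invertibly is killed by $\a$-adic completion.
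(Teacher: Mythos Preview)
Your proof is correct and follows essentially the same approach as the paper's: both compute $\Lambda_{\a}$ degree by degree, identify the degree-$0$ term with $\Lambda_{\a}(M)$ via the identity, and kill the higher-degree summands by noting that each is a module over some $A[a_j^{-1}]$, so that $(A/\a^k)\otimes_A(-)$ vanishes on it. You are simply a bit more explicit about the direct-sum decomposition indexed by subsets $S$ and about why $\Lambda_{\a}$ respects finite direct sums.
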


\begin{proof}
Since $\opn{K}^{\vee}_{\infty}(A; \bsym{a})^0 = A$, we have 
$\opn{K}^{\vee}_{\infty}(A; \bsym{a})^0 \otimes_A M \cong M$.
We will prove that 
$\Lambda_{\a} ( \opn{K}^{\vee}_{\infty}(A; \bsym{a})^i \otimes_A M ) = 0$
for $i > 0$. Now 
$\opn{K}^{\vee}_{\infty}(A; \bsym{a})^i$ is a direct sum of modules 
$N_{i, j}$, where $N_{i, j}$ is an $A[a_j^{-1}]$-module. 
Since 
\[ (A / \a^k) \otimes_A  N_{i, j} \otimes_A M = 0 \]
for any $k \in \N$, in the limit we get 
$\Lambda_{\a}(N_{i, j} \otimes_A M) = 0$.
\end{proof}

\begin{lem} \label{lem:263}  
Let $\a$ be a weakly proregular ideal in $A$.    
For any complex $M \in \cat{D}(\cat{Mod} A)$ the morphism 
\[ \mrm{L} \Lambda_{\a}(\sigma^{\mrm{R}}_M) : 
\mrm{L} \Lambda_{\a} (\mrm{R} \Gamma_{\a} (M)) \to \mrm{L} \Lambda_{\a} (M) \]
is an isomorphism.
\end{lem}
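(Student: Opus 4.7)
The plan is to trade $\mrm{L}\La_\a$ for the underived functor $\La_\a$ on a well-chosen K-flat model, and then recognize the morphism in question as an honest isomorphism of complexes.

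First, I would choose a K-flat resolution $\phi : P \to M$ (Proposition \ref{prop:28}(2)). By Corollary \ref{cor:kosz.1}, the morphism $\sigma^{\mrm{R}}_M$ is identified, via the isomorphism $v^{\mrm{R}}_{\bsym{a},P}$, with the genuine map of complexes
\[ e^{\vee}_{\bsym{a},\infty} \ot 1_P : \opn{K}^{\vee}_{\infty}(A;\bsym{a}) \ot_A P \to P . \]
Since $\opn{K}^{\vee}_{\infty}(A;\bsym{a})$ is a bounded complex of flat $A$-modules (see (\ref{eqn:202}) and (\ref{eqn:210})), the tensor product $\opn{K}^{\vee}_{\infty}(A;\bsym{a}) \ot_A P$ is itself K-flat. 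Hence by Proposition \ref{prop:26}, applying $\mrm{L}\La_\a$ to $\sigma^{\mrm{R}}_M$ amounts to applying the underived functor $\La_\a$ to the displayed map of complexes. Thus it suffices to show that
\[ \La_\a( e^{\vee}_{\bsym{a},\infty} \ot 1_P ) : \La_\a \bigl( \opn{K}^{\vee}_{\infty}(A;\bsym{a}) \ot_A P \bigr) \to \La_\a(P) \]
is a quasi-isomorphism.

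Next, I would upgrade this to an \emph{isomorphism of complexes}, by running the argument of Lemma \ref{lem:4.1} degree by degree. In cohomological degree $k$, the source complex equals the finite direct sum
\[ \bigoplus_{i+j=k,\ 0 \leq i \leq n} \opn{K}^{\vee}_{\infty}(A;\bsym{a})^i \ot_A P^j , \]
over which the additive functor $\La_\a$ distributes. The proof of Lemma \ref{lem:4.1}, applied to each flat $A$-module $P^j$ in place of $M$, shows that every summand with $i \geq 1$ is annihilated by $\La_\a$, while the $i = 0$ summand is $\La_\a(A \ot_A P^k) = \La_\a(P^k)$. Since $e^{\vee}_{\bsym{a},\infty}$ is the identity in degree $0$, the map becomes the identity on each degree-$k$ component. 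A direct inspection of the differential on $\opn{K}^{\vee}_{\infty}(A;\bsym{a}) \ot_A P$ then shows that, after $\La_\a$, all contributions from or into the $i \geq 1$ summands vanish, leaving only $1_A \ot d_P$; so the degree-wise identities assemble into an actual isomorphism of complexes, which is in particular a quasi-isomorphism.

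The only mild obstacle is the finiteness of the direct sum decomposition in each degree: since $\La_\a$ is not known to commute with infinite direct sums, the boundedness of $\opn{K}^{\vee}_{\infty}(A;\bsym{a})$ is used essentially. Weak proregularity itself enters only through Corollary \ref{cor:kosz.1}, which provides the initial K-flat replacement; everything subsequent is a transparent degree-wise computation.
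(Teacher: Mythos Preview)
Your proof is correct and follows essentially the same route as the paper's: choose a K-flat resolution $P\to M$, use Corollary~\ref{cor:kosz.1} and Proposition~\ref{prop:26} to replace $\mrm{L}\La_\a(\sigma^{\mrm{R}}_M)$ by $\La_\a(e^{\vee}_{\bsym{a},\infty}\ot 1_P)$, and then invoke Lemma~\ref{lem:4.1} to see this is an isomorphism of complexes. Your write-up is simply more explicit about the degree-by-degree passage from the module case of Lemma~\ref{lem:4.1} to the complex case (and note that flatness of each $P^j$ is not actually needed for that lemma's argument).
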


\begin{proof}
Choose a weakly proregular generating sequence $\bsym{a}$ for the ideal $\a$,
and a K-flat resolution $P \to M$ in $\cat{C}(\cat{Mod} A)$. The complex 
$\opn{K}^{\vee}_{\infty}(A; \bsym{a}) \otimes_A P$ is also K-flat. 
By  Corollary \ref{cor:kosz.1} and Proposition \ref{prop:26}, the morphism
$\mrm{L} \Lambda_{\a}(\sigma^{\mrm{R}}_M)$
can be replaced by the homomorphism of complexes
\begin{equation} \label{eqn:34}
\Lambda_{\a} (e_{\bsym{a}, \infty}^{\vee} \ot 1_P) : 
\Lambda_{\a} \bigl( \opn{K}^{\vee}_{\infty}(A; \bsym{a}) \otimes_A P \bigr) 
\to \Lambda_{\a} (P) .
\end{equation}
But by the previous lemma, the homomorphism (\ref{eqn:34}) is actually an
isomorphism in $\cat{C}(\cat{Mod} A)$.
\end{proof}

\begin{lem} \label{lem:202}
Let $\bsym{b} = (b_1, \ldots, b_n)$ be a sequence of nilpotent elements in a
ring $B$. Then 
$u_{\bsym{b}} : \opn{Tel}(B; \bsym{b}) \to B$
is a homotopy equivalence.
\end{lem}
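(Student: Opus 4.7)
The plan is to reduce to the one-variable case via the tensor product structure of $u_{\bsym{b}}$, and then handle a single nilpotent element by an explicit cohomology calculation where nilpotency forces all relevant sums to be finite.

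For the reduction, I observe from the constructions in Section \ref{sec:tel} that $w_{\bsym{b}} = w_{b_1} \otimes \cdots \otimes w_{b_n}$, and since the unit inclusion $e_{\bsym{b}} : B \to \opn{K}(B; \bsym{b})$ is the tensor product of its one-variable counterparts, dualization and passage to the direct limit give $e_{\bsym{b}, \infty}^{\vee} = e_{b_1, \infty}^{\vee} \otimes \cdots \otimes e_{b_n, \infty}^{\vee}$. Hence $u_{\bsym{b}} = u_{b_1} \otimes_B \cdots \otimes_B u_{b_n}$. Because tensoring a chain homotopy with any complex again yields a chain homotopy, it suffices to show each $u_{b_i} : \opn{Tel}(B; b_i) \to B$ is a homotopy equivalence. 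So I may assume $n = 1$.

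Fix $b := b_1$ with $b^N = 0$. Both $\opn{Tel}(B; b)$ and $B$ (concentrated in degree $0$) are bounded complexes of free $B$-modules, so the mapping cone of $u_b$ is a bounded complex of projective modules; once $u_b$ is shown to be a quasi-isomorphism, its cone will be acyclic and therefore split, making $u_b$ a homotopy equivalence. To get the quasi-isomorphism, I would compute the cohomology of $\opn{Tel}(B; b)$ directly. For $\mrm{H}^1$, the relation $\d(\delta_{j+1}) = \delta_j - b \delta_{j+1}$ gives $[\delta_i] = [b \delta_{i+1}] = \cdots = [b^N \delta_{i+N}] = 0$ in $\opn{coker}(\d^0)$, so $\mrm{H}^1(\opn{Tel}(B; b)) = 0$. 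For $\mrm{H}^0$, writing out the cocycle condition yields that every element of $\ker(\d^0)$ has the form $-c\, \delta_0 + \sum_{k \geq 0} b^k c\, \delta_{k+1}$ for a unique $c \in B$ (the sum terminates by nilpotency), and $u_b$ sends such a cocycle to $-c$; hence $\mrm{H}^0(u_b)$ is a bijection onto $B$.

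The main obstacle is precisely the point where nilpotency is used: without $b^N = 0$, the candidate cocycle $-c\,\delta_0 + \sum_{k \geq 0} b^k c\, \delta_{k+1}$ would fail to lie in the finitely-supported module $\opn{F}_{\mrm{fin}}(\N, B)$, and the iterative collapse $[\delta_i] = [b^N \delta_{i+N}] = 0$ in $\mrm{H}^1$ would break. (Compatibly with this, Lemma \ref{lem:201} explicitly warns that $u_{\bsym{a}}$ is generally only a quasi-isomorphism after passing to $\opn{K}^{\vee}_{\infty}(B; \bsym{a})$, not a homotopy equivalence to $B$.) If a more constructive proof is preferred, the same nilpotency yields an explicit contracting homotopy $h : \opn{Tel}(B; b)^1 \to \opn{Tel}(B; b)^0$ by $h(\delta_i) := -\sum_{k \geq 0} b^k \delta_{i+k+1}$, which is well-defined since $b^N = 0$, and whose verification reduces to a telescoping identity truncated by $b^N = 0$.
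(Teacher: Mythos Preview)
Your proof is correct, but it takes a different route from the paper's. The paper does not reduce to the one-variable case, nor does it compute the cohomology of the telescope complex directly. Instead it uses the factorization $u_{\bsym{b}} = e_{\bsym{b}, \infty}^{\vee} \circ w_{\bsym{b}}$: the map $w_{\bsym{b}} : \opn{Tel}(B; \bsym{b}) \to \opn{K}^{\vee}_{\infty}(B; \bsym{b})$ is already known to be a quasi-isomorphism by Lemma~\ref{lem:201}, and since each $b_i$ is nilpotent one has $B[b_i^{-1}] = 0$, so $\opn{K}^{\vee}_{\infty}(B; \bsym{b})^i = 0$ for all $i > 0$ and $e_{\bsym{b}, \infty}^{\vee}$ is an isomorphism of complexes. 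Hence $u_{\bsym{b}}$ is a quasi-isomorphism between bounded complexes of free modules, and therefore a homotopy equivalence. Your argument is more self-contained (it does not invoke Lemma~\ref{lem:201} or the Koszul machinery), and as a bonus it produces an explicit homotopy; the paper's argument is shorter because it recycles work already done. One small inaccuracy in your commentary: the warning after Lemma~\ref{lem:201} concerns $w_{\bsym{a}}$, not $u_{\bsym{a}}$, and $u_{\bsym{a}}$ is in general not even a quasi-isomorphism (e.g.\ when some $a_i$ is a non-zero-divisor), so your parenthetical remark slightly misstates the situation---but this does not affect the proof.
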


\begin{proof}
Recall that 
$u_{\bsym{b}} = e_{\bsym{b}, \infty}^{\vee} \circ w_{\bsym{b}}$,
where 
$ w_{\bsym{b}} : \opn{Tel}(B; \bsym{b}) \to 
\opn{K}^{\vee}_{\infty}(B; \bsym{b})$
is a quasi-isomorphism.
By formulas (\ref{eqn:202}) and (\ref{eqn:210}) we see that 
$\opn{K}^{\vee}_{\infty}(B; \bsym{b})^i = 0$ for $i > 0$,
so $e_{\bsym{b}, \infty}^{\vee}$ is an isomorphism.
We conclude that $u_{\bsym{b}}  : \opn{Tel}(B; \bsym{b}) \to B$ is a
quasi-isomorphism. 
But these are bounded complexes of free $B$-modules, and hence 
$u_{\bsym{b}}$ is a homotopy equivalence.
\end{proof}

\begin{lem} \label{lem:tel.2}
Let $\bsym{a}$ be a finite sequence in $A$, and let 
$B := A / (\bsym{a}^j)$ for some $j \geq 1$.
Let $N$ be a complex of $A$-modules, whose cohomology $\mrm{H}(N)$ is
bounded, and such that each $\mrm{H}^k(N)$ is a $B$-module. Then  the
homomorphism
\[ \opn{Hom}(u_{\bsym{a}}, 1_N) : N \to 
\opn{Hom}_A \bigl( \opn{Tel}(A; \bsym{a}), N \bigr) \]
is a quasi-isomorphism.
\end{lem}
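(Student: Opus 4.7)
The plan is to reduce the statement to the case where $N$ is a single $B$-module, then appeal to Lemma \ref{lem:202}. The key enabling observation is that $\opn{Tel}(A;\bsym{a})$ is a bounded complex of free $A$-modules (concentrated in degrees $0,\ldots,n$), hence K-projective. Therefore the functor $\opn{Hom}_A(\opn{Tel}(A;\bsym{a}), -)$ preserves quasi-isomorphisms and descends to a triangulated endofunctor of $\cat{D}(\cat{Mod} A)$, and $\opn{Hom}(u_{\bsym{a}}, 1_{-})$ becomes a natural transformation of triangulated functors from $\opn{id}$ to $\opn{Hom}_A(\opn{Tel}(A;\bsym{a}), -)$.

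For the base case, suppose $N = M$ is a single $B$-module, where $B = A/(\bsym{a}^j)$. Any $A$-linear homomorphism into $M$ factors through the base change $B \ot_A -$, so there is a canonical isomorphism of complexes
\[ \opn{Hom}_A \bigl( \opn{Tel}(A; \bsym{a}), M \bigr) \cong
\opn{Hom}_B \bigl( B \ot_A \opn{Tel}(A; \bsym{a}), M \bigr) =
\opn{Hom}_B \bigl( \opn{Tel}(B; \bar{\bsym{a}}), M \bigr), \]
where $\bar{\bsym{a}}$ is the image of $\bsym{a}$ in $B$. Each element $\bar{a}_i$ is nilpotent (since $\bar{a}_i^j = 0$), so Lemma \ref{lem:202} applies and $u_{\bar{\bsym{a}}}$ is a homotopy equivalence. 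Applying $\opn{Hom}_B(-, M)$ yields a homotopy equivalence $M \to \opn{Hom}_B(\opn{Tel}(B;\bar{\bsym{a}}), M)$; under the identification above, this is precisely $\opn{Hom}(u_{\bsym{a}}, 1_M)$.

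For the general case I will induct on $\opn{amp}(\mrm{H}(N))$, which is finite by hypothesis. If the amplitude is $0$, then $N \cong \mrm{H}^p(N)[-p]$ in $\cat{D}(\cat{Mod} A)$ for the unique $p$ where cohomology is nonzero, and the base case applies. For the inductive step, let $q := \sup\{k : \mrm{H}^k(N) \neq 0\}$ and use the smart truncation distinguished triangle
\[ \tau^{<q}(N) \to N \to \mrm{H}^q(N)[-q] \distri \]
in $\cat{D}(\cat{Mod} A)$. Both $\opn{id}$ and $\opn{Hom}_A(\opn{Tel}(A;\bsym{a}),-)$ send this to distinguished triangles, and the natural transformation $\opn{Hom}(u_{\bsym{a}},1_{-})$ yields a morphism between them. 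By inductive hypothesis the transformation is a quasi-isomorphism on $\tau^{<q}(N)$ (smaller amplitude, same $B$-module property on cohomology), and by the base case it is a quasi-isomorphism on $\mrm{H}^q(N)[-q]$. Two-out-of-three in the derived category then gives the conclusion for $N$.

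The main obstacle is that one cannot apply Proposition \ref{prop:1.1} naively to $N$ component-wise: the modules $N^i$ themselves need not be $B$-modules, only the cohomology objects $\mrm{H}^k(N)$ are. This is exactly what forces the argument into the derived category, where K-projectivity of $\opn{Tel}(A;\bsym{a})$ and the triangulated induction replace the more direct degree-wise reduction.
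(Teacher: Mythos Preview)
Your proof is correct and follows essentially the same approach as the paper: reduce to a single $B$-module via induction on $\opn{amp}(\mrm{H}(N))$ using smart truncation (the way-out argument of \cite[Proposition I.7.1]{RD}), then pass to $B$ via Hom--tensor adjunction and apply Lemma \ref{lem:202}. Your explicit justification that $\opn{Tel}(A;\bsym{a})$ is K-projective, so that $\opn{Hom}(u_{\bsym{a}},1_{-})$ descends to a morphism of triangulated functors on $\cat{D}(\cat{Mod} A)$, makes the reduction step cleaner than the paper's terse citation of the way-out argument.
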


\begin{proof}
Using smart truncation and induction on $\opn{amp}(\mrm{H}(N))$, 
i.e.\ by the way-out argument of \cite[Proposition I.7.1]{RD}, we may assume
that $N$ is a single $B$-module.

Let $\bsym{b}$ denote the image of the sequence $\bsym{a}$ in $B$. Then
$\opn{Tel}(B; \bsym{b}) \cong B \otimes_A \opn{Tel}(A; \bsym{a})$
as complexes. By Hom-tensor adjunction there is an isomorphism of complexes 
\[ \opn{Hom}_A \bigl( \opn{Tel}(A; \bsym{a}), N \bigr) \cong 
\opn{Hom}_{B} \bigl( \opn{Tel}(B; \bsym{b}), N \bigr) . \]
It suffices then to prove that 
\[  \opn{Hom}(u_{\bsym{b}}, 1_N) : N \cong \opn{Hom}_B(B, N)  \to 
\opn{Hom}_B \bigl( \opn{Tel}(B; \bsym{b}), N \bigr)  \]
is a quasi-isomorphism. By Lemma  \ref{lem:202} we know that 
$u_{\bsym{b}}$ is a homotopy equivalence; and therefore 
$\opn{Hom}(u_{\bsym{b}}, 1_N)$ is a quasi-isomorphism. 
\end{proof}

\begin{lem}  \label{lem:264}  
Let $\a$ be a weakly proregular ideal in $A$. 
For any complex $M \in \cat{D}(\cat{Mod} A)$ the morphism 
\[ \mrm{R} \Gamma_{\a}(\tau^{\mrm{L}}_M) : 
\mrm{R} \Gamma_{\a} (M) \to \mrm{R} \Gamma_{\a} (\mrm{L} \Lambda_{\a} (M)) \]
is an isomorphism.
\end{lem}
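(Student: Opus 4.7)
The plan is to mirror the argument of Lemma \ref{lem:263} but with the roles of $\mrm{L}\Lambda_\a$ and $\mrm{R}\Gamma_\a$ exchanged, using the Koszul identification $v^{\mrm{R}}_{\bsym{a},-}$ of Corollary \ref{cor:kosz.1} in place of Lemma \ref{lem:4.1}. First I would fix a weakly proregular sequence $\bsym{a}$ generating $\a$ and choose a K-flat resolution $P \to M$ in $\cat{C}(\cat{Mod} A)$. By Proposition \ref{prop:26} the complex $\Lambda_\a(P)$ represents $\mrm{L}\Lambda_\a(M)$, and the morphism $\tau^{\mrm{L}}_M$ is represented by the honest cochain homomorphism $\tau_P : P \to \Lambda_\a(P)$.

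Next I would pass to the Koszul picture. Since $\opn{K}^{\vee}_{\infty}(A;\bsym{a})$ is a bounded complex of flat $A$-modules, it is K-flat, so the endofunctor $\opn{K}^{\vee}_{\infty}(A;\bsym{a}) \otimes_A (-)$ of $\cat{C}(\cat{Mod} A)$ preserves quasi-isomorphisms and is naturally isomorphic on $\cat{D}(\cat{Mod} A)$ to $\mrm{R}\Gamma_\a(-)$ via $v^{\mrm{R}}_{\bsym{a},-}$. Under this identification the morphism $\mrm{R}\Gamma_\a(\tau^{\mrm{L}}_M)$ is represented by the cochain map
\[
1 \otimes \tau_P : \opn{K}^{\vee}_{\infty}(A;\bsym{a}) \otimes_A P \longrightarrow \opn{K}^{\vee}_{\infty}(A;\bsym{a}) \otimes_A \Lambda_\a(P) .
\]
So it suffices to show that $1 \otimes \tau_P$ is a quasi-isomorphism; equivalently, writing $C := \opn{cone}(\tau_P)$, to show that the complex $\opn{K}^{\vee}_{\infty}(A;\bsym{a}) \otimes_A C$ is acyclic.

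To verify this I would apply the way-out argument. By Corollaries \ref{cor:260} and \ref{cor:261} the functors $\mrm{R}\Gamma_\a$ and $\mrm{L}\Lambda_\a$ both have finite cohomological dimension, and by Proposition \ref{prop:21} so does their composition. Hence by \cite[Proposition I.7.1]{RD} it is enough to check the natural transformation $\mrm{R}\Gamma_\a(\tau^{\mrm{L}}_{-})$ on single $A$-modules $M$. For such $M$ one takes $P$ to be a flat resolution concentrated in non-positive degrees, so that $C$ is bounded.

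The main obstacle is this last step: unlike Lemma \ref{lem:263}, where Lemma \ref{lem:4.1} gave an immediate cochain-level identification because $\Lambda_\a$ annihilates $\opn{K}^{\vee}_{\infty}(A;\bsym{a})^i$ for $i > 0$, no such degreewise collapse is available here. Instead one must analyse how tensoring the $a_j$-inverted components $\opn{K}^{\vee}_{\infty}(A;\bsym{a})^i$ (for $i>0$) with the cokernel and kernel of $\tau_P$ interacts with the derived functors $\mrm{L}^i \Lambda_\a(M)$; this is where weak proregularity of $\bsym{a}$ will enter (via Theorem \ref{thm:252}), ensuring that the higher Koszul contributions cancel with the higher derived completions so that $\opn{K}^{\vee}_{\infty}(A;\bsym{a}) \otimes_A C$ becomes acyclic.
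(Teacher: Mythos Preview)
Your reduction via the way-out argument to the case of a single module $M$ is valid, and the representation of $\mrm{R}\Gamma_{\a}(\tau^{\mrm{L}}_M)$ by $1 \otimes \tau_P$ on $\opn{K}^{\vee}_{\infty}(A;\bsym{a}) \otimes_A P$ is correct. However, the final paragraph is not a proof: you correctly identify that there is no analogue of the degreewise collapse of Lemma~\ref{lem:4.1}, but the sentence about ``higher Koszul contributions cancelling with higher derived completions via Theorem~\ref{thm:252}'' is a hope, not an argument. Theorem~\ref{thm:252} concerns injective modules, and there is no evident way to apply it to the cone $C = \opn{cone}(\tau_P)$. (Incidentally, your claim that $C$ is bounded is false unless $M$ has finite flat dimension, though this is not the essential obstacle.)

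The paper avoids this difficulty by a different choice of representatives. Instead of $\Lambda_{\a}(P)$ for a K-flat resolution $P$, it uses Corollary~\ref{cor:tel.1} to represent $\mrm{L}\Lambda_{\a}(M)$ as $\opn{Hom}_A\bigl(\opn{Tel}(A;\bsym{a}),M\bigr)$ and $\tau^{\mrm{L}}_M$ as $\opn{Hom}(u_{\bsym{a}},1_M)$ --- no resolution of $M$ is needed. Combined with Proposition~\ref{prop:251} for $\mrm{R}\Gamma_{\a}$, the morphism in question becomes $1_{\opn{Tel}} \otimes \opn{Hom}(u_{\bsym{a}},1_M)$. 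Writing $\opn{Tel} = \bigcup_j \opn{Tel}_j$ and using that each $\opn{Tel}_j$ is a bounded complex of finite free modules, one reduces (after the module reduction via Proposition~\ref{prop:1.1}) to showing that $\opn{Hom}(u_{\bsym{a}},1_N)$ is a quasi-isomorphism for $N := \opn{Tel}_j(A;\bsym{a}) \otimes_A M$. The key point now is that $N$ has bounded cohomology annihilated by $(\bsym{a}^j)$; over $B := A/(\bsym{a}^j)$ the image $\bsym{b}$ of $\bsym{a}$ is a sequence of nilpotents, so $u_{\bsym{b}} : \opn{Tel}(B;\bsym{b}) \to B$ is a homotopy equivalence (Lemma~\ref{lem:202}), and Lemma~\ref{lem:tel.2} finishes the argument. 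This nilpotence trick is the idea missing from your approach.
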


\begin{proof}
By Corollary \ref{cor:tel.1} we can replace $\tau^{\mrm{L}}_M$ with 
\[ \opn{Hom}(u_{\bsym{a}}, 1_M) : M \to 
\opn{Hom}_A \bigl( \opn{Tel}(A; \bsym{a}), M \bigr) \]
And by Proposition \ref{prop:251} we can replace 
$\mrm{R} \Gamma_{\a}(\tau^{\mrm{L}}_M)$ with 
\begin{equation} \label{eqn:257}
\begin{aligned}
& 1_{\opn{Tel}} \ot \opn{Hom}(u_{\bsym{a}}, 1_M) :
\opn{Tel}(A; \bsym{a}) \ot_A M
\\ & \quad  \qquad 
\to \opn{Tel}(A; \bsym{a}) \ot_A 
\opn{Hom}_A \bigl( \opn{Tel}(A; \bsym{a}), M \bigr) .
\end{aligned} 
\end{equation}
We will prove that (\ref{eqn:257}) is a quasi-isomorphism.

In view of Proposition \ref{prop:1.1} we can assume that $M$ is a single
$A$-module. Since direct limits commute with cohomology, it suffices to prove
that 
\begin{equation} \label{eqn:258}
\begin{aligned}
& 1_{\opn{Tel}_j} \ot \opn{Hom}(u_{\bsym{a}}, 1_M) :
\opn{Tel}_j(A; \bsym{a}) \ot_A M
\\ & \quad  \qquad 
\to \opn{Tel}_j(A; \bsym{a}) \ot_A 
\opn{Hom}_A \bigl( \opn{Tel}(A; \bsym{a}), M \bigr) .
\end{aligned} 
\end{equation}
is a quasi-isomorphism for every $j$. 
Now $\opn{Tel}_j(A; \bsym{a})$ is a bounded complex of finite rank free
$A$-modules, so we can replace (\ref{eqn:258}) with 
\[ \opn{Hom}(u_{\bsym{a}}, 1_N) : N \to
\opn{Hom}_A \bigl( \opn{Tel}(A; \bsym{a}), N \bigr) , \]
where 
$N := \opn{Tel}_j(A; \bsym{a}) \ot_A M$. 
The complex $N$ satisfies the assumption of Lemma \ref{lem:tel.2}, and
therefore $\opn{Hom}(u_{\bsym{a}}, 1_N)$ is a quasi-isomorphism.
\end{proof}

\begin{lem}  \label{lem:265}
For a finite sequence $\bsym{a}$ of elements of $A$, the homomorphisms
\[ u_{\bsym{a}} \otimes 1_{\opn{Tel}} , \, 
1_{\opn{Tel}} \otimes u_{\bsym{a}} :
\opn{Tel}(A; \bsym{a}) \otimes_A \opn{Tel}(A; \bsym{a}) 
\to \opn{Tel}(A; \bsym{a}) \]
are homotopy equivalences.
\end{lem}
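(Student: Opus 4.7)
The plan is to reduce the statement to Lemma \ref{lem:260} via the factorization $u_{\bsym{a}} = e_{\bsym{a}, \infty}^{\vee} \circ w_{\bsym{a}}$, and then upgrade from quasi-iso\-mor\-phism to homotopy equivalence using that $\opn{Tel}(A; \bsym{a})$ is a bounded complex of free $A$-modules.

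First I would treat $u_{\bsym{a}} \otimes 1_{\opn{Tel}}$. Consider the commutative square
\[
\UseTips \xymatrix @C=10ex @R=6ex {
\opn{Tel}(A; \bsym{a}) \otimes_A \opn{Tel}(A; \bsym{a})
\ar[r]^(0.6){u_{\bsym{a}} \otimes 1}
\ar[d]_{w_{\bsym{a}} \otimes w_{\bsym{a}}}
& \opn{Tel}(A; \bsym{a}) \ar[d]^{w_{\bsym{a}}} \\
\opn{K}^{\vee}_{\infty}(A; \bsym{a}) \otimes_A \opn{K}^{\vee}_{\infty}(A; \bsym{a})
\ar[r]^(0.65){e_{\bsym{a}, \infty}^{\vee} \otimes 1}
& \opn{K}^{\vee}_{\infty}(A; \bsym{a})
}
\]
whose commutativity follows directly from the definition $u_{\bsym{a}} = e_{\bsym{a}, \infty}^{\vee} \circ w_{\bsym{a}}$. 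By Lemma \ref{lem:201} the map $w_{\bsym{a}}$ is a quasi-isomorphism. Since $\opn{Tel}(A; \bsym{a})$ and $\opn{K}^{\vee}_{\infty}(A; \bsym{a})$ are both K-flat (bounded complexes of free, respectively flat, $A$-modules; cf.\ (\ref{eqn:210})), the tensor product $w_{\bsym{a}} \otimes w_{\bsym{a}}$ is also a quasi-isomorphism. Lemma \ref{lem:260} says the bottom arrow is a quasi-isomorphism. By two-out-of-three, $u_{\bsym{a}} \otimes 1_{\opn{Tel}}$ is a quasi-isomorphism. The same argument, using the other map in Lemma \ref{lem:260}, handles $1_{\opn{Tel}} \otimes u_{\bsym{a}}$.

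To pass from quasi-isomorphism to homotopy equivalence, I would observe that both the source $\opn{Tel}(A; \bsym{a}) \otimes_A \opn{Tel}(A; \bsym{a})$ and target $\opn{Tel}(A; \bsym{a})$ are bounded complexes of free $A$-modules, hence K-projective (cf.\ the remarks in Section \ref{sec:prelim}). A quasi-isomorphism between K-projective complexes is automatically a homotopy equivalence, by a standard argument (the cone is acyclic and K-projective, hence contractible).

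No step should present a serious obstacle; the only delicate point is verifying commutativity of the square and making sure K-flatness is available to conclude that $w_{\bsym{a}} \otimes w_{\bsym{a}}$ is a quasi-isomorphism. Both are immediate from the explicit descriptions already set up in Section \ref{sec:tel}. Note that this lemma is the telescope analogue of Lemma \ref{lem:260}, and it will be used in the sequel (e.g.\ in the idempotence of $\mrm{L} \Lambda_{\a}$) exactly as Lemma \ref{lem:260} was used for $\mrm{R} \Gamma_{\a}$ in Corollary \ref{cor:11}.
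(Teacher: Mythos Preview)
Your proof is correct and follows the same approach as the paper: the paper's proof simply says ``Because of Lemmas \ref{lem:260} and \ref{lem:201} these are quasi-isomorphisms. But a quasi-isomorphism between K-projective complexes is a homotopy equivalence.'' Your commutative square and two-out-of-three argument just make explicit how Lemmas \ref{lem:260} and \ref{lem:201} combine to give the quasi-isomorphism claim.
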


\begin{proof}
Because of Lemmas \ref{lem:260} and \ref{lem:201} these are quasi-isomorphisms. 
But a quasi-isomorphism between K-projective complexes is a homotopy
equivalence.
\end{proof}

\begin{prop} \label{prop:260}
Let $\a$ be a weakly proregular ideal in $A$. 
For any $M \in \cat{D}(\cat{Mod} A)$ the morphism 
\[ \tau^{\tup{L}}_{\mrm{L} \Lambda_{\a} (M)}  : \mrm{L} \Lambda_{\a} (M) \to 
\mrm{L} \Lambda_{\a} (\mrm{L} \Lambda_{\a} (M))  \]
is an isomorphism. So the functor
\[ \mrm{L} \Lambda_{\a} : 
\cat{D}(\cat{Mod} A) \to \cat{D}(\cat{Mod} A) \]
is idempotent.
\end{prop}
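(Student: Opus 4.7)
The plan is to reduce the statement to a purely combinatorial fact about the telescope complex, namely Lemma~\ref{lem:265}. Choose a weakly proregular sequence $\bsym{a}$ generating $\a$, and write $T := \opn{Tel}(A; \bsym{a})$ for brevity. By Corollary~\ref{cor:tel.1} there is a functorial isomorphism $\opn{tel}^{\mrm{L}}_{\bsym{a}, M} : \opn{Hom}_A(T, M) \iso \mrm{L} \Lambda_{\a}(M)$ in $\cat{D}(\cat{Mod} A)$, and moreover the composition $\opn{tel}^{\mrm{L}}_{\bsym{a}, M} \circ \opn{Hom}(u_{\bsym{a}}, 1_M)$ equals $\tau^{\mrm{L}}_M$. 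Thus I may replace $\tau^{\mrm{L}}_{\mrm{L}\Lambda_{\a}(M)}$ by the analogous morphism in which $\mrm{L}\Lambda_{\a}$ is modeled by $\opn{Hom}_A(T, -)$.

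Concretely, applying Corollary~\ref{cor:tel.1} both to $M$ and to $N := \opn{Hom}_A(T, M)$, the morphism $\tau^{\mrm{L}}_{\mrm{L}\Lambda_{\a}(M)}$ corresponds to the homomorphism
\[
\opn{Hom}(u_{\bsym{a}}, 1_N) \, : \, \opn{Hom}_A(T, M) \longrightarrow \opn{Hom}_A\bigl(T, \opn{Hom}_A(T, M)\bigr).
\]
By Hom–tensor adjunction the target is canonically isomorphic to $\opn{Hom}_A(T \otimes_A T, M)$, and under this adjunction the above morphism becomes $\opn{Hom}(u_{\bsym{a}} \otimes 1_T, 1_M)$ (I would double-check the convention so that it is the correct tensor factor, but the two choices yield homotopic maps by Lemma~\ref{lem:265} anyway).

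Now Lemma~\ref{lem:265} tells us that $u_{\bsym{a}} \otimes 1_T : T \otimes_A T \to T$ is a homotopy equivalence of complexes of free $A$-modules. Applying $\opn{Hom}_A(-, M)$ to a homotopy equivalence gives a homotopy equivalence of complexes, hence a quasi-isomorphism, so $\opn{Hom}(u_{\bsym{a}} \otimes 1_T, 1_M)$ is an isomorphism in $\cat{D}(\cat{Mod} A)$. This proves that $\tau^{\mrm{L}}_{\mrm{L}\Lambda_{\a}(M)}$ is an isomorphism, and in particular that $\mrm{L}\Lambda_{\a}$ is idempotent.

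The one delicate point is bookkeeping: I must verify that under the two applications of the identification in Corollary~\ref{cor:tel.1}, the morphism $\tau^{\mrm{L}}_{\mrm{L}\Lambda_{\a}(M)}$ really becomes $\opn{Hom}(u_{\bsym{a}} \otimes 1_T, 1_M)$ rather than some other twist — but this is a diagram-chase using the functoriality and the compatibility with $\tau^{\mrm{L}}$ already stated in Corollary~\ref{cor:tel.1}. No further input beyond Lemma~\ref{lem:265} and Corollary~\ref{cor:tel.1} is needed.
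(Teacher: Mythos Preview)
Your proposal is correct and follows essentially the same route as the paper's own proof: choose a weakly proregular generating sequence, use Corollary~\ref{cor:tel.1} to model $\mrm{L}\Lambda_{\a}$ by $\opn{Hom}_A(T,-)$ with $T=\opn{Tel}(A;\bsym{a})$, convert via Hom--tensor adjunction to a map of the form $\opn{Hom}(u_{\bsym{a}}\otimes 1_T,1_M)$ (the paper lands on $\opn{Hom}(1_T\otimes u_{\bsym{a}},1_M)$, but as you note this is immaterial), and conclude by Lemma~\ref{lem:265}.
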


\begin{proof}
Choose some weakly proregular sequence $\bsym{a}$ that generates $\a$. 
According to Corollary \ref{cor:tel.1} we can replace 
$\tau^{\tup{L}}_{\mrm{L} \Lambda_{\a} (M)}$
with 
\[ \opn{Hom}(1_{T}, \opn{Hom}(u_{\bsym{a}}, 1_M)) :
\opn{Hom}_A ( T , M ) \to
\opn{Hom}_A \bigl( T , \opn{Hom}_A (T , M) \bigr) , \]
where $T := \opn{Tel}(A; \bsym{a})$.
Using Hom-tensor adjunction this can be replaced by 
\[ \opn{Hom}(1_{T} \ot u_{\bsym{a}}, 1_M ) : \opn{Hom}_A ( T , M ) \to
\opn{Hom}_A (T \ot_A T , M) . \]
By Lemma \ref{lem:265} this is a quasi-isomorphism.
\end{proof}

\begin{thm}[MGM Equivalence] \label{thm:26}
Let $A$ be a ring, and let $\a$ be a weakly proregular ideal in it. 
\begin{enumerate}
\item For any $M \in \cat{D}(\cat{Mod} A)$ one has
$\mrm{R} \Gamma_{\a} (M) \in \cat{D}(\cat{Mod} A)_{\a \tup{-tor}}$
and 
$\mrm{L} \Lambda_{\a} (M) \in \cat{D}(\cat{Mod} A)_{\a \tup{-com}}$.

\item The functor 
\[ \mrm{R} \Gamma_{\a} : \cat{D}(\cat{Mod} A)_{\a \tup{-com}} \to 
\cat{D}(\cat{Mod} A)_{\a \tup{-tor}} \]
is an equivalence, with quasi-inverse $\mrm{L} \Lambda_{\a}$.
\end{enumerate}
\end{thm}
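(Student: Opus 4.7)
The plan is to deduce Theorem \ref{thm:26} in a mostly formal manner, by organizing the four idempotency/swap isomorphisms that have already been established in the excerpt: Corollary \ref{cor:11}, Proposition \ref{prop:260}, Lemma \ref{lem:263} and Lemma \ref{lem:264}. At this point the hard analytic work (telescope complex, weak proregularity, vanishing of $\mrm{L}\Lambda_\a$ on acyclic K-flats, etc.) is already done; what remains is bookkeeping with two natural transformations $\tau^{\mrm{L}}$ and $\sigma^{\mrm{R}}$.

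For part (1) I would argue directly. To see that $\mrm{R}\Gamma_\a(M) \in \cat{D}(\cat{Mod} A)_{\a\tup{-tor}}$, I apply Corollary \ref{cor:11}, which says that $\sigma^{\mrm{R}}_{\mrm{R}\Gamma_\a(M)} : \mrm{R}\Gamma_\a(\mrm{R}\Gamma_\a(M)) \to \mrm{R}\Gamma_\a(M)$ is an isomorphism — this is exactly the defining property (Definition \ref{dfn:8}(1)) of being cohomologically $\a$-torsion. Symmetrically, Proposition \ref{prop:260} says $\tau^{\mrm{L}}_{\mrm{L}\Lambda_\a(M)} : \mrm{L}\Lambda_\a(M) \to \mrm{L}\Lambda_\a(\mrm{L}\Lambda_\a(M))$ is an isomorphism, so $\mrm{L}\Lambda_\a(M) \in \cat{D}(\cat{Mod} A)_{\a\tup{-com}}$ by Definition \ref{dfn:2}(1). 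Consequently the restricted functors make sense:
\[
\mrm{R}\Gamma_\a : \cat{D}(\cat{Mod} A)_{\a\tup{-com}} \to \cat{D}(\cat{Mod} A)_{\a\tup{-tor}}, \qquad
\mrm{L}\Lambda_\a : \cat{D}(\cat{Mod} A)_{\a\tup{-tor}} \to \cat{D}(\cat{Mod} A)_{\a\tup{-com}}.
\]

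For part (2), I construct the quasi-inverse isomorphisms from the existing natural transformations. Fix $M \in \cat{D}(\cat{Mod} A)_{\a\tup{-com}}$. By Lemma \ref{lem:263}, the morphism $\mrm{L}\Lambda_\a(\sigma^{\mrm{R}}_M) : \mrm{L}\Lambda_\a(\mrm{R}\Gamma_\a(M)) \to \mrm{L}\Lambda_\a(M)$ is an isomorphism. Since $M$ is cohomologically complete, $\tau^{\mrm{L}}_M : M \to \mrm{L}\Lambda_\a(M)$ is also an isomorphism. Composing, the natural morphism $\bigl(\tau^{\mrm{L}}_M\bigr)^{-1} \circ \mrm{L}\Lambda_\a(\sigma^{\mrm{R}}_M) : \mrm{L}\Lambda_\a(\mrm{R}\Gamma_\a(M)) \iso M$ exhibits $\mrm{L}\Lambda_\a \circ \mrm{R}\Gamma_\a \cong \mrm{id}$ on the subcategory $\cat{D}(\cat{Mod} A)_{\a\tup{-com}}$. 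Entirely symmetrically, for $M \in \cat{D}(\cat{Mod} A)_{\a\tup{-tor}}$, Lemma \ref{lem:264} gives $\mrm{R}\Gamma_\a(\tau^{\mrm{L}}_M) : \mrm{R}\Gamma_\a(M) \iso \mrm{R}\Gamma_\a(\mrm{L}\Lambda_\a(M))$, and the torsion hypothesis gives $\sigma^{\mrm{R}}_M : \mrm{R}\Gamma_\a(M) \iso M$, yielding the inverse natural isomorphism $\mrm{R}\Gamma_\a \circ \mrm{L}\Lambda_\a \cong \mrm{id}$ on $\cat{D}(\cat{Mod} A)_{\a\tup{-tor}}$.

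The functoriality in $M$ of these identifications is automatic since $\tau^{\mrm{L}}$ and $\sigma^{\mrm{R}}$ are natural transformations and the composed arrows are built from them. There is no real obstacle at this stage — essentially all the difficulty has been isolated into Lemmas \ref{lem:263} and \ref{lem:264}, whose proofs in turn rested on the telescope-complex machinery of Section \ref{sec:tel}. If I had to flag a subtle point, it would be making sure the composite isomorphisms I write down really give a natural transformation of functors (not merely a pointwise isomorphism), but this follows because each factor is a component of a natural transformation evaluated at $M$.
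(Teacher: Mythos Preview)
Your proposal is correct and follows essentially the same approach as the paper: part (1) is deduced from the idempotence results Corollary \ref{cor:11} and Proposition \ref{prop:260}, and part (2) combines Lemmas \ref{lem:263} and \ref{lem:264} with the defining isomorphisms $\tau^{\mrm{L}}_M$ and $\sigma^{\mrm{R}}_M$ on the respective subcategories to produce the two natural isomorphisms $\mrm{L}\Lambda_\a \circ \mrm{R}\Gamma_\a \cong \mrm{id}$ and $\mrm{R}\Gamma_\a \circ \mrm{L}\Lambda_\a \cong \mrm{id}$. The paper's proof is simply a terser presentation of the same bookkeeping.
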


\begin{proof}
(1) This is immediate from the idempotence of the functors 
$\mrm{R} \Gamma_{\a}$ and $\mrm{L} \Lambda_{\a}$; see Corollary
\ref{cor:11} and Proposition \ref{prop:260}.

\medskip \noindent
(2) By Lemma \ref{lem:264} and Definition \ref{dfn:8}, there are
functorial isomorphisms 
\[ M \cong \mrm{R} \Gamma_{\a} (M) \cong 
\mrm{R} \Gamma_{\a} (\mrm{L} \Lambda_{\a} (M)) \]
for $M \in \cat{D}(\cat{Mod} A)_{\a \tup{-tor}}$. 
By Lemma \ref{lem:263}  and Definition \ref{dfn:2} there are
functorial isomorphisms 
\[ N \cong \mrm{L} \Lambda_{\a} (N) \cong 
\mrm{L} \Lambda_{\a} (\mrm{R} \Gamma_{\a} (N)) \]
for $N \in \cat{D}(\cat{Mod} A)_{\a \tup{-com}}$. 
These isomorphisms set up the desired equivalence. 
\end{proof}

Here are a couple of related results. 

\begin{thm}[GM Duality] \label{thm:mgm.3}
Let $A$ be a ring, and $\a$ a weakly proregular ideal in $A$. 
For any $M, N \in \cat{D}(\cat{Mod} A)$ the morphisms 
\[ \begin{aligned}
& \opn{RHom}_A \bigl( \mrm{R} \Gamma_{\a} (M), \mrm{R} \Gamma_{\a} (N) \bigr)
\xar{\opn{RHom}(1, \sigma^{\mrm{R}}_N)}
\opn{RHom}_A \bigl( \mrm{R} \Gamma_{\a} (M), N \bigr) 
\\
& \qquad\xar{\opn{RHom}(1, \tau^{\mrm{L}}_N)}
\opn{RHom}_A \bigl( \mrm{R} \Gamma_{\a} (M), \mrm{L} \Lambda_{\a} (N) \bigr) 
\xleftarrow{\opn{RHom}(\sigma^{\mrm{R}}_M, 1)}
\\ & \qquad  \qquad
\opn{RHom}_A \bigl( M, \mrm{L} \Lambda_{\a} (N) \bigr)
\xleftarrow{\opn{RHom}(\tau^{\mrm{L}}_M, 1)}
\opn{RHom}_A \bigl( \mrm{L} \Lambda_{\a} (M), \mrm{L} \Lambda_{\a} (N) \bigr)
\end{aligned} \]
in $\cat{D}(\cat{Mod} A)$ are isomorphisms.
\end{thm}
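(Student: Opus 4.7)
The plan is to reduce the problem to a single ``Hom-tensor'' adjunction supplied by the telescope complex, together with a symmetric pair of orthogonality statements. Fix a weakly proregular generating sequence $\bsym{a}$ for $\a$ and set $T := \opn{Tel}(A;\bsym{a})$. Combining Proposition \ref{prop:251} and Corollary \ref{cor:tel.1} with ordinary Hom-tensor adjunction for the K-flat, bounded complex $T$ yields the fundamental isomorphism
\[ \opn{RHom}_A\bigl(\mrm{R}\Gamma_\a X,\, Y\bigr) \;\cong\; \opn{RHom}_A\bigl(X,\, \mrm{L}\La_\a Y\bigr) \]
for arbitrary $X, Y \in \cat{D}(\cat{Mod} A)$, functorial in both arguments.

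As a preliminary I would establish the following orthogonality statement: if $C \in \cat{D}(\cat{Mod} A)$ satisfies $\mrm{R}\Gamma_\a C = 0$, then Lemma \ref{lem:263} applied with $M := C$ gives $\mrm{L}\La_\a C = 0$ as well, and symmetrically via Lemma \ref{lem:264}. Consequently $\opn{RHom}_A(L, C) = 0$ for every $L \in \cat{D}(\cat{Mod} A)_{\a \tup{-tor}}$, since one can replace $L$ by $\mrm{R}\Gamma_\a L$ and then use the fundamental isomorphism to replace the inner $C$ by $\mrm{L}\La_\a C$; dually $\opn{RHom}_A(C, Y) = 0$ for every $Y \in \cat{D}(\cat{Mod} A)_{\a \tup{-com}}$.

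With these tools each of the four arrows is handled by examining its cone and invoking orthogonality. For the first arrow, the cone of the relevant $\sigma^{\mrm{R}}$-morphism has vanishing $\mrm{R}\Gamma_\a$ by the idempotence established in Corollary \ref{cor:11}, paired with $\mrm{R}\Gamma_\a M \in \cat{D}(\cat{Mod} A)_{\a \tup{-tor}}$ from Theorem \ref{thm:26}(1). For the second arrow the cone of $\tau^{\mrm{L}}_N$ has vanishing $\mrm{R}\Gamma_\a$ by Lemma \ref{lem:264}, and the source is again the torsion object $\mrm{R}\Gamma_\a M$. For the third arrow the cone of $\sigma^{\mrm{R}}_M$ has vanishing $\mrm{R}\Gamma_\a$, and hence also vanishing $\mrm{L}\La_\a$ by the preliminary, while the target $\mrm{L}\La_\a N$ is cohomologically complete by Theorem \ref{thm:26}(1). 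The fourth arrow is dual: Proposition \ref{prop:260} replaces Corollary \ref{cor:11}, showing that the cone of $\tau^{\mrm{L}}_M$ has vanishing $\mrm{L}\La_\a$.

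The main obstacle is essentially bookkeeping rather than conceptual: Corollary \ref{cor:tel.1} identifies $\mrm{L}\La_\a$ with $\opn{Hom}_A(T,-)$ only after a suitable resolution, so in setting up the fundamental isomorphism and shuffling between $\opn{RHom}_A(T \otimes_A X, Y)$ and $\opn{RHom}_A(X, \opn{Hom}_A(T, Y))$ one must track which resolution of which argument is in use and verify that the resulting complexes really do compute the claimed derived functors. Once this is handled, the four verifications are parallel and essentially formal applications of the same template.
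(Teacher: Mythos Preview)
Your argument is correct and uses the same underlying ingredients as the paper (the telescope complex $T$, Proposition~\ref{prop:251}, Corollary~\ref{cor:tel.1}, Lemmas~\ref{lem:263} and~\ref{lem:264}, Corollary~\ref{cor:11}, Proposition~\ref{prop:260}), but the packaging is genuinely different. The paper works entirely at the level of explicit complexes: after choosing resolutions $P \to M$ and $N \to I$, it rewrites the whole five-term diagram concretely in terms of $T$, $P$, $I$ and the map $u = u_{\bsym{a}}$, and then verifies each arrow is a quasi-isomorphism by direct appeal to the homotopy equivalences of Lemma~\ref{lem:265} (for the two middle arrows) together with Lemmas~\ref{lem:263} and~\ref{lem:264} (for the outer two). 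Your route instead isolates the adjunction $\opn{RHom}_A(\mrm{R}\Gamma_\a X, Y) \cong \opn{RHom}_A(X, \mrm{L}\Lambda_\a Y)$ once, derives the orthogonality statements from it, and then handles all four arrows uniformly via a cone argument. What the paper's approach buys is concreteness and independence from Theorem~\ref{thm:26} (it never needs to know that $\mrm{R}\Gamma_\a M$ lies in $\cat{D}(\cat{Mod} A)_{\a\tup{-tor}}$ or that $\mrm{L}\Lambda_\a N$ is cohomologically complete); what your approach buys is a cleaner template that makes the four arrows visibly parallel and would transfer immediately to any setting where the adjunction and idempotence are known. The bookkeeping concern you flag is exactly the point where the two approaches converge: it is handled in the paper by noting that $T \otimes_A P$ is K-projective and $\opn{Hom}_A(T, I)$ is K-injective, and the same observation suffices for your fundamental isomorphism.
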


\begin{proof}
Choose a weakly proregular sequence $\bsym{a}$ that generates $\a$, and write 
$T := \opn{Tel}(A; \bsym{a})$ and $u := u_{\bsym{a}}$.
Next choose a K-projective resolution $P \to M$ and a 
K-injective resolution $N \to I$. 
The complex $T \ot_A P$ is K-projective, and the complex 
$\opn{Hom}_A(T, I)$ is K-injective. 

By Corollary \ref{cor:tel.1} and Proposition \ref{prop:251} we can replace 
the diagram above with the diagram 
\[ \begin{aligned}
& \opn{Hom}_A \bigl( T \ot_A P, T \ot_A I \bigr)
\xar{\opn{Hom}(1, u \ot 1 )}
\opn{Hom}_A \bigl( T \ot_A P, I \bigr) 
\\
& \quad \xar{\opn{Hom}(1, \opn{Hom}( u, 1))}
\opn{Hom}_A \bigl( T \ot_A P, \opn{Hom}_A(T, I) \bigr)
\xleftarrow{\opn{Hom}( u \ot 1, 1)}
\\ & \quad  
\opn{Hom}_A \bigl( P, \opn{Hom}_A(T, I) \bigr)
\xleftarrow{\opn{Hom}(\opn{Hom}(1, u), 1)}
\opn{Hom}_A \bigl( \opn{Hom}_A(T, P), \opn{Hom}_A(T, I) \bigr)
\end{aligned} \]
in $\cat{C}(\cat{Mod} A)$. We will prove that all these morphisms are
quasi-isomorphisms. 

Consider the homomorphism of complexes 
\[ \opn{Hom}(u, 1) : 
T \ot_A P \to  \opn{Hom}_A(T, T \ot_A P) . \]
By Corollary \ref{cor:tel.1}, Proposition \ref{prop:251} and Lemma
\ref{lem:263} this is a quasi-isomorphism. 
Therefore, by Hom-tensor adjunction and the fact that $I$ is K-injective, we
see that $\opn{Hom}(1, u \ot 1)$ is a quasi-isomorphism.

By Lemma \ref{lem:265} and Hom-tensor adjunction it follows that 
$\opn{Hom}(1, \opn{Hom}( u, 1))$ and 
$\opn{Hom}(u \ot 1, 1)$ are quasi-isomorphisms.

Finally consider the homomorphism of complexes 
\[ 1 \ot \opn{Hom}(u, 1) : 
T \ot_A P \to T \ot_A \opn{Hom}_A(T, P) . \]
By Corollary \ref{cor:tel.1}, Proposition \ref{prop:251} and Lemma
\ref{lem:264} this is a quasi-isomorphism. 
Therefore, by Hom-tensor adjunction and the fact that $I$ is K-injective, we
see that $\opn{Hom}(\opn{Hom}(1, u), 1)$ is a quasi-isomorphism.
\end{proof}

\begin{cor}  \label{cor:280}
There is a functorial isomorphism
\[ \rho^{\mrm{LR}}_N : \opn{RHom}_A( \mrm{R} \Gamma_{\a} (A), N)
\iso \mrm{L} \Lambda_{\a} (N) \]
for $N \in \cat{D}(\cat{Mod} A)$, such that 
$\rho^{\mrm{LR}}_N \circ \opn{RHom}( \sigma^{\mrm{R}}_A, 1_N) = 
\tau^{\mrm{L}}_N$
as morphisms $N \to \mrm{L} \Lambda_{\a} (N)$.
\end{cor}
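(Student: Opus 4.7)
The plan is to derive this corollary immediately from Theorem \ref{thm:mgm.3} (GM Duality) by specializing $M := A$. Under this substitution the outer terms of the five-term chain involving $\mrm{L} \Lambda_{\a}(M)$ collapse, because $\opn{RHom}_A(A, -)$ is canonically isomorphic to the identity functor on $\cat{D}(\cat{Mod} A)$.

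Concretely, setting $M := A$ in Theorem \ref{thm:mgm.3} produces the two isomorphisms
\[
\opn{RHom}_A \bigl( \mrm{R} \Gamma_{\a}(A), N \bigr)
\xar{\opn{RHom}(1, \tau^{\mrm{L}}_N)}
\opn{RHom}_A \bigl( \mrm{R} \Gamma_{\a}(A), \mrm{L} \Lambda_{\a}(N) \bigr)
\xleftarrow{\opn{RHom}(\sigma^{\mrm{R}}_A, 1)}
\opn{RHom}_A \bigl( A, \mrm{L} \Lambda_{\a}(N) \bigr)
\]
in $\cat{D}(\cat{Mod} A)$. Composing the first with the inverse of the second, and then with the canonical identification $\opn{RHom}_A(A, \mrm{L}\Lambda_{\a}(N)) \cong \mrm{L}\Lambda_{\a}(N)$, yields the desired morphism $\rho^{\mrm{LR}}_N$. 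Functoriality in $N$ is inherited from the functoriality built into Theorem \ref{thm:mgm.3}.

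For the compatibility identity $\rho^{\mrm{LR}}_N \circ \opn{RHom}(\sigma^{\mrm{R}}_A, 1_N) = \tau^{\mrm{L}}_N$, the only tool needed is the bifunctoriality of $\opn{RHom}_A(-,-)$. Writing $f := \sigma^{\mrm{R}}_A$ and $g := \tau^{\mrm{L}}_N$, one has
\[
\opn{RHom}(1, g) \circ \opn{RHom}(f, 1_N) \; = \; \opn{RHom}(f, g) \; = \; \opn{RHom}(f, 1) \circ \opn{RHom}(1_A, g) ,
\]
so when the first composite is post-composed with $\opn{RHom}(f, 1)^{-1}$ (which is the definition of $\rho^{\mrm{LR}}_N$ applied to the source), the result equals $\opn{RHom}(1_A, g)$. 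Under the identification $\opn{RHom}_A(A, -) = \opn{id}$, this is exactly $\tau^{\mrm{L}}_N$.

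There is no real obstacle here, since all the substantial work is already contained in Theorem \ref{thm:mgm.3}; the only thing to be careful about is to orient the two isomorphisms in the correct direction (one arrow pointing rightward, one leftward) when defining $\rho^{\mrm{LR}}_N$, so that both the target $\mrm{L}\Lambda_{\a}(N)$ and the compatibility condition come out correctly.
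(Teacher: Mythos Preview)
Your proposal is correct and follows exactly the paper's approach: the paper's proof is the single sentence ``Take $M := A$ in Theorem \ref{thm:mgm.3}.'' You have simply spelled out which two arrows of the five-term chain are used and verified the compatibility identity via bifunctoriality of $\opn{RHom}$, details the paper leaves implicit.
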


\begin{proof}
Take $M := A$ in Theorem \ref{thm:mgm.3}. 
\end{proof}

\begin{rem} \label{rem:historical}
Here is a brief historical survey of the material in this paper.
GM Duality for derived categories was
introduced in \cite{AJL1}.
Precursors, in ``classical'' homological algebra,
were in the papers \cite{Ma1}, \cite{Ma2} and \cite{GM}. 

The construction of the total left derived completion functor $\mrm{L}
\Lambda_{\a}$ was first done in \cite{AJL1}. Recall that \cite{AJL1} dealt with
sheaves on a  scheme $X$, where K-projective resolutions are not available, and
certain operations work only for quasi-coherent $\mcal{O}_X$-modules. 
Hence there are some technical difficulties that do not arise when working
with rings. 

The derived torsion functor goes back to work of
Grothendieck in the late 1950's (see \cite{LC} and \cite[Chapter IV]{RD}). The
use of the infinite dual Koszul complex to prove that the
functor $\mrm{R} \Gamma_{\a}$ has finite cohomological dimension already appears
in \cite{AJL1}. 

The concept of ``telescope'' comes from algebraic topology, as a device to form
the homotopy colimit in triangulated categories. This is how it was treated in 
\cite{GM}. Its purpose there was the same as in our proof of 
Theorem \ref{thm:mgm.3}. We give a concrete treatment of the telescope complex,
resulting in our Theorem \ref{thm:250}. 

GM Duality (Theorem \ref{thm:mgm.3}) was already proved in \cite{AJL1}.
Perhaps because of the complications inherent to the geometric setup, the proofs
in \cite{AJL1} are not quite transparent. Moreover, there was a subtle mistake
in \cite{AJL1} involving the concept of proregularity, that was
discovered by Schenzel (see \cite[Correction]{AJL1} and \cite{Sc}).
On the other hand, the results in the later paper \cite{Sc} are not as
strong as those in \cite{AJL1}, and this is quite confusing. See
Remark \ref{rem:280} for details. One of our aims in this paper is to clarify
the foundations of the theory in the algebraic setting.

MGM Equivalence (Theorem \ref{thm:26}) is present, in essence, already in
\cite{AJL2} and \cite{Sc}. See a discussion of the various
statements and proofs in Remark \ref{rem:280}. 

Theorem \ref{thm:26} is similar to \cite[Theorem 2.1]{DG}; but the
relationship is not clear. In \cite{DG} the authors seem to {\em define} the
derived completion and torsion functors to be 
$\opn{Hom}_A(T, M)$ and $T \ot_A M$ respectively, where 
$\bsym{a}$ is a finite sequence and 
$T := \opn{Tel}(A; \bsym{a})$. There is no apparent comparison in \cite{DG} of
these functors to the derived functors $\mrm{L} \La_{\a}(M)$ and 
$\mrm{R} \Ga_{\a}(M)$
associated to the ideal $\a$ generated by $\bsym{a}$
(something like Proposition \ref{prop:251} and Corollary \ref{cor:tel.1}).
There is also no assumption that $A$ is noetherian, nor any
mention of weak proregularity of $\bsym{a}$. The same reservations 
pertain also to \cite{DGI}. 
\end{rem}

\section{Derived Localization} \label{sec:der-loc}

In this final section we give an alternative characterization of
cohomologically complete complexes (Theorem \ref{thm:263}). This result 
is inspired by the paper \cite{KS2}. See Remark \ref{rem:derloc.1} for a
comparison. 

We make this assumption throughout the section:  
$\bsym{a} = (a_1, \ldots, a_n)$ is a weakly proregular sequence in the ring $A$,
and $\a$ is the ideal generated by $\bsym{a}$.
We do not assume that $A$ is noetherian or $\a$-adically complete.

There is an additive functor 
\[ \Gamma_{0 / \a} : \cat{Mod} A \to \cat{Mod} A \ , \ 
\Gamma_{0 / \a} (M) := M / \Gamma_{\a} (M) \ . \]
The functor $\Gamma_{0 / \a}$ has a right derived functor 
$\mrm{R} \Gamma_{0 / \a}$, constructed using K-injective resolutions. 
 
\begin{lem} \label{lem:267}
For $M \in \cat{D}(\cat{Mod} A)$ there is a distinguished triangle
\[ \mrm{R} \Gamma_{\a} (M) \xar{\sigma^{\tup{R}}_M} M \to 
\mrm{R} \Gamma_{0 / \a} (M) \distri \ , \]
in  $\cat{D}(\cat{Mod} A)$, functorial in $M$.
\end{lem}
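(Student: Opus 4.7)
The plan is to realize the triangle as coming from a short exact sequence of complexes obtained by applying a short exact sequence of functors to a K-injective resolution. By the very definition of $\Gamma_{0/\a}$, for every $A$-module $N$ there is a natural short exact sequence
\[ 0 \to \Gamma_{\a}(N) \xar{\sigma_N} N \to \Gamma_{0/\a}(N) \to 0 \]
in $\cat{Mod} A$. This is a short exact sequence of additive functors, so applying it componentwise to any complex preserves exactness.

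First, by Proposition \ref{prop:28}(3), choose a K-injective resolution $\phi : M \to I$ with $\phi$ a quasi-isomorphism. Applying the sequence above to each $I^i$ yields a short exact sequence of complexes
\[ 0 \to \Gamma_{\a}(I) \xar{\sigma_I} I \to \Gamma_{0/\a}(I) \to 0 , \]
which by the standard mechanism (the mapping cone of $\sigma_I$ is quasi-isomorphic to $\Gamma_{0/\a}(I)$) gives a distinguished triangle
\[ \Gamma_{\a}(I) \xar{\sigma_I} I \to \Gamma_{0/\a}(I) \distri \]
in $\cat{D}(\cat{Mod} A)$. Since $I$ is K-injective, both $\mrm{R}\Ga_{\a}$ and $\mrm{R}\Ga_{0/\a}$ are computed on $I$ by simply applying the underived functors; thus we may identify $\Ga_{\a}(I) \cong \mrm{R}\Ga_{\a}(M)$ and $\Ga_{0/\a}(I) \cong \mrm{R}\Ga_{0/\a}(M)$, and the middle term with $M$ via $\phi$. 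Under these identifications the left arrow becomes $\sigma^{\mrm{R}}_M$, by the very definition of $\sigma^{\mrm{R}}_M$ given in Section \ref{sec:comp}.

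The remaining point is functoriality in $M$. Given a morphism $\alpha : M \to M'$ in $\cat{D}(\cat{Mod} A)$, represent it by a homomorphism of complexes $\til{\alpha} : I \to I'$ between chosen K-injective resolutions (unique up to homotopy). Then $\til{\alpha}$ restricts to $\Ga_{\a}(\til{\alpha})$ and descends to $\Ga_{0/\a}(\til{\alpha})$, producing a morphism of short exact sequences of complexes, and hence a morphism of distinguished triangles in $\cat{D}(\cat{Mod} A)$. The homotopy ambiguity is harmless in the derived category, so the assignment $M \mapsto $ (the above triangle) is functorial.

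There is no substantial obstacle; the only mildly delicate step is the identification of $\Ga_{0/\a}(I)$ with $\mrm{R}\Ga_{0/\a}(M)$, which is immediate from the fact that right derived functors are computed on K-injective resolutions.
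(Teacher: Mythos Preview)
Your proof is correct and follows essentially the same approach as the paper: choose a K-injective resolution $M \to I$, apply the short exact sequence $0 \to \Gamma_{\a}(I) \to I \to \Gamma_{0/\a}(I) \to 0$, and identify the resulting distinguished triangle with the derived one via the K-injective property. Your added discussion of functoriality is a welcome elaboration that the paper leaves implicit.
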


\begin{proof}
Take any K-injective resolution $M \to I$. Consider the exact sequence 
\begin{equation*} \label{eqn:6}
 0 \to \Gamma_{\a} (I) \xar{\sigma^{}_I} I \to \Gamma_{0 / \a} (I) \to 0 
\end{equation*}
in $\cat{C}(\cat{Mod} A)$.
This gives rise to  a distinguished triangle
$\Gamma_{\a} (I) \xar{\sigma_I} I \to \Gamma_{0 / \a} (I) \distri$
in $\cat{D}(\cat{Mod} A)$, using the cone construction.
But the diagram 
$\Gamma_{\a} (I) \xar{\sigma_I} I$
is isomorphic in $\cat{D}(\cat{Mod} A)$ to the diagram 
$\mrm{R} \Gamma_{\a} (M) \xar{\sigma^{\tup{R}}_M} M$, and 
$\Gamma_{0 / \a} (I) \cong \mrm{R} \Gamma_{0 / \a} (M)$.
\end{proof}

\begin{lem} \label{lem:270}
The following conditions are equivalent
for  $M \in \cat{D}(\cat{Mod} A)$\tup{:}
\begin{enumerate}
\rmitem{i} $M$ is cohomologically $\a$-adically complete.
\rmitem{ii} $M$ is right perpendicular to $\mrm{R} \Gamma_{0 / \a} (A)$\tup{;}
namely
$\opn{RHom}_A \bigl( \mrm{R} \Gamma_{0 / \a} (A), M \bigr) = 0$.
\end{enumerate}
\end{lem}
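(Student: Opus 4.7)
The plan is to obtain the desired equivalence by applying $\opn{RHom}_A(-, M)$ to the distinguished triangle of Lemma \ref{lem:267} for the complex $A$ itself, and then identifying two of the three terms via GM Duality.

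First I would take the functorial distinguished triangle
\[ \mrm{R} \Gamma_{\a} (A) \xar{\sigma^{\tup{R}}_A} A \to
\mrm{R} \Gamma_{0 / \a} (A) \distri \]
provided by Lemma \ref{lem:267}, and apply the triangulated functor $\opn{RHom}_A(-, M)$ to it. Since $\opn{RHom}_A(A, M) \cong M$, this yields a distinguished triangle
\[ \opn{RHom}_A \bigl( \mrm{R} \Gamma_{0 / \a} (A), M \bigr) \to M
\xar{\opn{RHom}(\sigma^{\tup{R}}_A, 1_M)}
\opn{RHom}_A \bigl( \mrm{R} \Gamma_{\a} (A), M \bigr) \distri \]
in $\cat{D}(\cat{Mod} A)$, functorial in $M$.

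Next I would invoke Corollary \ref{cor:280}, which provides a functorial isomorphism $\rho^{\mrm{LR}}_M : \opn{RHom}_A(\mrm{R} \Gamma_{\a} (A), M) \iso \mrm{L} \Lambda_{\a} (M)$ with the compatibility $\rho^{\mrm{LR}}_M \circ \opn{RHom}(\sigma^{\mrm{R}}_A, 1_M) = \tau^{\mrm{L}}_M$. Substituting this into the triangle above identifies the second arrow with $\tau^{\mrm{L}}_M : M \to \mrm{L} \Lambda_{\a}(M)$, so the triangle becomes
\[ \opn{RHom}_A \bigl( \mrm{R} \Gamma_{0 / \a} (A), M \bigr) \to M
\xar{\tau^{\mrm{L}}_M} \mrm{L} \Lambda_{\a} (M) \distri \]

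Finally, the conclusion is immediate from the general fact that in a distinguished triangle $L \to M \xar{\al} N \distri$, the morphism $\al$ is an isomorphism if and only if $L = 0$. Applied to our triangle, $\tau^{\mrm{L}}_M$ is an isomorphism (i.e.\ $M$ is cohomologically $\a$-adically complete) if and only if $\opn{RHom}_A(\mrm{R} \Gamma_{0 / \a} (A), M) = 0$. No genuine obstacle is anticipated; the entire argument is a formal consequence of the triangle of Lemma \ref{lem:267} and the GM duality identification already packaged in Corollary \ref{cor:280}.
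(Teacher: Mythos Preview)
Your proof is correct and is essentially identical to the paper's own argument: both apply $\opn{RHom}_A(-, M)$ to the triangle of Lemma \ref{lem:267} for $A$, then use Corollary \ref{cor:280} to identify the resulting arrow with $\tau^{\mrm{L}}_M$, and read off the equivalence from the triangle.
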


\begin{proof}
Start with the distinguished triangle
\[ \mrm{R} \Gamma_{\a} (A) \xar{\sigma^{\tup{R}}_A} A \to 
\mrm{R} \Gamma_{0 / \a} (A) \distri \]
in $\cat{D}(\cat{Mod} A)$ that we have by Lemma \ref{lem:267}.
Now apply the functor $\opn{RHom}_A(- , M)$ to it. This gives a 
distinguished triangle
\[ \opn{RHom}_A \bigl( \mrm{R} \Gamma_{0 / \a} (A) , M \bigr) \to M 
\xar{(\sigma^{\tup{R}}_A, 1_M)}
\opn{RHom}_A \bigl( \mrm{R} \Gamma_{\a} (A) , M \bigr) \distri . \]
According to Corollary \ref{cor:280}
we can replace this triangle by the isomorphic distinguished triangle 
\begin{equation} \label{eqn:derloc.1}
\opn{RHom}_A \bigl( \mrm{R} \Gamma_{0 / \a} (A) , M \bigr) \to M 
\xar{\tau^{\tup{L}}_M} \mrm{L} \Lambda_{\a} (M)  \distri .
\end{equation}
The equivalence of the two conditions is now clear.
\end{proof}

Let $X := \opn{Spec }A$;
$Z := \opn{Spec} A / \a$, the closed subset of $X$ defined by the ideal $\a$;
$U := X - Z$, an open set in $X$; and 
$U_i :=  \opn{Spec} A[a_i^{-1}]$, the affine open set 
$\{ a_i \neq 0 \}$ of $X$. 
The collection $\bsym{U} := \{ U_i \}_{i = 1, \ldots, n}$ is
an affine open covering of $U$. 

Let $\opn{C}(\bsym{U}, \mcal{O}_X)$ be the \v{C}ech cosimplicial algebra
corresponding to this open covering. So 
\[ \opn{C}(\bsym{U}, \mcal{O}_X)^p = 
\prod_{1 \leq i_0 \leq \cdots \cdots \leq i_p \leq n} \, 
\Ga( U_{i_0} \cap \cdots \cap U_{i_p}, \mcal{O}_X) . \]
Note that 
\[ \Ga( U_{i_0} \cap \cdots \cap U_{i_p}, \mcal{O}_X) \cong 
A[(a_{i_0} \cdots a_{i_p})^{-1}] \]
as $A$-algebras. 

Any cosimplicial algebra $C$ has the standard normalization $\opn{N}(C)$, which
is a DG algebra. In degree $p$ the abelian group 
$\opn{N}(C)^p$ is the kernel of all the codegeneracy operators. 
The multiplication is by the Alexander-Whitney formula (which is usually
noncommutative!), and the differential is the alternating sum of the coboundary
operators. For full details see any book on simplicial methods; or 
\cite[Section 1]{HY}.

\begin{dfn}
Let $\opn{C}(A; \bsym{a}) := \opn{N}(\opn{C}(\bsym{U}, \mcal{O}_X))$, the
standard normalization of the cosimplicial algebra 
$\opn{C}(\bsym{U}, \mcal{O}_X)$. The DG $A$-algebra $\opn{C}(A; \bsym{a})$ is
called the {\em derived localization} of $A$ with respect to the sequence of
elements $\bsym{a}$.
\end{dfn}

For the convenience of the reader we shall now give explicit formulas for the
structure of the DG algebra $\opn{C}(A; \bsym{a})$. 
For any $q \in \{ 0, \ldots, n - 1 \}$ consider the set of strictly increasing
sequences
$\bsym{k} = (k_0, \ldots, k_q)$
in $\{ 1, \ldots, n \}^{q + 1}$.
Define
\[ \opn{C}^{\bsym{k}}(A; \bsym{a}) := 
A[ (a_{k_0} \cdots a_{k_q})^{-1} ] . \]
This is a commutative $A$-algebra, isomorphic to 
\[ A[a_{k_0}^{-1}]  \otimes_A \cdots \otimes_A A[a_{k_q}^{-1}] . \]
If $\bsym{k}$ is a subsequence of $\bsym{l}$ then there is a canonical
$A$-algebra homomorphism
\[ \phi^{\bsym{k}, \bsym{l}} :
\opn{C}^{\bsym{k}}(A; \bsym{a}) \to \opn{C}^{\bsym{l}}(A; \bsym{a}) . \]
We define 
\begin{equation} \label{eqn:7}
\opn{C}^q(A; \bsym{a}) := \prod_{\bsym{k}} \, \opn{C}^{\bsym{k}}(A; \bsym{a}) 
\end{equation}
where $\bsym{k} = (k_0, \ldots, k_q)$ is strictly increasing.
The differential
\[ \d : \opn{C}^q(A; \bsym{a}) \to \opn{C}^{q+1}(A; \bsym{a}) \]
has components
\[ \d^{\bsym{k}, \bsym{l}} : \opn{C}^{\bsym{k}}(A; \bsym{a})
\to \opn{C}^{\bsym{l}}(A; \bsym{a}) \]
for $\bsym{l} = (l_0, \ldots, l_{q+1})$, with
\[ \d^{\bsym{k}, \bsym{l}} :=
\begin{cases}
(-1)^j \phi^{\bsym{k}, \bsym{l}} & \text{ if } \bsym{k} \text{ is gotten from }
\bsym{l} \text{ by deleting } l_j \\
0 & \text{ otherwise } . 
\end{cases} \]

Here is the explicit description of the Alexander-Whitney multiplication on
$\opn{C}(A; \bsym{a})$. For strictly increasing multi-indices
$\bsym{k} = (k_0, \ldots, k_p)$ and 
$\bsym{l} = (l_0, \ldots, l_{q})$, the multiplication
\[ * : \opn{C}^{\bsym{k}}(A; \bsym{a}) \times 
\opn{C}^{\bsym{l}}(A; \bsym{a}) \to \opn{C}^{p + q}(A; \bsym{a}) \]
is this: if $k_p = l_0$ then let
\[ \bsym{k} \smallsmile \bsym{l} := (k_0, \ldots, k_p, 
l_1, \ldots, l_{q}) . \]
There are $A$-algebra homomorphisms
\[ \phi^{\bsym{k}, \bsym{k} \smallsmile \bsym{l}} : 
\opn{C}^{\bsym{k}}(A; \bsym{a}) \to 
\opn{C}^{\bsym{k} \smallsmile \bsym{l}}(A; \bsym{a}) \]
and
\[ \phi^{\bsym{l}, \bsym{k} \smallsmile \bsym{l}} : 
\opn{C}^{\bsym{l}}(A; \bsym{a}) \to 
\opn{C}^{\bsym{k} \smallsmile \bsym{l}}(A; \bsym{a}) . \]
For elements
$a \in \opn{C}^{\bsym{k}}(A; \bsym{a})$ and 
$b \in \opn{C}^{\bsym{l}}(A; \bsym{a})$
we let
\[ a * b := \phi^{\bsym{k}, \bsym{k} \smallsmile \bsym{l}}(a) \cdot 
\phi^{\bsym{l}, \bsym{k} \smallsmile \bsym{l}}(b) 
\in \opn{C}^{\bsym{k} \smallsmile \bsym{l}}(A; \bsym{a}) . \]
If $k_p \neq l_0$ then the multiplication $*$ is zero.

Let us denote by $f_{\bsym{a}} : A \to \opn{C}^0(A; \bsym{a})$
the canonical ring homomorphism. It is easy to check that this becomes a
homomorphism of DG algebras 
\begin{equation}
f_{\bsym{a}} : A \to \opn{C}(A; \bsym{a}) .
\end{equation}

Observe that $\opn{C}(A; \bsym{a})$ is concentrated in
degrees $0, \ldots, n-1$; and each 
$\opn{C}(A; \bsym{a})^p$
is a flat $A$-module. 
If $n = 1$ then $\opn{C}(A; \bsym{a}) = A[a_1^{-1}]$; in particular
in this case $\opn{C}(A; \bsym{a})$ is commutative. However if $n \geq 2$ 
then the center of $\opn{C}(A; \bsym{a})$ is the ring 
$\Ga(U, \mcal{O}_X)$, which is many cases (e.g.\ $A := K[t_1, t_2]$, a
polynomial algebra over a field, and $\bsym{a} := (t_1, t_2)$)
equals $A$. 

\begin{lem} \label{lem:derloc.1} 
\begin{enumerate}
\item There is an isomorphism 
$\opn{K}^{\vee}_{\infty}(A; \bsym{a})[1] \cong \opn{cone}(f_{\bsym{a}})$
in \lb $\cat{C}(\cat{Mod} A)$. The corresponding distinguished
triangle in $\cat{K}(\cat{Mod} A)$ is 
\[ \opn{K}^{\vee}_{\infty}(A; \bsym{a}) 
\xar{e^{\vee}_{\bsym{a}, \infty}} A
\xar{f_{\bsym{a}}} \opn{C}(A; \bsym{a}) \distri . \] 

\item The homomorphisms 
\[ 1_{\opn{C}} \otimes f_{\bsym{a}}, \, f_{\bsym{a}} \otimes 1_{\opn{C}} :
\opn{C}(A; \bsym{a}) \to \opn{C}(A; \bsym{a}) \otimes_A \opn{C}(A; \bsym{a})
\]
are quasi-isomorphisms.
\end{enumerate}
\end{lem}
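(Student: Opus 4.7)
For part (1), my plan is to construct the isomorphism $\opn{K}^{\vee}_{\infty}(A; \bsym{a})[1] \cong \opn{cone}(f_{\bsym{a}})$ directly on the level of graded modules and then verify that the differentials agree. By equation (\ref{eqn:210}), $\opn{K}^{\vee}_{\infty}(A; \bsym{a})^p$ decomposes as $\bigoplus_{S \subseteq \{1,\ldots,n\},\, |S|=p} A[\textstyle\prod_{i \in S} a_i^{-1}]$, so $\opn{K}^{\vee}_{\infty}(A; \bsym{a})[1]$ has $A$ in degree $-1$ and $\bigoplus_{|S|=p+1} A[\prod_{i \in S} a_i^{-1}]$ in degree $p \geq 0$. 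This matches summand-by-summand with $\opn{cone}(f_{\bsym{a}})$, which has $A$ in degree $-1$ and $\opn{C}(A; \bsym{a})^p = \bigoplus_{i_0 < \cdots < i_p} A[(a_{i_0} \cdots a_{i_p})^{-1}]$ in degree $p \geq 0$, indexed naturally by subsets $\{i_0, \ldots, i_p\}$. The nontrivial components of the differential in both complexes are, up to an overall sign convention, the canonical inclusions of localizations from the summand labelled by $S$ into the summand labelled by $S \cup \{j\}$ for $j \notin S$; the Koszul sign rule and the \v{C}ech alternating-sum both produce the sign $(-1)^{|S \cap \{1,\ldots,j-1\}|}$, and the degree $-1 \to 0$ piece of the cone differential is $f_{\bsym{a}}$ by construction. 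Rotating the tautological triangle $A \xrightarrow{f_{\bsym{a}}} \opn{C}(A; \bsym{a}) \to \opn{cone}(f_{\bsym{a}}) \distri$ and identifying the shifted connecting map $\opn{cone}(f_{\bsym{a}})[-1] \cong \opn{K}^{\vee}_{\infty}(A; \bsym{a}) \to A$ with the augmentation $e^{\vee}_{\bsym{a}, \infty}$ (both being the projection onto the $S = \varnothing$ summand) then yields the triangle claimed.

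For part (2), by the symmetric roles of the two homomorphisms it suffices to treat $f_{\bsym{a}} \otimes 1_{\opn{C}}$. Since each component of $\opn{C}(A; \bsym{a})$ is a flat $A$-module and the complex is bounded, the functor $- \otimes_A \opn{C}(A; \bsym{a})$ is exact and preserves distinguished triangles; applying it to the triangle from part (1) yields
\[ \opn{K}^{\vee}_{\infty}(A; \bsym{a}) \otimes_A \opn{C}(A; \bsym{a}) \to \opn{C}(A; \bsym{a}) \xrightarrow{f_{\bsym{a}} \otimes 1_{\opn{C}}} \opn{C}(A; \bsym{a}) \otimes_A \opn{C}(A; \bsym{a}) \distri , \]
so the assertion reduces to proving that the leftmost complex is acyclic.

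The key observation for this acyclicity is that $\opn{K}^{\vee}_{\infty}(A; \bsym{a}) \otimes_A M$ is acyclic whenever some $a_j$ acts invertibly on the $A$-module $M$: in that case the $j$-th tensor factor $\opn{K}^{\vee}_{\infty}(A; a_j) \otimes_A M$ is $(M \xrightarrow{\mrm{id}} M)$ concentrated in degrees $0,1$, hence acyclic, while the remaining Koszul factors are bounded complexes of flat $A$-modules and therefore K-flat, so the total tensor product remains acyclic. Each summand of each $\opn{C}(A; \bsym{a})^q$ is precisely a localization inverting at least one $a_j$; viewing $\opn{K}^{\vee}_{\infty}(A; \bsym{a}) \otimes_A \opn{C}(A; \bsym{a})$ as a bounded double complex filtered by \v{C}ech degree, every row is therefore acyclic, and the acyclic assembly lemma for bounded double complexes yields that the totalization is acyclic, as required.

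The only genuine obstacle I foresee is the sign bookkeeping in part (1): one must verify that the Koszul sign rule for the iterated tensor product and the standard \v{C}ech alternating-sum differential produce matching signs after the summands are identified with subsets of $\{1, \ldots, n\}$. This is routine but should be carried out explicitly. Once part (1) is in hand, part (2) is a clean consequence of the K-flatness of the Koszul factors together with a standard bounded double-complex argument.
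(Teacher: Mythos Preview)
Your argument for part (1) is exactly the ``direct calculation, quite easy'' that the paper declines to write out; the indexing by subsets $S \subseteq \{1,\ldots,n\}$ and the identification of the degree $-1$ component of the cone with the augmentation are precisely the points one has to check, and your caveat about signs is appropriate.

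For part (2) your reduction step (tensor the triangle from (1) with $\opn{C}(A;\bsym{a})$ and show that $\opn{K}^{\vee}_{\infty}(A;\bsym{a}) \otimes_A \opn{C}(A;\bsym{a})$ is acyclic) coincides with the paper's, but you then finish differently. The paper simply invokes Lemma~\ref{lem:260}: tensoring the triangle with the K-flat complex $\opn{K}^{\vee}_{\infty}(A;\bsym{a})$ gives $\opn{K}^{\vee}_{\infty} \otimes_A \opn{K}^{\vee}_{\infty} \to \opn{K}^{\vee}_{\infty} \to \opn{K}^{\vee}_{\infty} \otimes_A \opn{C}$, and since the first arrow is a quasi-isomorphism by that lemma, the third vertex is acyclic. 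You instead prove directly that $\opn{K}^{\vee}_{\infty}(A;\bsym{a}) \otimes_A M$ is acyclic whenever some $a_j$ acts invertibly on $M$, and then run a bounded double-complex argument over the \v{C}ech degree. This is correct and self-contained; it is essentially the same mechanism that appears in the proof of Theorem~\ref{thm:253} (the case $\a \not\subset \p$), and in effect it re-proves the acyclicity content of Lemma~\ref{lem:260} in the special situation needed here. The paper's route is shorter because Lemma~\ref{lem:260} is already available, while yours has the virtue of making the geometric reason (each \v{C}ech summand lives over an open set where one of the $a_j$ is inverted) completely explicit.
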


\begin{proof}
(1) This is a direct calculation, quite easy. 

\medskip \noindent
(2) Since the complexes in the distinguished triangle in part (1) are all K-flat
over $A$, the assertion follows from Lemma \ref{lem:260}.
\end{proof}

\begin{thm} \label{thm:263}
Let $\bsym{a} = (a_1, \ldots, a_n)$ be a weakly proregular sequence in the ring
$A$, and let $\a$ be the ideal generated by $\bsym{a}$.
The following conditions are equivalent
for $M \in \cat{D}(\cat{Mod} A)$\tup{:}
\begin{enumerate}
\rmitem{i} $M$ is  cohomologically $\a$-adically complete.
\rmitem{ii} $\opn{RHom}_A \bigl( \opn{C}(A; \bsym{a}) , M \bigr) = 0$.
\end{enumerate}
\end{thm}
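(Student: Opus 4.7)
The plan is to reduce Theorem \ref{thm:263} to Lemma \ref{lem:270} by identifying $\opn{C}(A; \bsym{a})$ with $\mrm{R} \Gamma_{0/\a}(A)$ in the derived category. Lemma \ref{lem:270} already provides the characterization of cohomological completeness as right-perpendicularity to $\mrm{R} \Gamma_{0/\a}(A)$, so once this identification is in hand the theorem follows immediately by applying $\opn{RHom}_A(-, M)$.

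First I would assemble the two relevant distinguished triangles over $A$. By Corollary \ref{cor:kosz.1} applied to $M = A$, there is a functorial isomorphism $v^{\mrm{R}}_{\bsym{a}, A} : \mrm{R} \Gamma_{\a}(A) \iso \opn{K}^{\vee}_{\infty}(A; \bsym{a})$, and under this isomorphism the canonical morphism $\sigma^{\mrm{R}}_A$ corresponds to $e^{\vee}_{\bsym{a}, \infty}$. On the other hand, Lemma \ref{lem:267} (for $M = A$) produces the triangle
\[ \mrm{R} \Gamma_{\a}(A) \xar{\sigma^{\mrm{R}}_A} A \to \mrm{R} \Gamma_{0/\a}(A) \distri, \]
while Lemma \ref{lem:derloc.1}(1) provides the triangle
\[ \opn{K}^{\vee}_{\infty}(A; \bsym{a}) \xar{e^{\vee}_{\bsym{a}, \infty}} A \xar{f_{\bsym{a}}} \opn{C}(A; \bsym{a}) \distri. \]
Since the first two terms of these triangles, together with the morphism between them, are identified via $v^{\mrm{R}}_{\bsym{a}, A}$ and $1_A$, the octahedral/triangle completion axiom (TR3) furnishes an isomorphism $\opn{C}(A; \bsym{a}) \iso \mrm{R} \Gamma_{0/\a}(A)$ in $\cat{D}(\cat{Mod} A)$ making the evident square commute.

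Given this isomorphism, applying the triangulated functor $\opn{RHom}_A(-, M)$ yields a natural isomorphism
\[ \opn{RHom}_A\bigl(\opn{C}(A; \bsym{a}), M\bigr) \cong \opn{RHom}_A\bigl(\mrm{R} \Gamma_{0/\a}(A), M\bigr) \]
in $\cat{D}(\cat{Mod} A)$. In particular, one vanishes if and only if the other does. By Lemma \ref{lem:270}, the vanishing of the right-hand side is equivalent to condition (i) that $M$ is cohomologically $\a$-adically complete, while the vanishing of the left-hand side is exactly condition (ii). This establishes the equivalence.

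The only mildly delicate point is the existence of the isomorphism of mapping cones at the start; but this is not really an obstacle, since it is just the standard fact that a morphism of pairs $(X \to Y) \to (X' \to Y')$ extends to a morphism of triangles, and any such extension becomes an isomorphism once the first two maps are isomorphisms (the third-arrow uniqueness is not even needed here, only existence). So the main work has already been done in Lemmas \ref{lem:267}, \ref{lem:270}, \ref{lem:derloc.1} and Corollary \ref{cor:kosz.1}; Theorem \ref{thm:263} is essentially a formal corollary.
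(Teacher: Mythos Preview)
Your proof is correct and follows essentially the same route as the paper: identify $\opn{C}(A;\bsym{a})$ with $\mrm{R}\Gamma_{0/\a}(A)$ by comparing the triangles of Lemma~\ref{lem:derloc.1}(1) and Lemma~\ref{lem:267} via Corollary~\ref{cor:kosz.1}, then invoke Lemma~\ref{lem:270}. The paper states this in one sentence, but your expanded version is exactly what is intended.
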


\begin{proof}
{}From Lemma \ref{lem:derloc.1}(1), Lemma
\ref{lem:267} and Corollary \ref{cor:kosz.1} (applied to $M := A$)
we see that there is an isomorphism
$\mrm{R} \Gamma_{0 / \a} (A) \cong \opn{C}(A; \bsym{a})$
in $\cat{D}(\cat{Mod} A)$. 
Now combine this with Lemma  \ref{lem:270}.
\end{proof}

Let $F : \cat{D} \to \cat{D}'$ be an additive functor between additive
categories. Recall that the {\em essential
image} of $F$ is the full subcategory of $\cat{D}'$ on the objects 
$N' \in \cat{D}'$ such that $N' \cong F (N)$ for some $N \in \cat{D}$. 
The {\em kernel} of $F$ is the full subcategory of $\cat{D}$ on the objects 
$N \in \cat{D}$ such that $F (N) \cong 0$. 

\begin{prop} \label{prop:derloc.1}
When $\a$ is a weakly proregular ideal in $A$, the kernel of the
functor $\mrm{L} \Lambda_{\a}$ equals the kernel of the
functor $\mrm{R} \Gamma_{\a}$. 
\end{prop}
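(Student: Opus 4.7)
The plan is to deduce both inclusions directly from the two ``crossed'' isomorphism lemmas that were already established, namely Lemma \ref{lem:263} and Lemma \ref{lem:264}. These state, respectively, that $\mrm{L}\Lambda_{\a}(\sigma^{\mrm{R}}_M) : \mrm{L}\Lambda_{\a}(\mrm{R}\Gamma_{\a}(M)) \to \mrm{L}\Lambda_{\a}(M)$ and $\mrm{R}\Gamma_{\a}(\tau^{\mrm{L}}_M) : \mrm{R}\Gamma_{\a}(M) \to \mrm{R}\Gamma_{\a}(\mrm{L}\Lambda_{\a}(M))$ are isomorphisms for every $M \in \cat{D}(\cat{Mod} A)$.

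First, suppose $M$ lies in the kernel of $\mrm{R}\Gamma_{\a}$, i.e.\ $\mrm{R}\Gamma_{\a}(M) \cong 0$. Applying $\mrm{L}\Lambda_{\a}$ gives $\mrm{L}\Lambda_{\a}(\mrm{R}\Gamma_{\a}(M)) \cong 0$, and then Lemma \ref{lem:263} identifies this with $\mrm{L}\Lambda_{\a}(M)$, so $M$ lies in the kernel of $\mrm{L}\Lambda_{\a}$. Conversely, if $\mrm{L}\Lambda_{\a}(M) \cong 0$, then applying $\mrm{R}\Gamma_{\a}$ and invoking Lemma \ref{lem:264} shows $\mrm{R}\Gamma_{\a}(M) \cong \mrm{R}\Gamma_{\a}(\mrm{L}\Lambda_{\a}(M)) \cong 0$, so $M$ is in the kernel of $\mrm{R}\Gamma_{\a}$.

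There is no real obstacle here: the work was already done in Lemmas \ref{lem:263} and \ref{lem:264}, which together express the fact that on each side of the MGM equivalence the two derived functors ``see the same zero objects.'' The only thing to verify is that one writes the argument symmetrically and appeals to the correct lemma in each direction.
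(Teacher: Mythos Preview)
Your argument is correct. The paper's proof simply cites the MGM Equivalence (Theorem \ref{thm:26}) as an immediate consequence, whereas you go one level deeper and invoke the two lemmas (Lemmas \ref{lem:263} and \ref{lem:264}) that underlie that theorem. Since the relevant content of Theorem \ref{thm:26} for this purpose is precisely those two isomorphisms, your route is essentially the same as the paper's, just unpackaged; if anything it is slightly more economical, since you do not need the idempotence statements (Corollary \ref{cor:11} and Proposition \ref{prop:260}) that also feed into the full MGM Equivalence.
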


\begin{proof}
This is an immediate consequence of the MGM Equivalence (Theorem \ref{thm:26}).
\end{proof}

For a DG algebra $C$ we denote by $\cat{DGMod} C$ the category of left DG
$C$-modules, and by $\til{\cat{D}}(\cat{DGMod} C)$ the derived category, gotten
by inverting the quasi-iso\-morph\-isms in $\cat{DGMod} C$. 

\begin{thm} \label{thm:derloc.1}
Let $\bsym{a} = (a_1, \ldots, a_n)$ be a weakly proregular sequence in the ring
$A$, and let $\a$ be the ideal generated by $\bsym{a}$. Consider the
triangulated functor 
\[ F : \til{\cat{D}} (\cat{DGMod} \opn{C}(A; \bsym{a})) \to 
\cat{D}(\cat{Mod} A) \]
induced by the DG algebra homomorphism 
$A \to \opn{C}(A; \bsym{a})$. 
\begin{enumerate}
\item The functor $F$ is full and faithful.
\item The essential image of $F$ equals the kernel of the functor 
$\mrm{L} \Lambda_{\a}$. 
\end{enumerate}
\end{thm}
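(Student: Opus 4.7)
The plan is to reduce both parts of the theorem to a single fact: the multiplication map $\mu : C \ot_A C \to C$ is a quasi-isomorphism of right DG $C$-modules, where I write $C := \opn{C}(A; \bsym{a})$ and $f := f_{\bsym{a}} : A \to C$. For part (1), the restriction functor $F$ has a left adjoint $G := C \ot_A^{\mrm{L}} -$, and since $C$ is K-flat over $A$ (a bounded complex of $A$-flat modules), $G(M) = C \ot_A M$ for any complex $M$. The functor $F$ is then fully faithful iff the counit $GF \to \opn{id}$ is an isomorphism, and at $N \in \cat{DGMod}\,C$ this counit is precisely the action map $a_N : C \ot_A N \to N$.

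The main calculation is that $a_N$ is a quasi-isomorphism. By Lemma \ref{lem:derloc.1}(2), the map $f \ot 1_C : C \to C \ot_A C$ is a quasi-iso, and since $\mu \circ (f \ot 1_C) = 1_C$, also $\mu$ is a quasi-iso. Both $C$ and $C \ot_A C$ are K-flat as right DG $C$-modules: the first trivially, the second because the right $C$-action goes through the right tensor factor while the left tensor factor $C$ is K-flat over $A$. Since K-flatness is preserved under mapping cones, $\opn{cone}(\mu)$ is K-flat and acyclic as a right DG $C$-module, so tensoring it over $C$ with any left DG $C$-module $N$ yields an acyclic complex. Hence $\mu \ot_C 1_N$ is a quasi-iso, and under the canonical identifications $(C \ot_A C) \ot_C N \cong C \ot_A N$ and $C \ot_C N \cong N$, this map is exactly $a_N$. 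This finishes part (1).

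For part (2), Proposition \ref{prop:derloc.1} gives $\ker(\mrm{L}\Lambda_{\a}) = \ker(\mrm{R}\Gamma_{\a})$. Tensoring the distinguished triangle of Lemma \ref{lem:derloc.1}(1) with $M$ over $A$ and applying Corollary \ref{cor:kosz.1} yields
\[ \mrm{R}\Gamma_{\a}(M) \to M \xar{f \ot 1_M} C \ot_A M \distri , \]
so $M \in \ker(\mrm{R}\Gamma_{\a})$ iff $f \ot 1_M$ is an isomorphism in $\cat{D}(\cat{Mod}\,A)$. For $N \in \cat{DGMod}\,C$, the action $a_{F(N)}$ is a left inverse to $f \ot 1_{F(N)}$, so part (1) forces $f \ot 1_{F(N)}$ to be an isomorphism, whence $F(N)$ lies in $\ker(\mrm{R}\Gamma_{\a})$. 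Conversely, given $M \in \ker(\mrm{R}\Gamma_{\a})$, the DG $C$-module $N := C \ot_A M$ (with left $C$-action on the first factor) satisfies $F(N) = C \ot_A M \cong M$ via $(f \ot 1_M)^{-1}$. The main obstacle is the K-flatness bookkeeping for $C \ot_A C$ as a right DG $C$-module together with the preservation of K-flatness under mapping cones; once this formal point is granted, the entire theorem collapses to the quasi-isomorphism statement of Lemma \ref{lem:derloc.1}(2).
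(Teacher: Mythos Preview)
Your proof is correct and follows essentially the same route as the paper's: both reduce part (1) to the idempotence statement of Lemma \ref{lem:derloc.1}(2), and both handle part (2) via the distinguished triangle of Lemma \ref{lem:derloc.1}(1), Corollary \ref{cor:kosz.1}, and Proposition \ref{prop:derloc.1}. You supply more detail than the paper does---in particular the K-flatness bookkeeping for $C \otimes_A C$ as a right $C$-module and its mapping cone, which the paper simply elides---and you correctly identify $G = C \otimes_A -$ as the \emph{left} adjoint of $F$ (the paper calls it the right adjoint, which appears to be a slip), but the substance is the same.
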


\begin{proof}
(1) Let's write $C := \opn{C}(A; \bsym{a})$,
$\cat{D}(C) := \til{\cat{D}} (\cat{DGMod} C)$
and
$\cat{D}(A) := \cat{D}(\cat{Mod} A)$.
Take any $N \in \cat{DGMod} C$. Lemma \ref{lem:derloc.1}(2) implies that 
$f_{\bsym{a}} \otimes 1_N : N \to C \otimes_A N$
is a quasi-isomorphism. This shows that the functor 
$G : \cat{D}(A) \to \cat{D}(C)$, 
$G (M) := C \otimes_A M$, is right adjoint to $F$, and it satisfies
$G \circ F \cong \bsym{1}_{\cat{D}(C)}$. Hence $F$ is fully faithful. 

\medskip \noindent
(2) Let's write $K := \opn{K}^{\vee}_{\infty}(A; \bsym{a})$.
Take any $M \in \cat{D}(A)$.
In view of the idempotence of $C$ (namely Lemma \ref{lem:derloc.1}(2)),
Proposition \ref{prop:derloc.1}, Corollary \ref{cor:kosz.1} and the
proof of part (1) above, it is enough to show that 
$K \otimes_A M \cong 0$
iff $M \cong C \otimes_A M$. 
Now after applying $- \otimes_A M$ to the distinguished triangle in
Lemma \ref{lem:derloc.1}(1) we obtain a distinguished triangle
\[ K \otimes_A M \to M \to C \otimes_A M  \distri \] 
in $\cat{D}(A)$. So the conditions are indeed equivalent.
\end{proof}

\begin{rem}
One can show that 
$\cat{D}(A)_{\a \tup{-tor}}$ is a Bousfield localization of $\cat{D}(A)$
in the sense of \cite[Chapter 9]{Ne}. Here
we use the notation from the proof above. Therefore, using Proposition
\ref{prop:derloc.1} and Theorem \ref{thm:derloc.1}, we see that there is an
exact sequence of triangulated categories
\[ 0 \to \cat{D}(C) \xar{F} \cat{D}(A) \xar{\mrm{R} \Gamma_{\a}} 
\cat{D}(A)_{\a \tup{-tor}} \to  0 . \]
This was already observed in \cite[Remark 0.4]{AJL1} and \cite{DG}.

Moreover, Jorgensen, in \cite[Theorem 3.3]{Jo}, showed that when we take into
account the functors 
\[ \opn{inc}, \mrm{L} \Lambda_{\a} : \cat{D}(A)_{\a \tup{-tor}} \to 
\cat{D}(A) \]
and
\[ \opn{RHom}_A(C, -) , \,  C \ot_{A} - : \cat{D}(A) \to \cat{D}(C) \]
we obtain a recollement of triangulated categories.
\end{rem}

\begin{rem}
The scheme $U = X - Z$ is quasi-affine. We denote by
$\cat{QCoh} \mcal{O}_U$ the category of quasi-coherent $\mcal{O}_U$-modules. 
It can be shown that there is a canonical $A$-linear equivalence of
triangulated categories
\[ \cat{D}(\cat{QCoh} \mcal{O}_U) \approx
\til{\cat{D}}(\cat{DGMod} \opn{C}(A; \bsym{a})) . \]
Of course in the principal case ($n = 1$) this is a trivial fact. 
In terms of derived Morita theory, the equivalence
above corresponds to the fact that $\mcal{O}_U$ is a compact generator of 
$\cat{D}(\cat{QCoh} \mcal{O}_U)$. 
\end{rem}

\begin{rem} \label{rem:derloc.1}
In the paper \cite{KS2} the authors consider the special case where 
$\a$ is a principal ideal of $A$, generated by a regular element (i.e.\ a 
non-zero-divisor) $a$. Here the derived localization $\opn{C}(A; a)$ is just
the  commutative ring $A[a^{-1}]$, and the notation of \cite{KS2} for this
algebra is $A^{\mrm{loc}}$. Theorems \ref{thm:263}  and  \ref{thm:derloc.1} for
this case are closely related to \cite[Corollaries 1.5.7 and 1.5.9]{KS2}. 
The {\em Cohomological Nakayama Theorem} in \cite{PSY2} is inspired by results
in \cite{KS2}.
\end{rem}


\end{document}